\newcommand{\showcomments}{yes}
\newsavebox{\commentbox}
\newcounter{intronum}
\newcounter{ax}
\newtheorem{thm}{Theorem}[section]
\newtheorem{lem}[thm]{Lemma}
\newtheorem{cor}[thm]{Corollary}
\newtheorem{prop}[thm]{Proposition}
\newtheorem{propi}[intronum]{Proposition}
\theoremstyle{definition}
\newtheorem{defn}[thm]{Definition}
\newtheorem{rem}[thm]{Remark}
\newtheorem{question}[thm]{Question}
\newtheorem{claim*}{Claim}
\DeclareMathOperator{\dimension}{dim}
\DeclareMathOperator{\Aut}{Aut}
\DeclareMathOperator{\stabilizer}{Stab}
\DeclareMathOperator{\diam}{diam}
\newcommand{\neb}{\mathcal N}
\newcommand{\field}[1]{\mathbb{#1}}
\newcommand{\integers}{\ensuremath{\field{Z}}}
\newcommand{\naturals}{\ensuremath{\field{N}}}
\newcommand{\reals}{\ensuremath{\field{R}}}
\newcommand{\Euclidean}{\ensuremath{\field{E}}}
\newcommand{\boundary}{{\ensuremath \partial}}
\newcommand{\Rmnum}[1]{\mathbf{{\expandafter\@slowromancap\romannumeral #1@}}}
\newcommand{\contact}[1]{\ensuremath{\mathcal C#1}}
\let\oldmarginpar\marginpar
\renewcommand\marginpar[1]{\-\oldmarginpar[\raggedleft\footnotesize #1]%
{\raggedright\footnotesize #1}}
\newcounter{enumitemp}
\newcommand{\dist}{\textup{\textsf{d}}}
\newcommand{\OL}{\overleftarrow}
\newcommand{\OR}{\overrightarrow}
\newcommand{\gate}{\mathfrak g}
\begin{document}
\title{Large facing tuples and a strengthened sector lemma}
\author[M Hagen]{Mark Hagen}
\address{School of Mathematics, University of Bristol, Bristol, UK}
\email{markfhagen@posteo.net}

\setcounter{tocdepth}{1}
\keywords{CAT(0) cube complex, contact graph, cubical sector, Tits alternative, Ramsey's theorem, 
Dilworth's theorem}
\date{\today}
\maketitle

\begin{abstract}
We prove a strengthened sector lemma for irreducible, finite-dimensional, locally finite, essential, cocompact 
CAT(0) cube 
complexes under the additional hypothesis that the complex is \emph{hyperplane-essential}; we prove that every 
quarterspace contains a halfspace.  In aid of this, we present simplified proofs of known results about 
loxodromic isometries of the contact graph, avoiding the use of disc diagrams.

This paper has an expository element; in particular, we collect results about cube complexes 
proved by combining Ramsey's theorem and Dilworth's theorem.  We illustrate the use of these tricks with a 
discussion of the Tits alternative for cubical groups, and ask some questions about ``quantifying'' statements 
related to rank-rigidity and the Tits alternative.
\end{abstract}

\tableofcontents

\section{Introduction}\label{sec:intro}
In various guises, CAT(0) cube complexes appear throughout mathematics.  They appear in discrete mathematics 
as \emph{median graphs} (the equivalence of median graphs and CAT(0) cube complexes is due to 
Chepoi~\cite{Chepoi}) and many other equivalent combinatorial structures: discrete median 
algebras~\cite{avann1961median,Roller}, event structures~\cite{nielsen1981petri,barthelemy1993median}, etc. 
(see Bandelt-Chepoi~\cite{bandelt2008metric} for a survey).  After being introduced into group theory by Gromov 
as a source of examples~\cite{Gromov:essay}, CAT(0) cube complexes were understood by Sageev to provide the 
correct generalisation of trees needed to formulate a ``high-dimensional Bass-Serre 
theory''~\cite{Sageev95,sageev1997codimension}.  

The theory of groups acting on CAT(0) cube complexes has since proved 
extremely useful.  Nonpositively-curved cube complexes provide the setting for Wise's cubical 
small-cancellation theory~\cite{Wise:pave}, and the sub-class of \emph{special} cube complexes defined by 
Haglund-Wise~\cite{haglund2008special} provides a class of groups with many separable subgroups.  These ideas 
were crucial to the resolution of several conjectures about $3$--manifolds, notably the virtual Haken and 
virtual fibering conjectures~\cite{agol2013virtual}.

CAT(0) cube complexes are extremely organised spaces in which one has many tools far beyond CAT(0) geometry, 
largely because of the median structure, the \emph{hyperplanes}, and the \emph{(combinatorially) convex 
subcomplexes}.  This has strong coarse-geometric consequences, e.g. finite asymptotic 
dimension~\cite{wright2012finite} and various results related to quasi-isometric rigidity, 
e.g.~\cite{huang2017top,huang2018groups}.  The nice geometric features of CAT(0) cube complexes have also led 
to the study of non-cubical spaces that can be ``approximated'' by cube complexes in one way or another, as in 
coarse median spaces~\cite{bowditch2013coarse,bowditch2018convex,bowditch2019quasiflats} and hierarchically 
hyperbolic spaces~\cite{behrstock2017quasiflats}.  

The purpose of this paper is threefold.  First, we prove a statement, Proposition~\ref{propi:SSL}, 
about hyperplane-essential actions on CAT(0) cube complexes, needed elsewhere in the literature.  
In~\cite[Lemma 5.2]{CapraceSageev}, Caprace and Sageev show that, 
given an $\Aut(X)$--\emph{essential}, irreducible cube complex $X$ on which $\Aut(X)$ acts without a global 
fixed point 
at infinity, and given crossing hyperplanes $v,h$, there exist disjoint hyperplanes $a,b$ such that 
$a$ and $b$ are separated by both $v$ and $h$.  This is crucial for their proof of rank-rigidity for 
CAT(0) cube complexes.

Simple examples also show that their lemma is sharp.  Under a stronger hypothesis, 
\emph{hyperplane-essentiality}, we get more (albeit \emph{using} rank-rigidity):

\begin{propi}\label{propi:SSL}
 Let $X$ be an irreducible, locally finite, essential, hyperplane-essential CAT(0) cube 
complex such 
that $\Aut(X)$ acts cocompactly.  Let $h,v$ be distinct hyperplanes, let $h^+,v^+$ be halfspaces associated to 
$h,v$ 
respectively, and suppose that $h\cap v\neq\emptyset$.  Then $h^+\cap v^+$ contains a hyperplane, and 
therefore contains a halfspace.
\end{propi}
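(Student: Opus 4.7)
The strategy is to combine the Caprace-Sageev Lemma 5.2 --- which already encodes rank-rigidity --- with iterated applications of hyperplane-essentiality, together with a Ramsey/Dilworth-type pigeonhole, to produce a hyperplane deep inside the quarterspace $h^+\cap v^+$.

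Applying Caprace-Sageev Lemma 5.2 to the crossing pair $(h,v)$ yields disjoint hyperplanes $a,b$ each separated from the other by both $h$ and $v$. If either $a$ or $b$ already lies in $h^+\cap v^+$, we are done. Otherwise, up to relabeling, $a\subset h^-\cap v^+$ and $b\subset h^+\cap v^-$, and handling this ``off-diagonal'' case is the heart of the proof.

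To treat the off-diagonal case, I descend into $h$ via hyperplane-essentiality: $\stabilizer(h)$ acts essentially and cocompactly on $h$, and $h\cap v$ is a hyperplane of $h$. Essentiality then produces hyperplanes of $h$, of the form $k\cap h$ for hyperplanes $k$ of $X$ crossing $h$, arbitrarily deep in the halfspace $h\cap v^+$ of $h$ and disjoint from $h\cap v$. The main obstacle is to arrange that the ambient hyperplane $k$ itself misses $v$, so that $k\subset v^+$; this is not automatic, since $k$ could cross $v$ somewhere away from $h$. Here I would extract an infinite sequence of such $k_n$'s with $k_n\cap h$ marching to infinity in $h\cap v^+$, and apply a Ramsey/Dilworth-type pigeonhole (of the kind emphasized in the paper's introduction), combined with hyperplane-essentiality of $v$, to force an infinite subfamily of $k_n$'s disjoint from $v$. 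Repeating the same descent inside such a $k$ --- using hyperplane-essentiality of $k$ to find a hyperplane $k'$ of $X$ crossing $k$ with $k'\cap k$ deep in the halfspace $k\cap h^+$ of $k$ and, by the same pigeonhole, disjoint from both $h$ and $v$ --- produces a hyperplane $k'\subset h^+\cap v^+$. The ``contains a halfspace'' conclusion is then immediate: since $k'$ is disjoint from both $h$ and $v$, the halfspace bounded by $k'$ not containing $h\cup v$ lies entirely inside $h^+\cap v^+$.
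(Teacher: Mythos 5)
Your proposal takes a genuinely different route from the paper, but there is a gap in the second descent.

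First, a simplification: your first descent does not need a Ramsey/Dilworth argument at all.  If $k$ crosses $h$ and $k\cap h$ is \emph{disjoint} from $v\cap h$ in $h$ (not merely deep), then $k$ is automatically disjoint from $v$ in $X$, by the Helly property.  Indeed, if $k,h,v$ pairwise crossed, they would meet in a common cube $c$, and then $k\cap h$ and $v\cap h$ would both pass through the cube $c\cap h$ of $h$, hence cross in $h$.  So the ``main obstacle'' you identify in the first descent is not actually an obstacle.

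The genuine gap is in the second descent.  You now have $k$ crossing $h$ with $k$ disjoint from $v$ and $k\subset v^+$, and you want $k'$ crossing $k$, with $k'\cap k$ in the halfspace $k\cap h^+$ and disjoint from $k\cap h$ (so $k'$ is disjoint from $h$ by the Helly argument above), \emph{and also} $k'$ disjoint from $v$.  Here the Helly argument is unavailable: $v$ does not cross $k$, so there is no hyperplane $v\cap k$ of $k$ against which to compare $k'\cap k$.  The hyperplanes of $k$ of the form $w\cap k$ with $w$ crossing both $k$ and $v$ are exactly the hyperplanes of $k$ crossing $\gate_k(v)$, and this convex subcomplex crosses $k\cap h$ (since $h$ crosses both $k$ and $v$) and could be unbounded in the $k\cap h^+$ direction.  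Nothing in your proposal controls $\gate_k(v)$: ``marching $k'\cap k$ to infinity in $k\cap h^+$'' does not guarantee it eventually leaves $\gate_k(v)$, and it is not clear how a Ramsey/Dilworth pigeonhole or hyperplane-essentiality of $v$ would force an infinite subfamily to miss $v$.  In fact, for this to work you would essentially need $k$ and $v$ to be far apart in $\contact X$, which your construction does not arrange.

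The paper's proof is designed precisely to avoid this issue.  Instead of building outward from hyperplanes crossing $h$, it uses rank-rigidity (via Corollary~\ref{cor:rank_rigidity}) to find a hyperplane $a$ with $\dist_{\contact X}(a,h)>2$.  Such an $a$ is automatically disjoint from both $h$ and $v$ (since $\dist_{\contact X}(h,v)=1$), so it lies in one of the four quarterspaces from the start, and the condition $\dist_{\contact X}(a,h)>2$ is stable under translation by $\stabilizer(h)$.  Lemma~\ref{lem:sector_hop} then uses hyperplane-essentiality of the $\stabilizer(h)$--action on $h$ (via Caprace--Sageev Proposition~3.2) to translate $a$ across $v$ while staying on the same side of $h$; the gate $\gate_h(a)$ is a single point by Lemma~\ref{lem:single_point}, which is what makes this translation trick work.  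Corollary~\ref{cor:strong_separation} then produces a hyperplane on the other side of $h$, and one more application of Lemma~\ref{lem:sector_hop} populates the remaining quarterspace.  The contact-graph distance hypothesis is doing real work throughout, and your proposal, which never invokes $\contact X$, has no substitute for it.
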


A similar statement appears in~\cite{NevoSageev}. The exact statement 
is~\cite[Proposition~2.11]{BeyrerFioravanti:marked}.  In the latter, the proof is attributed to 
still-in-progress work of the present author and Wilton~\cite{HagenWilton}.  So 
(with our collaborator's blessing), we extracted the proposition and its proof from~\cite{HagenWilton} so 
that an account of Proposition~\ref{propi:SSL} is readily available.  The proof is in 
Section~\ref{sec:strengthened_sector_lemma}.

This seemed important to do because the results of~\cite{BeyrerFioravanti:marked} are significant and use 
the above proposition.  In~\cite{BeyrerFioravanti:marked}, conditions are given under which a cubulation of a 
group $G$ is determined up to equivariant cubical isomorphism by function that assigns to each element of $G$ 
its $\ell_1$ translation length.  In~\cite{BeyrerFioravanti:marked}, the restrictions on the cube complex 
include the 
hypothesis that it has no free faces, and the same result holds when $G$ is hyperbolic under the weaker 
hyperplane-essentiality hypothesis~\cite{BeyrerFioravanti:hyperbolic}.

An essential action on a CAT(0) cube complex is the higher-dimensional version of a minimal action on 
a simplicial tree.  Hyperplane-essentiality requires, in addition, that hyperplane-stabilisers act essentially 
on their hyperplanes.  It always holds in the $1$--dimensional case --- hyperplanes in trees are 
points.  A simple example is that the action of $\integers$ on the tiling of $\reals$ by 
$1$--cubes is hyperplane-essential, but the action of $\integers$ on the CAT(0) cube complex obtained by 
gluing squares corner-to-corner in the obvious way is not.  See Figure~\ref{fig:weird_cubulation}

\begin{figure}[h]
\includegraphics[width=0.5\textwidth]{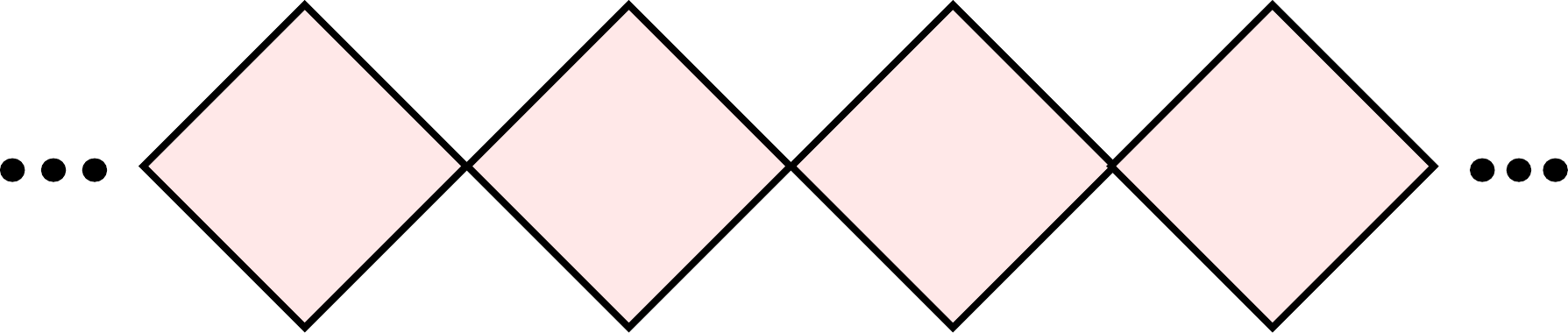}
\caption{The $\integers$--action on this $2$--dimensional CAT(0) cube complex by translations is essential but not hyperplane-essential.  Indeed, the hyperplanes are nontrivial CAT(0) cube 
complexes --- line segments --- with trivial stabilisers, so the action is not hyperplane-essential.  But every halfspace contains points in any fixed $\integers$--orbit arbitrarily far from 
its bounding hyperplane, so the action is essential.}\label{fig:weird_cubulation}
\end{figure}

The combination of essentiality and 
hyperplane-essentiality of a cocompact CAT(0) cube complex $X$ can be viewed as a weak version of $X$ having 
no free faces.  The no free-faces property implies essentiality of hyperplanes of all codimensions. 

Hyperplane-essentiality can often be arranged by modifying the cube complex~\cite{HagenTouikan}, so it is a 
natural hypothesis to impose.  Many motivating examples of actions on CAT(0) cube complexes, like the 
cubulations of hyperbolic $3$--manifold groups constructed by Bergeron-Wise using work of 
Kahn-Markovic~\cite{BergeronWise,KahnMarkovic}, are hyperplane-essential~\cite{BeyrerFioravanti:hyperbolic}.  
In addition to its necessity for length spectrum rigidity, hyperplane-essentiality has recently proved 
useful in other contexts, e.g.~\cite{FioravantiHagen}.  Proposition~\ref{propi:SSL} further illustrates the 
utility of the notion.

The proof of Proposition~\ref{propi:SSL} in this paper relies (in a fairly soft way) on understanding which 
isometries of a locally finite CAT(0) cube complex $X$ act loxodromically on the \emph{contact 
graph}, the $1$--skeleton of the nerve of the covering of $X$ by hyperplane carriers.  Such isometries were 
characterised in~\cite{Hagen:boundary}: a hyperbolic isometry $g$ fails to be loxodromic on the contact 
graph when $g$ is either not rank-one, or some axis of $g$ fellow-travels a hyperplane.  The proof 
in~\cite{Hagen:boundary} relies on \emph{disc diagrams} in CAT(0) cube complexes, which were introduced by 
Casson in unpublished notes, and developed in work of Sageev~\cite{Sageev95} and 
Wise~\cite{Wise:pave}.  

This brings us to the second purpose of this paper, which is expository.  The study of 
rank-one/contracting isometries of CAT(0) spaces/cube complexes is a subject of current interest, see 
e.g.~\cite{CharneySultan,Qing,CordesHume}.  So, in this paper we give a simpler proof of 
the preceding characterisation of contact graph loxodromics, avoiding the use of disc diagrams.  This is 
Theorem~\ref{thm:loxo}.  

One of the ingredients in the proof of Theorem~\ref{thm:loxo} is a description of the convex hull 
of a combinatorial geodesic axis of an isometry (Lemma~\ref{lem:rank_one}).  Convex hulls of geodesics are 
examples of cube complexes in which no three hyperplanes \emph{face}, i.e. for any three disjoint 
hyperplanes, one of them separates the other two.  A naive question is: given a finite set of 
hyperplanes with no such ``facing triple'', is there a geodesic that crosses all hyperplanes in the set?  
Conversely, given a finite set of hyperplanes that cross a ball of a fixed size, can we find large subsets in 
which any three hyperplanes face?

The answer to the first question is no --- the set of hyperplanes in the CAT(0) cube complex formed by arranging five squares cyclically around a common vertex (so that the 
cube complex is homeomorphic to a disc) is a counterexample.  However, when the dimension of the cube 
complex is bounded, there is a geodesic that crosses \emph{a definite proportion} of the hyperplanes; this is 
Corollary~\ref{cor:geodesic} below.  A quantitative version of the second question also has a 
positive answer; see Corollary~\ref{cor:linear_growth}, which says that, if $X$ is a $D$--dimensional CAT(0) 
cube complex that does not isometrically embed in the standard tiling of $\Euclidean^L$ by $L$--cubes for any 
$L$, then for all $N$, there exists $R$ such that the set of hyperplanes intersecting an $R$--ball in $X$ 
contains a facing $N$--tuple of hyperplanes.  

These two results are proved by combining Ramsey's theorem and Dilworth's 
theorem.  The idea to apply Dilworth's theorem to a collection of halfspaces appears throughout the 
literature (see e.g.~\cite{AOS,BCGNW,Fioravanti:dilworth}).  The idea to apply Ramsey's theorem to a 
collection of hyperplanes appears in~\cite{CapraceSageev,ChepoiChalopin} and likely elsewhere.  We are not 
aware of a 
reference where 
the two are applied in concert in this way, so decided to make matters explicit here.    

The main statement on the above topic is Proposition~\ref{prop:ramsey_dilworth}, which says that if $X$ is a 
$D$--dimensional 
CAT(0) cube complex, and $N\in\naturals$, and $\mathcal W$ is a finite set of hyperplanes with no 
$(N+1)$--tuple of facing hyperplanes, then there is a subset of $\mathcal W$ of size at least $|\mathcal W|/K$ 
such that we can choose one halfspace for each hyperplane in the subset in such a way that the associated 
halfspaces are totally ordered by inclusion.  Here, $K$ is a constant depending on $D$ and $N$ (which can be 
made explicit using Ramsey numbers).

The motivation for Corollary~\ref{cor:geodesic} is a question from Abdul Zalloum; the statement seems 
to be useful in current work of Murray-Qing-Zalloum on \emph{sublinearly contracting boundaries}.  The 
purpose of Corollary~\ref{cor:linear_growth} is to illustrate the idea 
of Proposition~\ref{prop:ramsey_dilworth}, which we do by giving a simple proof of the Tits alternative 
for cubulated groups, in Proposition~\ref{prop:tits_1}.

The proof of Proposition~\ref{prop:tits_1} is different from that of the more general statement due to 
Sageev-Wise~\cite{SageevWise:tits}, and more closely resembles the proof in~\cite{CapraceSageev}.  In both 
cases, the main point is to find a facing $4$--tuple of hyperplanes, divide this into two pairs, and apply the 
Double Skewering Lemma to find two hyperbolic isometries which, by ping-pong, generate a free group.

What we find intriguing is that a facing $4$--tuple, if it exists (i.e. if the group in question is not 
virtually abelian), must be seen in a ball in the cube complex of quantifiable radius, because we found it 
using Corollary~\ref{cor:linear_growth}.  In Section~\ref{sec:questions}, we pose some questions 
aimed at effectivising various statements 
about actions on cube complexes.  This is the 
third purpose of this paper.

\textbf{Acknowledgments.}
I thank Elia Fioravanti for asking me about Proposition~\ref{propi:SSL} and for several helpful comments.  I 
thank Henry Wilton for permission to extract 
Proposition~\ref{propi:SSL} 
from~\cite{HagenWilton}.  I thank Talia Fern\`os, Yulan Qing, and Abdul Zalloum for discussions about 
loxodromic directions in the contact graph.  I thank Abdul Zalloum for discussions about facing tuples and 
comments on this paper. 
I thank Radhika Gupta, 
Thomas Ng, Kasia Jankiewicz, Yulan Qing, 
and Abdul Zalloum for a discussion about effective versions of rank rigidity and the Tits alternative during 
the University of Toronto \emph{Hyperbolic Lunch} in April 2020.  I am grateful to the referee for numerous helpful comments.  This work was partly supported 
by EPSRC 
New Investigator Award  EP/R042187/1.

\section{Preliminaries}\label{sec:defn}
There are several introductions to CAT(0) cube complexes, emphasising 
different features; see e.g.~\cite{Chepoi,Haglund:graph_product,Roller,Sageev:book,Wise:book}.  
We follow~\cite[Section 2]{FioravantiHagen}.

\begin{defn}[CAT(0) cube complex]\label{defn:cat0cc}
 A \emph{cube} is a Euclidean unit cube $[-\frac12,\frac12]^n$ for some $n\geq 0$, and a \emph{face} of a cube 
$c$ is a subcomplex obtained by restricting some of the coordinates to $\pm\frac12$.  A \emph{midcube} of $c$ 
is a subspace obtained by restricting exactly one coordinate to $0$.

A \emph{CAT(0) cube complex} is a simply connected CW complex $X$ whose cells are cubes, where the attaching 
maps restrict to isometries on faces, and the following holds.  For each $0$--cube $v\in X$, and each 
collection $e_1,\ldots,e_k$ of $1$--cubes incident to $v$, if for all $i\ne j$ the $1$--cubes $e_i,e_j$ span a 
$2$--cube, then $e_1,\ldots,e_k$ span a unique $k$--cube.  The \emph{dimension} $\dimension X$ is the supremum of the 
dimensions of the cubes of $X$.
\end{defn}

We use the terms ``vertex''/``$0$--cube'', and the terms ``edge''/``$1$--cube'', interchangeably when talking 
about cube complexes.

By~\cite{Gromov:essay,Bridson:thesis,Leary}, a CAT(0) cube complex $X$ supports a CAT(0) 
metric $\dist_2$ in which each (Euclidean) cube is convex.  We will instead work mainly with the path metric 
$\dist$ obtained by equipping each cube with the $\ell_1$ metric; 
see e.g~\cite{Miesch:injective}.  In fact, we mostly work with the restriction of $\dist$ to $X^{(0)}$, 
which is isometric to the metric obtained by restricting to $X^{(0)}$ the usual graph metric on $X^{(1)}$.
% 
% The following is well-known; see for example~\cite[Lemma 2.2]{CapraceSageev}:
% 
% \begin{lem}\label{lem:QI}
% The inclusion $(X^{(0)},\dist)\to(X,\dist_2)$ is an $\Aut(X)$--equivariant quasi-isometry when $\dimension 
% X<\infty$, and the quasi-isometry constants depend only on $\dimension X$.  Moreover, for all $x,y\in 
% X^{(0)}$, $\dist_2(x,y)\leq\dist(x,y)$.
% \end{lem}

We need the following language about paths in $X$.  A \emph{CAT(0) geodesic} is a geodesic for the 
metric $\dist_2$.  Given $L\in\integers_{\ge0}$, a \emph{combinatorial path} $\gamma:[0,L]\to X$ is a 
continuous map sending $[0,L]\cap\integers$ to $X^{(0)}$ and sending each $[i,i+1]$ isometrically to a 
$1$--cube.  The combinatorial path $\gamma$ is a \emph{combinatorial geodesic} if 
$|i-j|=\dist(\gamma(i),\gamma(j))$ for $0\leq i\leq j\leq L$.

\subsection{Hyperplanes and halfspaces}\label{sec:hyperplanes}
The key features of CAT(0) cube complexes are their \emph{hyperplanes} and \emph{halfspaces}.  There are 
different viewpoints on CAT(0) cube complexes, one emphasising hyperplanes and one emphasising halfspaces. 
 The ubiquity of CAT(0) cube complexes comes from the fact that they are 
``geometric realisations'' of very simple combinatorial data; if one finds the hyperplane viewpoint more 
natural, one thinks of CAT(0) cube complexes as dual to~\emph{wallspaces}, as 
explained in~\cite{Nica,ChatterjiNiblo,HruskaWise}; if one favours halfspaces, one can think of CAT(0) cube 
complexes as dual to \emph{pocsets}, as explained in~\cite{Sageev:book}.  The viewpoints are equivalent, and 
also equivalent to~\emph{discrete median algebras}~\cite{Roller} and~\emph{median graphs}~\cite{Chepoi}.  All 
of these notions are intimately related, and there are different situations making each viewpoint optimal.  

\begin{defn}[Hyperplane]\label{defn:hyperplane}
 Let $X$ be a CAT(0) cube complex.  A \emph{hyperplane} is a connected subspace $h\subset X$ such that for 
each cube $c$ of $X$, the intersection $h\cap c$ is either empty or a midcube of $c$.  The \emph{carrier} 
$\neb(h)$ is the union of all (closed) cubes intersecting $h$.
\end{defn}

Each midcube in $X$ is contained in exactly one hyperplane.

By e.g.~\cite{Sageev95}, if $h$ is a hyperplane, then $h$ is again a CAT(0) cube complex whose 
cubes are midcubes of the form $h\cap c$, where $c$ is a cube of $X$ intersecting $h$.  Moreover, $\neb(h)$ is 
a CAT(0) cube complex isomorphic to $h\times[-\frac12,\frac12]$.  A $1$--cube $e$ with $h\cap e\neq\emptyset$ 
is \emph{dual to} $h$.  The $0$--skeleton of $h$ (regarded as a CAT(0) cube complex) is the set of midpoints 
of $1$--cubes dual to $h$.  The hyperplanes of $h$ are subspaces of the form $a\cap h$, as $a$ varies over the 
hyperplanes of $X$ that intersect $h$.

For each hyperplane $h$, the complement $X-h$ has exactly two components, called \emph{halfspaces} 
\emph{associated} to $h$.  We usually denote these $\OL h$ and $\OR h$.  If $\OL h$ is a halfspace, there is a 
unique hyperplane $h$ such that $\OL h$ is a component of $X-h$, and we also say $h$ is the hyperplane 
\emph{associated} to $\OL h$.

Hyperplanes are not subcomplexes of $X$, but the following construction is often convenient: let $X'$ be the 
CAT(0) cube complex obtained from $X$ by subdividing each $n$--cube into $2^n$ $n$--cubes in the obvious way, 
by declaring the barycentre of each cube to be a $0$--cube.  Then the hyperplanes of $X$ are now 
subcomplexes; they are no longer hyperplanes in the new cube complex $X'$, which has two ``parallel'' copies 
of each of the original hyperplanes.

\begin{defn}[Separation, crossing, parallelism]\label{defn:separation_crossing}
 Given a hyperplane $h$ and $A,B\subset X$, we say that $h$ \emph{separates} $A,B$ if there are 
distinct halfspaces $\OL h,\OR h$ associated to $h$ with $A\subset \OL h$ and $B\subset \OR h$.  We are 
usually interested in the case where $A,B$ are vertices, hyperplanes, or convex subcomplexes (see below).

If $A\subset X$ and the hyperplane $h$ separates two points of $A$, we say that $h$ \emph{crosses} $A$.

The hyperplanes $h,v$ cross if and only if $h,v$ are distinct and have nonempty 
intersection.  Equivalently, each halfspace associated to $h$ intersects each halfspace associated to $v$.

The subcomplexes $A,B$ are \emph{parallel} if any hyperplane $h$ crosses $A$ if and only if $h$ crosses $B$.  
Taking $A,B$ to be $1$--cubes gives an equivalence relation on $1$--cubes in which $1$--cubes 
are equivalent if they are dual to the same hyperplane (i.e. parallel).  So, there is a bijection 
between hyperplanes and parallelism classes of $1$--cubes.
\end{defn}

Crucially, if $\gamma$ is an embedded combinatorial path in $X$, 
then $\gamma$ is a geodesic if and only if $\gamma$ contains at most one $1$--cube dual to each hyperplane.  
In particular, if $x,y$ are vertices, then $\dist(x,y)$ is the number of hyperplanes separating $x$ from $y$.

\begin{rem}[Helly property for hyperplanes]
 Let $h_1,\ldots,h_n$ be hyperplanes that pairwise cross.  Then $\bigcap_{i=1}^nh_i\neq\emptyset$, i.e. there 
is a (not necessarily unique) $n$--cube whose barycentre is contained in each $h_i$.  So $\dimension X$ is the 
maximum possible cardinality of a set of pairwise crossing hyperplanes.

The same \emph{Helly property} --- any finite collection of pairwise intersecting subsets in a given class has 
nonempty total 
intersection --- also holds for the class of convex subcomplexes, discussed presently.
\end{rem}

\subsection{Medians, convexity, geodesics, and gates}\label{sec:medians}
In~\cite{Chepoi}, Chepoi established a correspondence between \emph{median graphs} and CAT(0) cube 
complexes.  Let $\Gamma$ be a connected graph, with graph metric $\rho$.  The \emph{interval} $[a,b]$ between 
vertices $a,b$ is the set of vertices $c$ with $\rho(a,b)=\rho(a,c)+\rho(b,c)$.  The graph $\Gamma$ is 
\emph{median} if for all $a,b,c\in \Gamma^{(0)}$, there is a unique vertex $\mu(a,b,c)$ with 
$\mu(a,b,c)\in[a,b]\cap[b,c]\cap[a,c]$.  Chepoi's theorem says that the $1$--skeleton of any CAT(0) cube 
complex is a median graph, and each median graph is the $1$--skeleton of a uniquely determined 
CAT(0) cube complex.

Given a CAT(0) cube complex $X$, we let $\mu:(X^{(0)})^3\to X^{(0)}$ be the median operator described above.  
There is a way to extend $\mu$ over the whole of $X$, but we will not need it here.

The subcomplex $Y$ of $X$ is \emph{full} if $Y$ contains every cube of $X$ whose $0$--skeleton appears in $Y$. 
 The full subcomplex $Y$ is \emph{convex} if for all vertices $x,y\in Y$ and $z\in X$, we have $\mu(x,y,z)\in 
Y$.  If $x,y$ are vertices, then the median interval $[x,y]$ is convex and consists of the 
union of combinatorial geodesics from $x$ to $y$.  If $Y$ is a convex subcomplex, then $[x,y]\subset Y$ 
for all $x,y\in Y^{(0)}$, and conversely.

If $\OL h$ is a halfspace, then the smallest subcomplex containing $\OL h$ is convex.  If $h$ is a hyperplane, 
then the carrier $\neb(h)$ is convex.    

Given a subspace $A$ of $X$, the \emph{convex hull} of $A$ is defined as follows.  First, let $A'$ be the 
intersection of all halfspaces containing $A$.  Then the convex hull is the union of all cubes contained in 
$A'$.  Convex hulls are convex; the convex hull of a pair of vertices $x,y$ is exactly the median 
interval $[x,y]$.

The median viewpoint enables a very useful construction, the \emph{gate map}.  Let $Y\subset X$ be a convex 
subcomplex.  Then there is a map $\gate=\gate_Y:X\to Y$ with the following properties (see e.g.~\cite[Section 
3]{BHS:HHS_I}:
\begin{itemize}
 \item $\gate$ is $1$--lipschitz (for both $\dist$ and $\dist_2$);
 \item if $x\in X^{(0)}$ and $h$ is a hyperplane, then $h$ separates $x$ from $\gate(x)$ if and only if $h$ 
separates $x$ from $Y$;
\item $\dist(x,\gate(x))=\dist(x,Y)$, and $\gate(x)$ is the unique closest vertex of $Y$ to $x$.
\end{itemize}

If $Y,Z$ are convex subcomplexes, then $\gate_Y(Z)$ and $\gate_Z(Y)$ are isomorphic CAT(0) cube complexes, and 
a hyperplane $h$ crosses $\gate_Y(Z)$ if and only if $h$ crosses both $Y$ and $Z$.  Moreover, there is a 
convex subcomplex $\gate_Y(Z)\times I$ of $X$, where the hyperplanes crossing $I$ are precisely those that 
separate $Y$ from $Z$.  In fact, $\gate_Y(Z)\times I$ is the convex hull of $\gate_Z(Y)\cup \gate_Y(Z)$.  We 
will use this in the proof of Theorem~\ref{thm:loxo}, when we note that 
$\diam(\gate_Y(Z))=\diam(\gate_Z(Y))$.  (See e.g.~\cite[Lemma 2.6]{BHS:HHS_I}.)

The following lemma is standard, follows easily from the above itemised properties (specifically the second) 
and we will use it later.  It appears in various places in the literature; see e.g.~\cite[Proposition 
2.6]{Genevois:diagram}.

\begin{lem}\label{lem:gate_sep}
 Let $X$ be a CAT(0) cube complex and $Y$ a convex subcomplex.  Let $\gate:X\to Y$ be the gate map.  Let 
$x,y\in X$ be $0$--cubes.  Then for all hyperplanes $h$, we have that $h$ separates $\gate(x),\gate(y)$ if and 
only if $h$ both intersects $Y$ and separates $x,y$.
\end{lem}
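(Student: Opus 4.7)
The plan is to deduce both directions directly from the second itemised property of the gate map, namely that a hyperplane $h$ separates $x$ from $\gate(x)$ if and only if $h$ separates $x$ from $Y$. The key preliminary observation is that if $h$ crosses $Y$, then both halfspaces of $h$ meet $Y$, and therefore $h$ cannot separate any vertex from $Y$. Consequently, in the presence of the crossing hypothesis $h\cap Y\neq\emptyset$, the cited gate property forces $x$ and $\gate(x)$ to lie in a common halfspace of $h$, and likewise for $y$ and $\gate(y)$.

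For the forward direction, I would suppose $h$ separates $\gate(x)$ and $\gate(y)$. Since $\gate(x),\gate(y)\in Y$ lie in different halfspaces of $h$, the hyperplane $h$ crosses $Y$; so it intersects $Y$. Then by the preliminary observation, $x$ lies in the same halfspace of $h$ as $\gate(x)$, and $y$ lies in the same halfspace as $\gate(y)$. Since these halfspaces are different, $h$ separates $x$ from $y$.

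For the reverse direction, I would assume that $h$ intersects $Y$ and separates $x$ from $y$. Applying the preliminary observation again, $x$ and $\gate(x)$ lie in the same halfspace associated to $h$, and similarly for $y$ and $\gate(y)$. Since $x$ and $y$ lie in different halfspaces, so do $\gate(x)$ and $\gate(y)$, which is the desired conclusion.

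The only subtlety is the preliminary observation that a hyperplane crossing $Y$ cannot separate any vertex from $Y$; this is immediate from the definitions once one notes that each halfspace of $h$ meets $Y$ in a nonempty subcomplex when $h\cap Y\neq\emptyset$. There is no substantial obstacle: the entire argument is a two-line application of the gate-map property, and I would expect the author's proof to be essentially identical.
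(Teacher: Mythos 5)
Your proof is correct and is exactly the argument the paper intends: the paper does not write out a proof, stating only that the lemma ``follows easily from the above itemised properties (specifically the second),'' which is precisely the derivation you give. The only implicit point, that for a convex subcomplex ``intersects $Y$'' and ``crosses $Y$'' coincide, is standard and harmless.
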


Although a hyperplane $h$ is not a subcomplex, we saw above that $h$ becomes a subcomplex in the cubical 
subdivision $X'$, and moreover it is convex.  Accordingly, we also have a gate map $\gate_h:X\to h$.  We will 
also use this in Theorem~\ref{thm:loxo} and Proposition~\ref{propi:SSL}.  More on gate maps to hyperplanes can 
be found in~\cite[Section 2.1.6]{FioravantiHagen}.

\begin{rem}
 If $Y\subset X$ is a subcomplex, then convexity of $Y$ in the above sense follows from convexity of $Y$ 
in the CAT(0) metric $\dist_2$~\cite{HaglundSemisimple}.  However, this only works for subcomplexes.  For general subspaces, convexity in the above sense implies CAT(0) convexity, but the converse 
might not hold: consider a diagonal line in the standard tiling of $\Euclidean^2$ by $2$--cubes.
\end{rem}

\subsection{Isometries and skewering}\label{sec:isometries}
In this subsection, we mostly follow~\cite{HaglundSemisimple} and~\cite{CapraceSageev}.

By $\Aut(X)$, we mean the group of cubical automorphisms of the CAT(0) cube complex $X$.  Automorphisms 
are isometries with respect to $\dist$ and $\dist_2$, although $(X,\dist_2)$ may have isometries that are not 
cubical (this is studied in~\cite{Bregman}).

The action $G\to\Aut(X)$ of the group $G$ is \emph{proper} if cube stabilisers are finite, and 
\emph{metrically proper} if for all $x_0\in X^{(0)}$ and all $R\ge0$, the set of $g\in G$ with 
$\dist(x_0,gx_0)\leq R$ is finite.  If $X$ is locally finite, then any proper action is metrically proper.

The action is \emph{cocompact} if there is a compact subcomplex $K\subset X$ with $G\cdot K=X$.

The following is well-known and widely-used; see e.g.~\cite[Lemma 2.3]{FioravantiHagen} for a proof:

\begin{lem}[Hereditary cocompactness]\label{lem:hereditary_cocompactness}
 Let $X$ be a CAT(0) cube complex on which the group $G$ acts cocompactly.  Then for all hyperplanes $h$ of 
$X$, the action of $\stabilizer_G(h)$ on $h$ is cocompact.
\end{lem}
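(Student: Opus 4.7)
The plan is to leverage two consequences of cocompactness: there are only finitely many $G$-orbits of cubes of $X$, and cubical automorphisms permute hyperplanes while carrying midcubes to midcubes equivariantly (so $g$ restricts to an isomorphism from $h$ to the cube complex $gh$, sending $c'\cap h$ to $gc'\cap gh$ for each cube $c'$ of $X$ crossed by $h$).

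First I would fix a finite list $c_1',\ldots,c_m'$ of $G$-orbit representatives of the cubes of $X$ and let $\mathcal H_0$ be the (finite) collection of hyperplanes of $X$ that cross at least one $c_j'$. The key observation is that every $G$-orbit of hyperplanes meets $\mathcal H_0$: given any hyperplane $v$ of $X$, pick a cube $c'$ crossed by $v$, and $g\in G$ with $gc'=c_j'$ for some $j$; then $gv$ crosses $c_j'$, so $gv\in\mathcal H_0$. In particular I can enumerate the hyperplanes in $\mathcal H_0\cap(G\cdot h)$ as $h_1,\ldots,h_n$ and fix elements $g_i\in G$ with $g_ih_i=h$ for each $i$.

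For the main step, let $c$ be an arbitrary cube of $h$ (recalling that the cubes of $h$, viewed as a CAT(0) cube complex in its own right, are the midcubes $c'\cap h$ of cubes $c'$ of $X$ crossed by $h$). Write $c=c'\cap h$, choose $g\in G$ with $gc'=c_j'$ for some $j$, and note that $gh$ is a hyperplane crossing $c_j'$, hence equals $h_i$ for some $i$. Then $g_ig\in\stabilizer_G(h)$, and by the equivariance of the midcube construction, $(g_ig)c=g_ic_j'\cap h$, so $c\subset(g_ig)^{-1}\cdot L$, where $L:=\bigcup_{i,j}(g_ic_j'\cap h)$ is a finite union of midcubes and therefore a compact subcomplex of $h$. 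Since $(g_ig)^{-1}\in\stabilizer_G(h)$, every cube of $h$ lies in $\stabilizer_G(h)\cdot L$, and the action of $\stabilizer_G(h)$ on $h$ is cocompact.

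The only delicate point is the bookkeeping of orbit representatives for cubes and for hyperplanes; once this is in place the argument reduces to a direct application of equivariance, so I expect no real obstacle.
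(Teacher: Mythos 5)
Your argument is correct. The paper itself does not prove this lemma but instead cites~\cite[Lemma 2.3]{FioravantiHagen}; your proof is the standard bookkeeping argument one would expect to find there, and it is complete: the finiteness of $\mathcal H_0$, the observation that every $G$--orbit of hyperplanes meets $\mathcal H_0$, the fact that $g_ig$ stabilises $h$, and the equivariance identity $(g_ig)(c'\cap h)=g_ic_j'\cap h$ are all verified correctly, and $L$ is a finite union of closed midcubes of $h$, hence a compact subcomplex whose $\stabilizer_G(h)$--translates cover $h$.
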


The cube complex $X$ is \emph{essential} if each halfspace $\OL h,\OR h$ contains points arbitrarily far from 
the 
associated hyperplane $h$.  The action of $G$ on $X$ is \emph{essential} if those points can all be chosen in 
a fixed $G$--orbit.  When $\Aut(X)$ acts on $X$ cocompactly, $X$ is essential if and only if the action of 
$\Aut(X)$ is essential.  

The cube complex $X$ is \emph{hyperplane-essential} if each hyperplane $h$ of $X$ is an essential CAT(0) cube 
complex, and $G\to\Aut(X)$ is a \emph{hyperplane-essential} action if, for each hyperplane $h$, the action of 
$\stabilizer_G(h)$ on $h$ is essential.

Given $g\in\Aut(X)$, we say that $g$ is \emph{combinatorially hyperbolic} if there is a combinatorial geodesic 
$\gamma:\reals\to X$ and a positive integer $\ell$ such that $g\gamma(t)=\gamma(t+\ell)$ for all $t\in\reals$, 
i.e. $g$ acts on $\gamma$ as a nontrivial translation.  Such a $\gamma$ is a \emph{combinatorial axis} for 
$g$.

If $g$ is combinatorially hyperbolic, then the set of hyperplanes $h$ such that $h$ intersects an axis of $g$ 
is independent of the choice of axis.  If $h$ is a hyperplane, then $g$ \emph{skewers} $h$ if $g\OL 
h\subset \OL h$, where $\OL h$ is one of the halfspaces associated to $h$.

Here is an exercise: if the convex hull $A$ of the axis of $g$ is finite-dimensional, then any 
hyperplane crossing $A$ is skewered by some power of $g$.

A theorem of Haglund~\cite{HaglundSemisimple} asserts that, under a mild assumption on $X$ that can always be 
arranged by replacing $X$ by its cubical subdivision, any $g\in\Aut(X)$ is either combinatorially hyperbolic 
or fixes a vertex.  Even without subdividing, any $g$ has a positive power that is either combinatorially 
hyperbolic or fixes a vertex, provided $X$ is finite-dimensional.

\begin{rem}
Higher-dimensional versions of Haglund's combinatorial semisimplicity theorem, and related results, have been 
obtained by Woodhouse, Genevois, and Woodhouse-Wise~\cite{Genevois:axis,Woodhouse:axis,WoodhouseWise}.
\end{rem}

For the CAT(0) metric, one can deduce the following (see e.g.~\cite[Exercise II.6.6(2)]{BH}):

\begin{lem}\label{lem:CAT0hyperbolic}
 Let $X$ be a finite-dimensional CAT(0) cube complex and let $g\in\Aut(X)$.  Then either $g$ fixes a point in 
$X$, or $g$ is a hyperbolic isometry of the CAT(0) space $(X,\dist_2)$.
\end{lem}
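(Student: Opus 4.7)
The plan is to show that every $g\in\Aut(X)$ is a \emph{semisimple} isometry of the CAT(0) space $(X,\dist_2)$, which is complete because a finite-dimensional CAT(0) cube complex has only finitely many isometry types of cubes~\cite{Bridson:thesis,Leary}. Once semisimplicity is in hand, $g$ is by definition either elliptic (fixes a point) or hyperbolic, which is exactly the desired dichotomy.

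The main input is the combinatorial semisimplicity theorem recalled immediately before the lemma: some positive power $g^n$ either fixes a $0$--cube or is combinatorially hyperbolic. In the first case, the $\langle g\rangle$--orbit of that fixed vertex is a single $\langle g^n\rangle$--coset, hence finite, so $g$ has a bounded orbit in $(X,\dist_2)$; a standard Bruhat-Tits/circumcenter argument in the complete CAT(0) space then produces a point fixed by $g$.

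In the second case, let $\ell$ be the combinatorial translation length of $g^n$ along one of its axes. On vertices of $X$ one has $\dist_2\le\dist\le\sqrt{\dimension X}\cdot\dist_2$, since within a single Euclidean $k$--cube the $\ell_1$ and $\ell_2$ distances differ by a factor of at most $\sqrt{k}$. Hence the $\dist_2$--translation length of $g^n$ is at least $\ell/\sqrt{\dimension X}>0$, so in particular $g^n$ is not elliptic. Combining this with the Bridson-Haefliger semisimplicity theorem for cellular isometries of complete $M_\kappa$--polyhedral complexes with finitely many cell shapes~\cite[II.6]{BH} forces $g^n$ to be hyperbolic for $\dist_2$. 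Since $g$ commutes with $g^n$, it preserves $\mathrm{Min}(g^n)$, and the standard $Y\times\reals$ splitting of the minset shows that $g$ acts by a translation along the $\reals$ factor, so $g$ is itself hyperbolic.

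The only step I expect to require any care is ruling out a parabolic $g^n$: this is precisely the content of the Bridson-Haefliger semisimplicity theorem and is the place where completeness of $(X,\dist_2)$ and the boundedness of the set of cube shapes are essential. Once that is granted, everything else is a routine consequence of Haglund's combinatorial semisimplicity theorem, the bilipschitz comparison between $\dist$ and $\dist_2$, and standard CAT(0) arguments.
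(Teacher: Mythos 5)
Your proof is correct, but it is somewhat circular: the Bridson--Haefliger result you cite, Exercise II.6.6(2), already asserts that every cellular isometry of a complete $M_\kappa$--polyhedral complex with finitely many shapes is semisimple, i.e.\ elliptic or hyperbolic. Since $g\in\Aut(X)$ is a cellular isometry of $(X,\dist_2)$, and (as you observe) $(X,\dist_2)$ is complete with finitely many cube shapes in finite dimension, the lemma follows at once by applying that exercise to $g$ itself. The paper's parenthetical ``see e.g.\ [BH, Exercise II.6.6(2)]'' is exactly this: the lemma essentially \emph{is} the exercise, and no detour through Haglund's combinatorial semisimplicity or the power $g^n$ is needed.

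If you nevertheless want to route the argument through Haglund's theorem (reserving BH for $g^n$ only), the final step needs one more observation than ``$g$ acts by a translation along the $\reals$ factor, so $g$ is itself hyperbolic.'' Writing $g=(g_Y,\tau)$ on $\mathrm{Min}(g^n)\cong Y\times\reals$, with $\tau$ a nontrivial translation, you must still show the infimum of $\dist_2(p,gp)$ is attained. Since $g^n$ acts trivially on the $Y$ factor, $g_Y^n=\mathrm{id}_Y$, so $g_Y$ has finite order and is therefore elliptic in the complete CAT(0) space $Y$; a fixed point $y_0$ of $g_Y$ then gives $\dist_2\bigl((y_0,s),g(y_0,s)\bigr)=|\tau|$, which equals the translation length of $g$, so $g$ is hyperbolic. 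Without noting that $g_Y$ has finite order, positive translation length alone does not rule out $g$ being parabolic.
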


The finite dimensional hypothesis is necessary: there are infinite dimensional CAT(0) cube complexes 
with isometries that are combinatorially hyperbolic but CAT(0) parabolic~\cite{AlgomKfir}.

\begin{defn}[Rank one]\label{defn:rank_one}
If $g\in\Aut(X)$ is hyperbolic for the CAT(0) metric, we say that $g$ is \emph{not rank one} if some CAT(0) 
geodesic axis for $g$ lies in an isometrically embedded Euclidean half-flat $[0,\infty)\times\reals$, and $g$ 
is \emph{rank one} otherwise.
\end{defn}

The Double Skewering Lemma of Caprace-Sageev~\cite[p. 4]{CapraceSageev} is a vital tool for identifying 
hyperbolic isometries of CAT(0) cube complexes, given an ambient essential action.

\begin{lem}[Double skewering]\label{lem:double_skewering_lemma}
Let $X$ be a finite-dimensional CAT(0) cube complex and let $G$ act essentially on $X$.  Suppose that one of 
the following holds:
\begin{itemize}
 \item $G$ acts with no fixed point in the visual boundary $\boundary X$;
 \item $X$ is locally finite and $G$ acts cocompactly.
\end{itemize}
Let $h,v$ be disjoint hyperplanes, and let $\OL h,\OL v$ be halfspaces associated to $h,v$ respectively, with 
$\OL h\subsetneq\OL v$.  Then there exists a hyperbolic (in the combinatorial and CAT(0) metrics) element 
$g\in G$ such that $g\OL v\subsetneq\OL h$; in particular, $h$ separates $v$ from $gv$.
\end{lem}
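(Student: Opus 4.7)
The plan is to apply the Skewering Lemma of Caprace--Sageev (which, under the same hypotheses, produces for every halfspace $A$ a combinatorially hyperbolic element of $G$ sending $A$ strictly into itself) and then take a sufficiently large power.

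First, I would record the geometric setup. From $\OL h\subsetneq\OL v$ and disjointness of $h,v$, taking complements gives $\OR v\subsetneq\OR h$, and passing to closures gives $v\subset\OR h$ and $h\subset\OL v$. The number of hyperplanes strictly between $h$ and $v$ is finite, since $X$ is connected and so $h,v$ have finite combinatorial distance. Next, apply the Skewering Lemma to the halfspace $\OL h$, obtaining $g_0\in G$, combinatorially hyperbolic, with $g_0\OL h\subsetneq\OL h$. Inverting yields a strictly ascending chain
\[
\OL h\subsetneq g_0^{-1}\OL h\subsetneq g_0^{-2}\OL h\subsetneq\cdots,
\]
bounded by hyperplanes $g_0^{-n}h$ that march into $\OR h$ along a combinatorial axis of $g_0$ (which necessarily crosses $h$).

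The heart of the argument is showing that for some $n$, $g_0^{-n}\OL h\supsetneq\OL v$, equivalently $g_0^n\OL v\subsetneq\OL h$; then $g:=g_0^n$ gives the conclusion. Let $\tau\ge 1$ denote the combinatorial translation length of $g_0$. Each successive application of $g_0^{-1}$ shifts the hyperplane $g_0^{-n}h$ by at least $\tau$ further into $\OR h$ along the axis. Because only finitely many hyperplanes separate $h$ from $v$, after finitely many steps $g_0^{-n}h$ is pushed strictly past $v$, into $\OR v$. Since $\OL v$ is connected and disjoint from $g_0^{-n}h$, it must lie entirely in the halfspace of $g_0^{-n}h$ that contains $\OL h$, namely $g_0^{-n}\OL h$; the inclusion is strict because $\OR h\cap\OL v$ is nonempty. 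Finally, $g=g_0^n$ is hyperbolic in both metrics because $g_0$ is combinatorially hyperbolic (hence also CAT(0)-hyperbolic by Lemma~\ref{lem:CAT0hyperbolic}), and powers preserve hyperbolicity.

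The main obstacle is the uniformity in this last step: one needs the entire halfspace $\OL v$ to be engulfed by a single $g_0^{-n}\OL h$, not merely its individual points. The resolution relies on the finiteness of the hyperplane-distance between the disjoint hyperplanes $h,v$, combined with the positive combinatorial translation length of $g_0$, which together ensure the orbit $\{g_0^{-n}h\}$ actually traverses past $v$ in finitely many steps.
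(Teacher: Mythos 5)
Your argument has a genuine gap at the step ``after finitely many steps $g_0^{-n}h$ is pushed strictly past $v$, into $\OR v$.'' The single Skewering Lemma applied to $\OL h$ produces a combinatorially hyperbolic $g_0$ whose axis crosses $h$, but there is no reason for the axis of $g_0$ to cross $v$: the iterates $g_0^{-n}h$ march along the axis into $\OR h$, but $\OR h$ properly contains both $\OR v$ and the (typically unbounded) region $\OR h\cap\OL v$, and the orbit can stay in the latter forever. A concrete counter-scenario occurs already for a free group acting on a regular tree: take $h,v$ to be midpoints of two distinct edges $e_1,e_2$ incident to a common vertex $x$, with $\OL h$ the halfspace away from $x$ and $\OL v$ the halfspace of $v$ containing $x$ (so $\OL h\subsetneq\OL v$), and take $g_0$ a loxodromic whose axis passes through $e_1$ and a third edge $e_3$ at $x$ but avoids $e_2$; then every $g_0^{-n}h$ lies in $\OL v$, so $\OL v$ is never contained in $g_0^{-n}\OL h$ (indeed $g_0^{-n}\OR h\subset\OL v$ for all $n$). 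The finiteness of the number of hyperplanes separating $h$ from $v$ does not save the argument, because it only bounds how far apart $h$ and $v$ are, not how the $g_0$--orbit of $h$ is positioned relative to $v$.

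The paper does not attempt to derive Double Skewering from single Skewering. In the case without a fixed point at infinity it simply cites Caprace--Sageev, whose own proof uses the Flipping Lemma (roughly: flip $\OL h$ to get $g_1$ with $g_1\OR h\subsetneq\OL h$, flip $\OR v$ to get $g_2$ with $g_2\OL v\subsetneq\OR v$, and take $g=g_1g_2$, for which $g\OL v\subsetneq g_1\OR v\subset g_1\OR h\subsetneq\OL h$); the remaining content of the paper's proof is the reduction of the ``locally finite and cocompact'' case to the fixed-point-free case via the Caprace--Sageev product decomposition $X=X_1\times\cdots\times X_p\times Y$, plus the observation that skewering forces unbounded orbits, hence combinatorial and (via Lemma~\ref{lem:QI}) CAT(0) hyperbolicity. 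To repair your approach you would need an element whose axis skewers both $h$ and $v$ simultaneously, which is essentially the content of the Flipping Lemma rather than a consequence of single Skewering.
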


\begin{proof}
 The ``in particular'' statement follows immediately from $g\OL v\subsetneq\OL h$.  Second, it also follows 
immediately from $g\OL v\subsetneq\OL h$ that $\langle g\rangle$ has unbounded orbits in $X$.  Hence, 
up to replacing $g$ by a positive power, $g$ is combinatorially hyperbolic.  Since $X$ is finite-dimensional, 
the 
identity $(X,\dist)\to (X,\dist_2)$ is a $G$--equivariant quasi-isometry (by Lemma~\ref{lem:QI} below), so 
$\dist_2(g^nx,x)$ grows linearly 
in $n$, for any $x\in X$, whence $g$ is also hyperbolic in the CAT(0) metric.

So, it remains to find $g$ with $g\OL v\subsetneq\OL h$.  In the case where $G$ acts without a fixed point at 
infinity, the desired statement appears on page 4 of~\cite{CapraceSageev}.

Suppose $G$ acts cocompactly on $X$ and $X$ is locally finite. 
 In this case, Corollary~4.9 of~\cite{CapraceSageev} implies that $X=X_1\times\cdots\times X_p\times Y$, where 
each $X_i$ has compact hyperplanes, and the finite-index subgroup $G'\leq G$ preserving this decomposition 
does not fix a point in $\boundary Y$.

If $h,v$ are hyperplanes of the form $X_1\times\cdots\times X_p\times \bar h,X_1\times\cdots\times X_p\times 
\bar v$, where $\bar h,\bar v$ are hyperplanes of $Y$, then the claim follows from the version for actions 
without fixed points at infinity.  So, it suffices to prove the claim in the case where hyperplanes of $X$ are 
compact, $X$ is locally finite, and $G$ acts cocompactly and essentially; we leave this as an exercise.
\end{proof}

\subsection{The contact graph}\label{sec:contact_graph}
Let $X$ be a CAT(0) cube complex and let $\mathcal W$ be the set of hyperplanes.  Then $\{\neb(h):h\in\mathcal 
W\}$ is a covering of $X$, and we define the \emph{contact graph} $\contact X$ to be the 
(necessarily connected) $1$--skeleton of the 
nerve of this covering, i.e. the intersection graph of the hyperplane carriers.  This graph was initially 
defined in~\cite{Hagen:contact}; it has a vertex for each hyperplane, with two vertices adjacent provided no 
third hyperplane separates the corresponding hyperplanes.  By~\cite[Theorem 4.1]{Hagen:contact}, $\contact X$, 
equipped with its usual 
graph metric, is quasi-isometric to a tree (with constants independent of $X$).  In particular, $\contact X$ 
is hyperbolic.

Throughout the paper, if $h$ is a hyperplane of $X$, we also use the letter $h$ to mean the corresponding 
vertex of $\contact X$.

We now define a coarse map $\pi:X\to 2^{\contact X}$ by first defining $\pi$ on vertices of $X$, and then on higher-dimensional open cubes.

Given $x\in X^{(0)}$, the set of hyperplanes $h$ with $x\in\neb(h)$ corresponds to a complete subgraph of 
$\contact X$, which we denote $\pi(x)$.  Hence $\pi:X^{(0)}\to 2^{\contact X}$ is a coarse map.  For each open edge 
$e$ of $X$, we define $\pi(e)$ to be the vertex corresponding to the hyperplane dual to $e$.  Hence we have a 
coarse map $\pi:X^{(1)}\to 2^{\contact X}$.  More generally, if $c$ is an open $n$--cube, $n\geq1$, let 
$\pi(c)$ be the complete subgraph with vertex set the hyperplanes intersecting $c$.  
% 
% Note that if $c,c'$ are open cubes with $c$ contained in the closure of of $c'$, then $\pi(c)\subset\pi(c')$.  If $x\in c$ is a $0$--cube, then $\pi(c)\subset\pi(x)$.

So, $\pi:X\to 2^{\contact X}$ sends cubes to bounded sets, and, if $f,c$ are open cubes with $\bar 
f\subset\bar c$, then $\pi(f)\subset\pi(c)$.  If $x$ is a vertex of $\bar f$, then 
$\pi(x)\cap\pi(f)\neq\emptyset$.  (Here, $\bar f,\bar c$ denote the closures of $f,c$.)  In particular, 
$\pi:X^{(1)}\to 2^{\contact X}$ is coarsely lipschitz.

Since $\Aut(X)$ acts on the set of hyperplanes of $X$, preserving intersection and non-intersection of 
carriers, we get a homomorphism $\Aut(X)\to\Aut(\contact X)$; the relationship between these two groups is 
studied in~\cite{Fioravanti:contact}.

With respect to this action of $\Aut(X)$ on $\contact X$, the map $\pi$ is equivariant, i.e. $\pi(gx)=g\pi(x)$ 
for all $x\in X,g\in\Aut(X)$.
% 
% Observe that the coarse map $\pi$ is coarsely lipschitz, because adjacent vertices of $X^{(0)}$ are sent to 
% intersecting complete subgraphs in $\contact X$.

We will chiefly be interested in when an element $g\in\Aut(X)$ acts on $\contact X$ \emph{loxodromically}, 
i.e. when the map $\integers\to\integers$ given by $n\mapsto \dist_{\contact X}(h,g^nh)$ is bounded above and 
below by increasing linear functions of $n$ (the functions depend on $h$, but the existence of such functions 
for some hyperplane implies it for any other hyperplane).

Since $\pi$ is equivariant and coarsely lipschitz, to prove that a hyperbolic isometry $g\in\Aut(X)$ is 
loxodromic, it suffices to prove the following: for any hyperplane $h$, there exists $C>0$ such that 
$\dist_{\contact X}(h,g^nh)>Cn$ for all $n>0$.  A non-hyperbolic isometry can never be loxodromic on $\contact 
X$, since it stabilises a clique $\pi(x)$, where $x\in X$ is a point fixed by $g$.

Finally, here is a simple observation that is used in the proof of Proposition~\ref{propi:SSL}:

\begin{lem}\label{lem:single_point}
Let $X$ be a CAT(0) cube complex and let $v,h$ be hyperplanes such that $\dist_{\contact X}(v,h)>2$.  Then 
$\gate_v(h)$ is a single point.
\end{lem}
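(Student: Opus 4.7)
The plan is to prove the contrapositive: I assume $\gate_v(h)$ contains two distinct vertices and deduce $\dist_{\contact X}(v,h)\le 2$. Since $\gate_v(h)\subset v$ is a convex subcomplex of the CAT(0) cube complex $v$, it is connected through its $1$-skeleton, so I can find two vertices $x_1,x_2\in\gate_v(h)$ joined by an edge of $v$.

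Next I would unpack what this edge is. A $1$-cube of the cube complex $v$ arises as (half of) the midcube $v\cap c$ of a $2$-cube $c$ of $X$ that is crossed by $v$, and $x_1,x_2$ can be taken to be midpoints of two edges of $c$ dual to $v$. Let $a$ denote the other hyperplane of $X$ that crosses $c$; then $a$ intersects $v$, so $a$ and $v$ are in contact in $X$, and the midcube $a\cap c$ separates the two edges of $c$ dual to $v$, so $a$ separates $x_1$ from $x_2$ in $X$. In particular $a\ne v$.

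Now I would invoke Lemma~\ref{lem:gate_sep}. Pick vertices $y_1,y_2\in h$ with $\gate_v(y_i)=x_i$. Since $a$ intersects $v$ and separates $\gate_v(y_1)$ from $\gate_v(y_2)$, the lemma forces $a$ to separate $y_1$ from $y_2$ in $X$. Because $y_1,y_2$ lie on the connected subspace $h$, this is only possible if $a\cap h\ne\emptyset$, so $a\ne h$ and $a$ is in contact with $h$. Consequently there is a path $v\sim a\sim h$ in $\contact X$ of length at most $2$, contradicting $\dist_{\contact X}(v,h)>2$.

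The only mildly delicate step is making sure the edge of $\gate_v(h)$ really does produce a hyperplane $a$ of $X$ separating $x_1$ from $x_2$ as points of $X$; routing through the $2$-cube $c$ and its second crossing hyperplane handles this cleanly. Either one applies Lemma~\ref{lem:gate_sep} in the cubical subdivision $X'$, where $v$ is a genuine convex subcomplex, or one uses the completely analogous statement for $v$ as a convex subcomplex of $X'$ with hyperplanes of $X$ playing the role of the separating walls. Beyond that the argument is just the gate-map characterisation combined with the definition of the contact graph.
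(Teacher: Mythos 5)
Your proof is correct and takes essentially the same approach as the paper. The paper simply cites the standard fact that hyperplanes of $\gate_v(h)$ are exactly the hyperplanes of $X$ crossing both $v$ and $h$ (so there are none, hence $\gate_v(h)$ is a point); you re-derive that fact via the contrapositive, using an adjacent pair of vertices, the dual hyperplane $a$ of the connecting square, and Lemma~\ref{lem:gate_sep} to push $a$ back to a hyperplane crossing $h$ --- the same underlying observation, unpacked a bit more explicitly.
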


\begin{proof}
Recall that, when $v$ is regarded as a CAT(0) cube complex, $\gate_v(h)$ is a convex subcomplex of $v$ whose 
hyperplanes have the form $a\cap v$, where $a$ is a hyperplane of $X$ that crosses both $v$ and $h$.  If 
$\dist_{\contact X}(v,h)>2$, there are no such hyperplanes $a$, so $\gate_v(h)$ is a CAT(0) cube complex with 
no hyperplanes and hence no positive-dimensional cubes, i.e. $\gate_v(h)$ is a point. 
\end{proof}

\section{Facing tuples and chains with Dilworth and Ramsey}\label{sec:ramsey_dilworth_tricks}
Fix a CAT(0) cube complex $X$.   Recall that $\dimension X$ is equal to the supremum of the cardinalities of 
sets of pairwise intersecting hyperplanes.

\begin{defn}[Facing tuple]\label{defn:facing_tuple}
 Let $n\in\naturals\cup\{\infty\}$.  A \emph{facing $n$--tuple} is a set of hyperplanes $\{h_1,\ldots,h_n\}$ 
with the property that, for each $h_i$, we can choose an associated halfspace $\OL h_i$ such that 
$\OL h_i\cap\OL h_j=\emptyset$ for $i\neq j$.  Equivalently, there do not exist $i,j,k$ such that $h_i$ 
separates $h_j$ from $h_k$.
\end{defn}

\begin{defn}[Chain]\label{defn:chain}
 A \emph{chain} in $X$ of length $n$ is a set $\{h_1,\ldots,h_n\}$ of hyperplanes such that $h_i$ 
separates $h_{i-1}$ from $h_{i+1}$ for $2\leq i\leq n-1$.
\end{defn}

Let $h_1,\ldots,h_n$ be a chain. For $2\leq i\leq n$, let $\OL h_i$ be the halfspace associated to $h_i$ 
and containing $h_1$, and let $\OL h_1$ be the halfspace associated to $h_1$ not containing $h_2$.  Then $\OL 
h_1\subsetneq\cdots\subsetneq\OL h_n$.  Conversely, let $h_1,\ldots,h_n$ be hyperplanes for which we can 
choose a 
halfspace $\OL h_i$ associated to each $h_i$ in such a way that $\OL h_1\subsetneq\cdots\subsetneq \OL h_n$.  
Then $\{h_1,\ldots,h_n\}$ is a chain.

The following is a useful trick:

\begin{prop}[Chains and facing tuples]\label{prop:ramsey_dilworth}
For all $D,N\in\naturals$, there exists $K(D,N)\geq 1$ such that the following holds.  Let $X$ be a 
$D$--dimensional CAT(0) cube complex, and let $\mathcal W$ be a finite set of hyperplanes in $X$ such that any 
facing tuple in $\mathcal W$ has cardinality at most $N$.  Then $\mathcal W$ contains a chain of cardinality 
at least $|\mathcal W|/K(D,N)$.
\end{prop}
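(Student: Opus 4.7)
The plan is to fix a vertex $x_0\in X^{(0)}$ and, for each $h\in\mathcal W$, let $\OL h$ denote the halfspace of $h$ that does not contain $x_0$.  These choices induce a partial order on $\mathcal W$ by declaring $h\leq h'$ if and only if $\OL h\subseteq\OL h'$.  By the remark immediately following Definition~\ref{defn:chain}, a chain in this poset is exactly a chain of hyperplanes in the sense of that definition.  So by Dilworth's theorem, $\mathcal W$ contains a chain of cardinality at least $|\mathcal W|/k$, where $k$ is the maximum size of an antichain in the poset, and it suffices to bound $k$ from above in terms of $D$ and $N$.

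For this I would use Ramsey's theorem.  Let $\mathcal A\subseteq\mathcal W$ be an antichain and consider any pair $h,h'\in\mathcal A$.  Either $h$ and $h'$ cross or they are disjoint.  In the disjoint case, since $x_0$ lies on a single side of each of $h$ and $h'$, a short case analysis on the four possible halfspace configurations shows that incomparability of $\OL h$ and $\OL h'$ forces $\OL h\cap\OL h'=\emptyset$, which is to say that $\{h,h'\}$ is a facing pair witnessed by the chosen halfspaces.  Now $2$-color the pairs in $\mathcal A$: red if the pair crosses, blue if the pair faces.  A red clique is a set of pairwise crossing hyperplanes, and so has size at most $\dimension X = D$ by the Helly bound.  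A blue clique of size $m$ is a set of hyperplanes with pairwise disjoint chosen halfspaces, hence a facing $m$-tuple, so $m\leq N$ by hypothesis.  Ramsey's theorem then gives $|\mathcal A|\leq R(D+1,N+1)-1$, so taking $K(D,N):=R(D+1,N+1)$ works.

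The main --- indeed essentially only --- subtlety is the dichotomy invoked in the second paragraph: that two disjoint hyperplanes which are incomparable in the basepoint-induced poset must actually face one another with respect to the chosen halfspaces.  This reduces to the observation that, among the four ways to pair a halfspace of $h$ with a halfspace of $h'$ when $h,h'$ are disjoint, the ``overlap without containment'' configuration cannot be realised by the away-from-$x_0$ halfspaces, because $x_0$ is a single vertex lying on exactly one side of each hyperplane; the remaining incomparable option is then necessarily the facing one.  Once this is in hand, the rest is a textbook combination of Dilworth and Ramsey, matching the strategy the paper advertises.
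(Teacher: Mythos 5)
Your proof is correct and follows the same Ramsey-then-Dilworth blueprint as the paper, but the basepoint trick is a genuine streamlining of the Ramsey step, and it is worth saying why. The paper applies Dilworth to an \emph{arbitrary} choice of one halfspace per hyperplane; as a result, an antichain of pairwise-disjoint hyperplanes may exhibit either the configuration $\OL h\cap\OL h'=\emptyset$ or the configuration $\OR h\cap\OR h'=\emptyset$, and the paper needs a short but slightly delicate argument (the paragraph about whether some $h_i$ can separate $h_j$ from $h_k$) to conclude that such an antichain is still a facing tuple in the sense of Definition~\ref{defn:facing_tuple}. Your choice of halfspaces pointing away from a fixed vertex $x_0$ excludes the configuration $\OR h\cap\OR h'=\emptyset$ at the outset, since $x_0\in\OR h\cap\OR h'$; consequently a disjoint, incomparable pair is immediately a facing pair \emph{witnessed by the chosen halfspaces}, which is the first characterisation of a facing tuple, and no further separation argument is needed. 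The constant you obtain is the same up to an inessential $\pm 1$. One small presentational point: the phrase ``overlap without containment'' would read more clearly if you named the excluded configuration explicitly as $\OR h\cap\OR h'=\emptyset$ (the unique disjoint, incomparable, non-facing possibility) and then observed that it is ruled out because $x_0$ lies on that side of both hyperplanes.
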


\begin{proof}
%Given a hyperplane $h$, we let $\OL h,\OR h$ denote the two associated halfspaces.

Let $\widehat{\mathcal W}=\{\OL h:h\in\mathcal W\}$ be a set of halfspaces with exactly one 
associated to each hyperplane in $\mathcal W$.  The set $\widehat{\mathcal W}$ is partially ordered by 
inclusion.  For each $h\in\mathcal W$, recall the notation $\OR h=X-\OL h$.

\textbf{Bounding $\subseteq$--antichains with Ramsey's theorem:}  Let $\mathcal A\subset\widehat{\mathcal W}$ 
be a set of halfspaces, no two of which are $\subseteq$--comparable.  So, for all $\OL h,\OL v\in\mathcal A$, 
exactly one of the following holds:
\begin{enumerate}
 \item \label{item:blue} The hyperplanes $h,v$ are distinct and $h\cap v\neq \emptyset$.
 \item \label{item:red} The hyperplanes $h,v$ are disjoint. So, one of the following holds: 
$\OL v\cap\OL h=\emptyset$, or $\OR v\cap \OR h=\emptyset$.
\end{enumerate}

Suppose that $\OL h_1,\ldots,\OL h_n\in\mathcal A$ are distinct halfspaces with the property that 
item~\eqref{item:blue} holds for $\OL h_i,\OL h_j$ for all $i\neq j$.  Then the set $\{h_1,\ldots,h_n\}$ 
contains $n$ distinct, pairwise-intersecting hyperplanes.  Hence $n\leq D$.  

%Indeed, if there exist $i,j$ such 
%that $\OL h_i$ and $\OL h_j$ are both associated to $h_i$, then $\OL h_i,\OL h_j$ satisfy 
%item~\eqref{item:red}, not item~\eqref{item:blue}.

Suppose that $\OL h_1,\ldots,\OL h_n\in\mathcal A$ are distinct halfspaces with the property that 
item~\eqref{item:red} holds for $\OL h_i,\OL h_j$ for all $i\neq j$.  

For all $i,j,k\leq n$, the hyperplane $h_i$ cannot separate $h_j$ from $h_k$.  Indeed, suppose that 
this is the case.  Then, up to relabelling, $h_j\subset \OL h_i$.  Since $\OL h_i$ and $\OL h_j$ are 
$\subseteq$--incomparable, $h_i\subseteq\OL h_j$, i.e. $\OR h_i\cap\OR h_j=\emptyset$.  But then either $\OL 
h_k$ contains $h_i,h_j$ and hence contains $\OL h_i$, or $\OL h_k$ is contained in $\OL h_j$.  Either of these 
situations violates pairwise-incomparability of the elements of $\mathcal A$.  

We have just shown that $\{h_1,\ldots,h_n\}$ form a facing tuple.  Hence $n\leq N$.

% To bound $n$ in terms of $N$, we now check 
% that the hyperplanes $h_1,\ldots,h_n$ are all distinct, unless $n\leq 2$.
% 
% If $\{h_1,\ldots,h_n\}$ contains exactly one hyperplane, then $n\leq 2$.  So assume that there are 
% at least two distinct hyperplanes in $\{h_1,\ldots,h_n\}$.
% 
% For each $h_i$, let $h_i^+$ be the halfspace associated to $h_i$ that contains all of the hyperplanes 
% $h_j,j\neq i$.  Such a halfspace exists because $h_i$ cannot separate any $h_j,h_k$.  Such a halfspace is 
% well-defined since there is at least one hyperplane $h_j\neq h_i$.
% 
% Fix $i$.  If $\OL h_i=h_i^+$, then $X-h_j^+\subset \OL h_i$ for all $j\neq i$, so since $\OL h_i,\OL h_j$ are 
% incomparable, we have $\OL h_j=h_j^+$.  If $\OL h_i=X-\OL h_i^+$, then similarly $\OL h_j=X-\OL h_j^+$ for all 
% $j$.  In particular, we cannot have distinct halfspaces $\OL h_j,\OL h_k$ with $h_j=h_k$.  Hence 
% $\{h_1,\ldots,h_n\}$ contains $n$ distinct hyperplanes.  We have found a facing $n$--tuple, so $n\leq 
% \max\{N,2\}$.

Let $G$ be the complete graph with vertex set $\mathcal A$.  We colour an edge blue if the corresponding pair 
of halfspaces satisfy item~\eqref{item:blue} and red if the halfspaces satisfy item~\eqref{item:red}.  This 
colours all of the edges.  We have shown that blue cliques have at most $D$ vertices, and red cliques have at 
most $N$ vertices.  So, by \textbf{Ramsey's theorem}~\cite{Ramsey}, $|\mathcal A|\leq 
\mathrm{Ram}(D+1,N+1)-1$, where $\mathrm{Ram}(\bullet,\bullet)$ denotes the Ramsey number.

\textbf{Applying Dilworth's theorem:}  Let $K(D,N)=\mathrm{Ram}(D+1,N+1)-1$.  We have shown that 
antichains in $\widehat{\mathcal W}$ have cardinality at most $K(D,N)$.  So, by \textbf{Dilworth's 
theorem}~\cite{Dilworth}, we have a partition $$\widehat{\mathcal 
W}=\bigsqcup_{i=1}^{C}\widehat{\mathcal W}_i,$$ where $C\leq K(D,N)$ and each $\widehat{\mathcal W}_i$ is a set 
of halfspaces that is totally ordered by inclusion.  Let $\mathcal W_i$ be the set of hyperplanes $h$ such that 
the halfspace associated to $h$ and belonging to $\widehat{\mathcal W}$ appears in $\widehat{\mathcal W}_i$.  
Then $\mathcal W_i$ is a chain in the 
sense of Definition~\ref{defn:chain}, and for some $i$, we have $|\mathcal W_i|\geq|\mathcal W|/K(D,N)$.
\end{proof}

Here are some consequences.  The first answers a question posed by Abdul Zalloum:

\begin{cor}\label{cor:geodesic}
 Let $X$ be a CAT(0) cube complex of dimension $D<\infty$.  Let $\mathcal W$ be a set of hyperplanes in $X$ 
that does not contain a facing triple.  Then $X$ contains a (combinatorial or CAT(0)) geodesic $\gamma$ such 
that $\gamma$ intersects at least $|\mathcal W|/K(D,2)$ of the hyperplanes in $\mathcal W$.
\end{cor}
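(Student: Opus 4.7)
The plan is to apply Proposition~\ref{prop:ramsey_dilworth} directly with $N=2$: since $\mathcal W$ contains no facing triple, we get a chain $\{h_1,\ldots,h_n\}\subseteq\mathcal W$ with $n\geq|\mathcal W|/K(D,2)$.  By the discussion following Definition~\ref{defn:chain}, we may choose halfspaces so that
\[
\OL{h}_1\subsetneq\OL{h}_2\subsetneq\cdots\subsetneq\OL{h}_n.
\]
The remaining task is then to exhibit a single geodesic crossing every $h_i$.

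Next, I would pick a vertex $x\in\OL{h}_1$ and a vertex $y\in\OR{h}_n=X-\overline{\OL{h}_n}$; both sets contain vertices because each $h_i$ is a genuine hyperplane and each halfspace of a CAT(0) cube complex meets the $0$--skeleton.  By the strict nesting, $x$ lies in every $\OL{h}_i$ while $y$ lies in none of them, so each $h_i$ separates $x$ from $y$.  Then I would invoke the standard fact recorded in Section~\ref{sec:hyperplanes}: any combinatorial geodesic $\gamma$ from $x$ to $y$ crosses precisely the hyperplanes separating $x$ and $y$, hence crosses each of the $n$ hyperplanes $h_1,\ldots,h_n$.  The same reasoning works for a CAT(0) geodesic $\gamma$ from $x$ to $y$, since halfspaces are convex in the CAT(0) metric, so $\gamma$ meets each bounding hyperplane $h_i$.

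There is essentially no obstacle here; the content is entirely in Proposition~\ref{prop:ramsey_dilworth}, which converts the absence of a facing triple into a long chain, and the corollary is just the observation that chains are witnessed by geodesics between points on the two sides of the nested halfspaces.  The only small point to verify is the existence of the two endpoint vertices, which is immediate from the fact that every halfspace of a CAT(0) cube complex contains vertices (equivalently, both components of $X\setminus h$ meet $X^{(0)}$).
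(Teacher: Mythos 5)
Your argument is correct and is essentially the paper's own proof: apply Proposition~\ref{prop:ramsey_dilworth} with $N=2$ to extract a chain of length at least $|\mathcal W|/K(D,2)$, nest the halfspaces, and take any geodesic between a vertex in the smallest halfspace and one in the complement of the largest. The extra verifications you supply (existence of endpoint vertices, each $h_i$ separating $x$ from $y$) are exactly the routine details the paper leaves implicit.
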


\begin{proof}
 Use Proposition~\ref{prop:ramsey_dilworth}, with $N=2$, to find a chain $\mathcal C=\{h_1,\ldots,h_n\}$ of 
cardinality at least $|\mathcal W|/K(D,2)$ in $\mathcal W$.  Choose associated halfspaces $\OL h_1,\ldots\OL 
h_n$ with $\OL h_1\subset\cdots\subset\OL h_n$.  Choose vertices $x\in\OL h_1$ and $y\in\OR h_n$, and let 
$\gamma$ be any geodesic from $x$ to $y$.
\end{proof}

In a similar vein, we can slightly strengthen Lemma 2.1 from~\cite{CapraceSageev}. This was pointed out by 
Elia Fioravanti and Abdul Zalloum. 

\begin{cor}\label{cor:pencil}
 Let $X$ be a CAT(0) cube complex of dimension $D<\infty$.  Let $k\in\naturals$.  Let $\gamma$ be a geodesic 
(in the combinatorial or CAT(0) metric) that crosses at least $D\cdot k$ hyperplanes.  Then the set of 
hyperplanes crossing $\gamma$ contains a chain of cardinality $k$.
\end{cor}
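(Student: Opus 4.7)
The plan is to bypass Proposition~\ref{prop:ramsey_dilworth} (which would give a worse constant $K(D,2)$ rather than $D$) and apply Dilworth's theorem directly to a natural partial order on the set of hyperplanes crossing $\gamma$. The key observation is that, along a single geodesic, the comparability relation on halfspaces is particularly clean: two hyperplanes crossing $\gamma$ are comparable exactly when they are disjoint, and incomparable exactly when they cross.

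Fix an endpoint (or basepoint) $x$ of $\gamma$. Let $\mathcal W$ be the set of hyperplanes that $\gamma$ crosses, and for each $h\in\mathcal W$ let $\OL h$ be the halfspace of $h$ containing $x$. Define $h\leq h'$ on $\mathcal W$ by $\OL h\subseteq\OL{h'}$; this is obviously a partial order. I would then verify the comparability claim: if $h,h'\in\mathcal W$ cross, then all four ``quadrants'' $\OL h\cap\OL{h'}$, $\OL h\cap\OR{h'}$, etc., are nonempty, so the halfspaces are $\subseteq$--incomparable. Conversely, if $h,h'\in\mathcal W$ are disjoint and $\gamma$ crosses $h$ strictly before $h'$ when travelling away from $x$, then connectedness of $h'$ together with disjointness from $h$ forces $h'\subset\OR h$, and similarly $h\subset\OL{h'}$; chasing the halfspace of $h'$ containing $x$ then yields $\OL h\subsetneq\OL{h'}$.

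Consequently, antichains in $(\mathcal W,\leq)$ are sets of pairwise crossing hyperplanes, and by the Helly property for hyperplanes they have cardinality at most $D=\dimension X$. Dilworth's theorem therefore partitions $\mathcal W$ into at most $D$ chains under $\leq$. Since $|\mathcal W|\geq Dk$, pigeonhole produces a $\leq$--chain $h_{i_1}\leq\cdots\leq h_{i_k}$. The associated halfspaces satisfy $\OL{h_{i_1}}\subsetneq\cdots\subsetneq\OL{h_{i_k}}$, which by the paragraph immediately following Definition~\ref{defn:chain} means $\{h_{i_1},\ldots,h_{i_k}\}$ is a chain in the required sense.

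The only real point to check carefully is the claim about disjoint hyperplanes crossing $\gamma$ yielding strict halfspace inclusions with a consistent orientation coming from $x$; this is the main obstacle, but it reduces to elementary tracking of which side of each hyperplane the basepoint lies on, and works uniformly for both combinatorial and CAT(0) geodesics since in either case each hyperplane is met at a single well-defined point (or $1$--cube) of $\gamma$.
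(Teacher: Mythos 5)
Your proposal is correct and matches the paper's own proof: the paper likewise assigns to each $h\in\mathcal W$ the halfspace containing $\gamma(0)$, observes that $\subseteq$--incomparability forces crossing (so antichains have size at most $D$ by Helly), and applies Dilworth's theorem plus pigeonhole. Your write-up just spells out the comparability check in more detail than the paper does.
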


\begin{proof}
 Let $\mathcal W$ be the set of hyperplanes intersecting $\gamma$ and note that $\mathcal W$ cannot contain a 
facing triple.  Applying Proposition~\ref{prop:ramsey_dilworth} would yield the desired statement with $Dk$ 
replaced by $K(D,2)k$.  However, one can do a bit better: for each $h\in\mathcal W$, let $\OL h$ be the 
halfspace associated to $h$ and containing $\gamma(0)$.  Given $h,v\in\mathcal W$, the halfspaces $\OL h,\OL v$ 
are $\subset$--incomparable only if $h,v$ cross, so the claim follows by applying Dilworth's theorem to $\{\OL 
h:h\in\mathcal W\}$.
\end{proof}

It is well known that, when $X$ is finite-dimensional, the CAT(0) metric is quasi-isometric to the 
combinatorial metric (see e.g.~\cite[Lemma 2.2]{CapraceSageev}).  In the interest of self-containment, here we give a cosmetically different 
statement and proof~\cite{CapraceSageev} working with arbitrary points instead of vertices:

\begin{lem}\label{lem:QI}
 Let $X$ be a CAT(0) cube complex of dimension $D<\infty$.  Then there exist $\lambda_0\ge1,\lambda_1\ge0$, 
depending only on $D$, such that the following holds.  Let $x,y\in X$ be arbitrary points, and let $\mathcal 
W(x,y)$ be the set of hyperplanes separating $x$ from $y$.  Then 
$$\frac{1}{\lambda_0}\dist_2(x,y)-\lambda_1\leq |\mathcal W(x,y)|\leq \lambda_0\dist_2(x,y)+\lambda_1.$$
\end{lem}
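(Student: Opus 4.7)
The approach is to reduce the claim to the identity $\dist(v,w)=|\mathcal W(v,w)|$ for vertices $v,w$ (noted in Section~\ref{sec:hyperplanes}), combined with the elementary comparison $\ell_2\le\ell_1\le\sqrt{n}\,\ell_2$ on an $n$-cube.

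The first step is to establish the global bi-Lipschitz comparison $\dist_2(x,y)\le\dist(x,y)\le\sqrt{D}\,\dist_2(x,y)$ for all $x,y\in X$. The left inequality is immediate since $\ell_2\le\ell_1$ inside each cube and both $\dist$ and $\dist_2$ are path metrics. For the right inequality, I would take a CAT(0) geodesic $\gamma$ from $x$ to $y$, decompose it into its intersections with the (finitely many) closed cubes it meets, and apply $\ell_1\le\sqrt{D}\,\ell_2$ on each arc; summing bounds the $\dist$-length of $\gamma$ by $\sqrt{D}\,\dist_2(x,y)$.

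The second step is to pass from arbitrary points to nearby vertices. Choose vertices $v_x,v_y$ of the closed cubes containing $x$ and $y$, respectively, each at $\dist_2$-distance at most $\sqrt{D}/2$. The crucial observation is that any hyperplane separating $x$ from $v_x$ must cross the cube containing $x$, and a cube of dimension at most $D$ is crossed by at most $D$ hyperplanes. Hence $|\mathcal W(x,v_x)|$ and $|\mathcal W(y,v_y)|$ are each bounded by $D$, and the inclusion $\mathcal W(x,y)\subseteq\mathcal W(x,v_x)\cup\mathcal W(v_x,v_y)\cup\mathcal W(v_y,y)$ (and its mirror) gives $\bigl||\mathcal W(x,y)|-|\mathcal W(v_x,v_y)|\bigr|\le 2D$.

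Assembling the pieces, $|\mathcal W(v_x,v_y)|=\dist(v_x,v_y)\le\sqrt{D}\,\dist_2(v_x,v_y)\le\sqrt{D}\bigl(\dist_2(x,y)+\sqrt{D}\bigr)$, which yields the upper bound $|\mathcal W(x,y)|\le\sqrt{D}\,\dist_2(x,y)+3D$. Running the triangle inequalities in the opposite direction gives the matching lower bound, so one may take $\lambda_0=\sqrt{D}$ and $\lambda_1=3D$. No step is deep; the only point requiring any thought is the vertex-approximation step, where the bound on $|\mathcal W(x,v_x)|$ must come from the dimension of the cube containing $x$ rather than from the $\ell_1$-length $\dist(x,v_x)$ (which could be as large as $D/2$).
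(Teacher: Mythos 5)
Your proof is correct, but it follows a different route from the paper's. The paper proves the upper bound $|\mathcal W(x,y)|\leq D\dist_2(x,y)+2D$ by invoking Corollary~\ref{cor:pencil} (the Dilworth-based chain-extraction result) to find a chain $\mathcal C\subseteq\mathcal W(x,y)$ of size $\geq\lfloor|\mathcal W(x,y)|/D\rfloor$, and then using a CAT(0)-distance lower bound between distinct disjoint hyperplanes; this is deliberate, since the lemma sits in the section whose purpose is to illustrate the Ramsey/Dilworth machinery. You instead derive the global bi-Lipschitz comparison $\dist_2\leq\dist\leq\sqrt{D}\,\dist_2$ directly from the per-cube inequality $\ell_2\le\ell_1\le\sqrt{n}\,\ell_2$, then pass to nearest vertices and use the exact identity $|\mathcal W(v,w)|=\dist(v,w)$. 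Both proofs share the vertex-approximation step (and the paper's lower-bound argument, which introduces a combinatorial geodesic between cubes and uses $\dist_2\le\dist$, is essentially the same as yours), but your upper bound avoids Dilworth entirely and yields the slightly better multiplicative constant $\sqrt{D}$ in place of $D$. One small thing worth making explicit in your first step: the decomposition of a CAT(0) geodesic segment into finitely many arcs, each lying in a single closed cube, is not completely free --- it is a consequence of Bridson's theorem on geodesics in $M_\kappa$-complexes with finitely many shapes (the same ingredient the paper implicitly uses, via its citation of~\cite{Bridson:thesis}, to guarantee that $\dist_2$ is a well-defined geodesic metric). Alternatively, the inequality $\dist\le\sqrt{D}\,\dist_2$ can be obtained without this by noting that both $\dist$ and $\dist_2$ are defined as path metrics via infima over the same family of rectifiable paths, and the local comparison already bounds the $\ell_1$-length of any such path by $\sqrt{D}$ times its $\ell_2$-length. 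Your observation that $|\mathcal W(x,v_x)|\le D$ must come from the dimension of the ambient cube, not from $\dist(x,v_x)$, is exactly the right point to be careful about, and you handled it correctly.
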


\begin{proof}
 Let $\gamma$ be a CAT(0) geodesic joining $x$ to $y$.  Note that a hyperplane $h$ intersects $\gamma$ in a 
single point if and only if $h\in\mathcal W(x,y)$ (any other hyperplane either contains $\gamma$ or is 
disjoint from $\gamma$).  By Corollary~\ref{cor:pencil}, $\mathcal W(x,y)$ contains a chain $\mathcal C$ of 
hyperplanes with cardinality at least $\lfloor|\mathcal W(x,k)|/D\rfloor$.  If $h,v\in\mathcal C$ are 
distinct, then $\dist_2(h,v)\geq 1$, so $|\gamma|\geq |\mathcal C|-1$.  Hence $$|\mathcal W(x,y)|\leq 
D\dist_2(x,y)+2D.$$

To prove the other inequality, let $c_x,c_y$ be cubes containing $x,y$ respectively.  Then the set $\mathcal 
W(c_x,c_y)$ of hyperplanes separating $c_x,c_y$ satisfies $|\mathcal W(c_x,c_y)|\leq|\mathcal 
W(x,y)|$.  Now, fix a combinatorial geodesic $\alpha:[0,L]\to X$ from $c_x$ to $c_y$ having length $|\mathcal 
W(c_x,c_y)|$.  Then $$\dist_2(x,y)\leq 
\dist_2(x,\alpha(0))+\dist_2(y,\alpha(L))+\dist_2(\alpha(0),\alpha(L)).$$  Just because $d_2$ is a 
path-metric, and edges of $X$ have $d_2$--length $1$, 
$$\dist_2(\alpha(0),\alpha(L))\leq\dist(\alpha(0),\alpha(L))=|\mathcal W(c_x,c_y)|.$$  Since $x,\alpha(0)$ lie 
in a common cube $c_x$, we have $\dist_2(x,\alpha(0)),\dist_2(y,\alpha(L))\leq \sqrt{D}$.  So 
$$\dist_2(x,y)-2\sqrt{D}\leq 
|\mathcal W(x,y)|,$$ as required.
\end{proof}

We can also produce large facing tuples, given a bound on chains.

\begin{defn}\label{defn:H_r}
 Let $X$ be a CAT(0) cube complex, let $x_0\in X^{(0)}$, and let $B_R(x_0)$ be the set of vertices $y\in X$ 
with $\dist(x_0,y)\leq R$.  Then $\mathcal H_R$ denotes the set of hyperplanes $h$ such that $h$ crosses 
$B_R(x_0)$.
\end{defn}

Observe:

\begin{lem}\label{lem:chain_H_R}
 Any chain in $\mathcal H_R$ has cardinality at most $2R$.
\end{lem}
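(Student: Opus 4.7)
The plan is to exploit the nested halfspace structure of a chain together with the fact that any hyperplane in $\mathcal H_R$ has a witness vertex in $B_R(x_0)$ on each side.

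Suppose $\{h_1,\ldots,h_n\}\subset\mathcal H_R$ is a chain. I would first choose associated halfspaces with $\OL h_1\subsetneq\OL h_2\subsetneq\cdots\subsetneq\OL h_n$, as in the remark following Definition~\ref{defn:chain}. Since $h_1\in\mathcal H_R$, the hyperplane $h_1$ separates two vertices of $B_R(x_0)$, so I can pick a vertex $u\in\OL h_1\cap B_R(x_0)$. Similarly, since $h_n\in\mathcal H_R$, I can pick a vertex $w\in\OR h_n\cap B_R(x_0)$.

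Now the nesting does the work. From $u\in\OL h_1\subset\OL h_2\subset\cdots\subset\OL h_n$ we get $u\in\OL h_i$ for every $i$, and from $w\in\OR h_n\subset\OR h_{n-1}\subset\cdots\subset\OR h_1$ we get $w\in\OR h_i$ for every $i$. Hence every one of the $n$ hyperplanes $h_1,\ldots,h_n$ separates $u$ from $w$.

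To finish, I recall from Section~\ref{sec:hyperplanes} that the number of hyperplanes separating two vertices equals their combinatorial distance, so $n\leq\dist(u,w)$. The triangle inequality gives $\dist(u,w)\leq\dist(u,x_0)+\dist(x_0,w)\leq 2R$, and hence $n\leq 2R$. There is no genuine obstacle here; the only thing to be careful about is getting the direction of the nesting right so that the single witness vertex $u$ (respectively $w$) lies on the correct side of \emph{every} $h_i$ simultaneously, which is exactly what having a totally ordered collection of halfspaces gives for free.
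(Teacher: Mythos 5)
Your proof is correct and is exactly the argument the paper leaves implicit (the lemma is stated with only the word ``Observe''): nest the halfspaces, find a witness vertex of $B_R(x_0)$ on the extreme side of each end of the chain, note that every hyperplane in the chain separates these two vertices, and bound their distance by $2R$ via the triangle inequality.
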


Now we can use Proposition~\ref{prop:ramsey_dilworth}:

\begin{cor}\label{cor:linear_growth}
 Let $X$ be a CAT(0) cube complex with dimension $D<\infty$, let $x_0\in X^{(0)}$, and for each $R$, let 
$\mathcal H_R$ be as in Definition~\ref{defn:H_r}. Let $N\in\naturals$.  Then one of the following holds:
\begin{itemize}
 \item There exists $R_0$ such that for all $R\geq R_0$, there is a facing $(N+1)$--tuple in $\mathcal H_R$.
 \item $(X,\dist)$ admits an isometric embedding in the standard tiling of $\Euclidean^L$ by $L$--cubes, where 
$L\leq K(D,N)$.
\end{itemize}
In the second case, $|\mathcal H_R|$ grows at most linearly in $R$.
\end{cor}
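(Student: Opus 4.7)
The plan is to argue by contrapositive: assuming the first alternative fails, I deduce the second alternative and the linear growth bound. The first step is to translate the failure of alternative one into a global statement about $X$. Because $\mathcal H_R\subseteq \mathcal H_{R'}$ whenever $R\leq R'$, and because any finite set of hyperplanes of $X$ lies in $\mathcal H_R$ for all sufficiently large $R$ (each hyperplane crosses every $B_R(x_0)$ once $R$ is large enough to reach a pair of adjacent vertices on its two sides), failure of alternative one forces $X$ itself to contain no facing $(N+1)$-tuple.

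Next I produce a chain decomposition of all hyperplanes of $X$. Fix, for each hyperplane $h$, an associated halfspace $\OL h$, and consider the resulting poset $\widehat{\mathcal W}$ ordered by $\subseteq$. The Ramsey argument in the proof of Proposition~\ref{prop:ramsey_dilworth} is uniform: it bounds the cardinality of any $\subseteq$-antichain in $\widehat{\mathcal W}$ using only $\dimension X\leq D$ and the absence of a facing $(N+1)$-tuple, so every antichain in $\widehat{\mathcal W}$ has cardinality at most $K(D,N)$. Applying the version of Dilworth's theorem for posets with uniformly bounded antichain size (obtained from the finite case of Proposition~\ref{prop:ramsey_dilworth} by a standard compactness argument) partitions $\widehat{\mathcal W}$ into totally ordered subsets $\widehat{\mathcal W}_1,\ldots,\widehat{\mathcal W}_L$ with $L\leq K(D,N)$; equivalently, we obtain a decomposition $\mathcal W=\mathcal C_1\sqcup\cdots\sqcup\mathcal C_L$ of the hyperplane set into $L$ chains in the sense of Definition~\ref{defn:chain}.

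Using these chains as ``coordinate directions'', I build the embedding. Define $\phi=(\phi_1,\ldots,\phi_L)\colon X^{(0)}\to \integers^L$ by $\phi_i(v)=|S_v^i|-|T_v^i|$, where $S_v^i=\{h\in\mathcal C_i : v\in\OL h,\ x_0\notin\OL h\}$ and $T_v^i=\{h\in\mathcal C_i : x_0\in\OL h,\ v\notin\OL h\}$. Since the up-sets $\{h\in\mathcal C_i : v\in\OL h\}$ and $\{h\in\mathcal C_i : x_0\in\OL h\}$ in the linearly ordered chain $\widehat{\mathcal W}_i$ are themselves $\subseteq$-comparable, one of $S_v^i, T_v^i$ is empty and so $\phi_i(v)\in\integers$. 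A direct computation shows that $|\phi_i(v)-\phi_i(w)|$ equals the number of hyperplanes in $\mathcal C_i$ separating $v$ from $w$, so summing over $i$ gives $\|\phi(v)-\phi(w)\|_1=\dist(v,w)$. Since two hyperplanes in the same chain have $\subseteq$-comparable halfspaces and therefore cannot cross, any hyperplanes intersecting a common cube of $X$ lie in distinct chains, and $\phi$ thus extends cubically to an isometric embedding of $X$ into the standard tiling of $\Euclidean^L$ by $L$-cubes.

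The linear growth bound is now immediate: for each $R$, the set $\mathcal H_R\cap\mathcal C_i$ is a chain contained in $\mathcal H_R$ and so has cardinality at most $2R$ by Lemma~\ref{lem:chain_H_R}; summing over the $L\leq K(D,N)$ chains gives $|\mathcal H_R|\leq 2RK(D,N)$. The main conceptual obstacle is the passage from the finite Ramsey--Dilworth bound in Proposition~\ref{prop:ramsey_dilworth} to a \emph{global} chain decomposition of $\mathcal W$; this step can be avoided if one only wants the growth bound (apply the proposition directly to each finite $\mathcal H_R$), but it is genuinely needed to assemble the embedding into $\Euclidean^L$ all at once.
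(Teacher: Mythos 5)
Your argument is correct, and it reaches the embedding by a genuinely different route from the paper's. The paper applies the finite Ramsey--Dilworth statement (Proposition~\ref{prop:ramsey_dilworth}) separately to each finite set $\mathcal H_R$, turns the resulting chains into restriction quotients (each a segment), and so obtains an isometric embedding $B_R(x_0)\to[-R,R]^L$ for every $R$; it then assembles these into a global embedding of $X$ via K\"onig's lemma on the locally finite graph of compatible finite embeddings. You instead push the compactness up one level: having observed (correctly) that the Ramsey step in Proposition~\ref{prop:ramsey_dilworth} bounds $\subseteq$--antichains using only $D$ and the absence of facing $(N+1)$--tuples, with no finiteness of $\mathcal W$, you invoke the bounded-width infinite Dilworth theorem to get a single global decomposition $\mathcal W=\mathcal C_1\sqcup\cdots\sqcup\mathcal C_L$ into chains, and then write the embedding down explicitly via the signed separation counts $\phi_i$. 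What your route buys is an explicit coordinate formula and no appeal to restriction quotients or K\"onig's lemma; what it costs is the infinite Dilworth theorem, which itself rests on a compactness argument, so neither proof is strictly more elementary --- the compactness has simply migrated from the embedding step to the chain-decomposition step. Your reduction of the failure of the first alternative to the absence of facing $(N+1)$--tuples in all of $X$, and the final growth bound $|\mathcal H_R|\le 2RK(D,N)$ via Lemma~\ref{lem:chain_H_R}, coincide with the paper. If you write this up, the one computation worth spelling out is that in $|\phi_i(v)-\phi_i(w)|=\#\{h\in\mathcal C_i: h\text{ separates }v,w\}$ the per-hyperplane contributions all carry the same sign; this follows from the monotonicity of $j\mapsto [v\in\OL h_j]$ along the totally ordered chain $\OL h_1\subsetneq\OL h_2\subsetneq\cdots$ of halfspaces in $\mathcal C_i$.
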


\begin{proof}
Suppose that the first conclusion fails.  Then $N$ bounds the cardinality of facing tuples in 
$\mathcal H_R$ for all $R\geq 0$.  Let $\widehat{\mathcal H_R}$ be a set of halfspaces associated to 
hyperplanes in $\mathcal H_R$, with one halfspace per hyperplane.  By the proof of 
Proposition~\ref{prop:ramsey_dilworth}, 
$\widehat{\mathcal H_R}$ can be partitioned into $L\leq K(D,N)$ sets 
$\widehat{H_R}^1,\ldots,\widehat{H_R}^L$, each totally ordered by inclusion.  

Hence $\mathcal H_R$ can be partitioned into $L$ chains $\mathcal H_R^1,\ldots,\mathcal H_R^L$.  For $i\leq 
R$, let $f_i:X\to X_i$ be the \emph{restriction quotient} (see~\cite[Section 2]{CapraceSageev}) obtained by 
cubulating the wallspace $(X^{(0)},\mathcal H_R^i)$.  Since $\mathcal H_R^i$ is a finite chain, $X_i$ is 
isomorphic to the tiling of a finite segment by $1$--cubes.

Taking the product gives a map $f':X\to \prod_iX_i$ which is an isometric embedding on $B_R(x_0)$.  Since 
$\diam(X_i)\leq 2R$, by Lemma~\ref{lem:chain_H_R}, we can isometrically embed $X_i$ in the cubical tiling of 
$[-R,R]$, take the product over $i$ of these embeddings, and compose with $f'$ to get a (combinatorial) 
isometric embedding $f_R:B_R(x_0)\to [-R,R]^L$.

For each $R$, there are finitely many such embeddings, and for $R=0$, the embedding is unique.  Let $G$ be the 
graph whose vertex set is the set of isometric embeddings $f_R:B_R(x_0)\to[-R,R]^L$ for $R\ge 0$.  Join 
vertices $f_R$ and $f_{R+1}$ by an edge if $f_{R+1}$ restricts to $f_R$ on $B_R(x_0)$.  Then $G$ is a 
connected, locally finite graph.  So, by K\"onig's lemma~\cite{Konig}, we 
obtain a combinatorial isometric embedding of $X$ in $\Euclidean^L$, as required.

Finally, in this situation, Lemma~\ref{lem:chain_H_R} and Proposition~\ref{prop:ramsey_dilworth} combine to 
prove that $|\mathcal H_R|\leq 2RK(D,N)$ for all $R$.
\end{proof}

Using the above, we recover a Tits alternative for cubulated groups; see 
also~\cite{CapraceSageev,SageevWise:tits}.

\begin{prop}\label{prop:tits_1}
 Let the group $G$ act essentially on the $D$--dimensional CAT(0) cube complex $X$.  Suppose 
that the action of $G$ is either cocompact and $X$ is locally finite, or $G$ has no global fixed point in 
$\boundary X$.  Then either $G$ contains a nonabelian free group, or $X$ admits a combinatorial isometric 
embedding in $\Euclidean^L$, where $L\leq K(D,3)$.
\end{prop}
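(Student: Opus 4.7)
The plan is to apply Corollary~\ref{cor:linear_growth} with $N=3$. If the second alternative of that corollary holds, we obtain the desired combinatorial isometric embedding of $X$ in $\Euclidean^L$ with $L \leq K(D,3)$, so we may assume instead that there exists $R$ such that $\mathcal H_R$ contains a facing $4$--tuple $\{h_1,h_2,h_3,h_4\}$, with halfspaces $h_1^+,h_2^+,h_3^+,h_4^+$ that are pairwise disjoint. In this case I aim to produce a rank--$2$ free subgroup of $G$ via ping-pong applied to two isometries obtained from the Double Skewering Lemma.

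Since $h_1^+\cap h_2^+=\emptyset$, we have $h_1^+\subsetneq h_2^-$, so Lemma~\ref{lem:double_skewering_lemma} (applied to the disjoint pair $h_1,h_2$; both hypothesis cases of the proposition are covered by that lemma) produces a hyperbolic element $g_1\in G$ with $g_1 h_2^-\subsetneq h_1^+$. The same lemma applied to $h_3,h_4$ yields a hyperbolic $g_2\in G$ with $g_2 h_4^-\subsetneq h_3^+$. Iterating the contractions and taking complements gives, for every $n\geq 1$,
\[
g_1^n h_2^-\subsetneq h_1^+, \quad g_1^{-n} h_1^-\subsetneq h_2^+, \quad g_2^n h_4^-\subsetneq h_3^+, \quad g_2^{-n} h_3^-\subsetneq h_4^+.
\]

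Now set $X_1=h_1^+\cup h_2^+$ and $X_2=h_3^+\cup h_4^+$. These subsets are disjoint by the facing-tuple property and nonempty by essentiality. Pairwise disjointness of the four halfspaces also yields $h_3^+\cup h_4^+\subset h_1^-\cap h_2^-$ and $h_1^+\cup h_2^+\subset h_3^-\cap h_4^-$, and combining these with the four displayed inclusions gives $g_1^n X_2\subset X_1$ and $g_2^n X_1\subset X_2$ for every nonzero integer $n$. The classical Ping-Pong Lemma then concludes that $\langle g_1,g_2\rangle$ is a nonabelian free subgroup of $G$, completing the proof.

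The substantive content has all been absorbed into earlier results: Corollary~\ref{cor:linear_growth} supplies the facing $4$--tuple, and Lemma~\ref{lem:double_skewering_lemma} handles both hypothesis alternatives uniformly. The only genuine care needed is in the ping-pong bookkeeping — choosing the partition of the $4$--tuple into the two pairs $\{h_1,h_2\}$ and $\{h_3,h_4\}$ so that both $g_1^{\pm n}$ land inside $X_1$ and both $g_2^{\pm n}$ land inside $X_2$ — which is precisely what the facing-tuple hypothesis is designed to provide, so I do not anticipate any serious obstacle.
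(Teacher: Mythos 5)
Your proposal is correct and follows essentially the same route as the paper: invoke Corollary~\ref{cor:linear_growth} with $N=3$ to either get the embedding or a facing $4$--tuple, split the tuple into two pairs, apply the Double Skewering Lemma to each pair, and conclude by ping-pong. The only (immaterial) difference is that you run the two-set version of ping-pong with $X_1=h_1^+\cup h_2^+$ and $X_2=h_3^+\cup h_4^+$, whereas the paper phrases it with the four disjoint halfspaces directly.
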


\begin{proof}
 By Corollary~\ref{cor:linear_growth}, either the second conclusion holds, or there exists $R$ such that 
$\mathcal H_R$ contains a facing $4$--tuple $a,b,c,d$. Let $\OL a$ be the 
halfspace associated to $a$ that is disjoint from $b,c,d$, and define $\OL b,\OL c,\OL d$ analogously.  Let 
$\OR a=X-\OL a$, and define $\OR b,\OR c,\OR d$ analogously.

Apply the Double Skewering Lemma to find $g,h\in G$ such that $g\OR b\subsetneq\OL a$ and $h\OR d\subsetneq\OL 
c$.  

Hence $g(X-\OL b)\subset \OL a$ and $g^{-1}(X-\OL a)\subset\OL b$.  Similarly, $h(X-\OL d)\subset\OL c$ and $h^{-1}(X-\OL c)\subset \OL b$.  So, applying the ping-pong lemma to the four disjoint sets 
$\OL a,\OL b,\OL c,\OL d$ and the elements $g,h$ shows that $\langle g,h\rangle\cong F_2$.
% 
% Hence $g(X-\OL b)\subset \OR b$ and $g^{-1}(X-\OR b)\subset \OL b$.  The same holds with $h$ replacing 
% $g$ and $d$ replacing $b$ and $c$ replacing $a$.  
% 
% Summarising, we have four pairwise disjoint sets 
% 
% Apply the ping-pong lemma.
\end{proof}

\begin{cor}\label{cor:tits_alternative}
 Let the group $G$ act properly and cocompactly on the $D$-dimensional CAT(0) cube complex $X$.  Then $G$ 
contains a nonabelian free group or $G$ is virtually finite-rank abelian.
\end{cor}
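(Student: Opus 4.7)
The plan is to reduce Corollary~\ref{cor:tits_alternative} to Proposition~\ref{prop:tits_1} by first passing to the $G$-essential core, and then to combine the second alternative of that proposition with standard growth-theoretic results. We may assume $G$ is infinite, since finite groups are trivially virtually abelian; properness and cocompactness then force $X$ to be locally finite and $G$ to be finitely generated. By the Caprace--Sageev essential-core construction~\cite{CapraceSageev} (after splitting off any Euclidean factor on which $G$ acts by translations, should $G$ fix a point in $\partial X$), we obtain a nonempty $G$-invariant convex subcomplex $Y \subseteq X$ on which $G$ acts essentially; the $G$-action on $Y$ remains proper and cocompact, $Y$ inherits local finiteness, and $\dim Y \leq D$.

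Next, I would apply Proposition~\ref{prop:tits_1} to the action of $G$ on $Y$, invoking its ``locally finite and cocompact'' hypothesis. If $G$ contains a nonabelian free subgroup, we are done. Otherwise, $Y$ admits a combinatorial isometric embedding into the standard cubical tiling of $\Euclidean^L$ for some $L \leq K(D,3)$. The embedding immediately bounds the vertex-growth of $Y$: the combinatorial $R$-ball about any vertex of $Y$ contains at most $(2R+1)^L$ vertices, so $Y$ has polynomial vertex-growth of degree at most $L$.

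By the \v{S}varc--Milnor lemma, $G$ is quasi-isometric to the locally finite complex $Y$, and hence has polynomial word-growth of degree at most $L$. Gromov's polynomial growth theorem then yields that $G$ is virtually nilpotent, and a standard CAT(0) argument (see, e.g.,~\cite{BH}) promotes ``virtually nilpotent'' to ``virtually abelian'' for any group acting properly and cocompactly on a finite-dimensional CAT(0) cube complex, which completes the proof. The main obstacle I expect is the essential-core reduction: one must verify that the construction preserves nonemptiness together with properness, cocompactness, and local finiteness of the action, which requires some care because $G$ may a priori fix points in $\partial X$, necessitating the preliminary splitting alluded to above. The remainder of the argument consists of well-known consequences of growth and CAT(0) geometry.
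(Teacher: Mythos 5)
Your proof is correct and takes essentially the same route as the paper: pass to the $G$--essential core via Caprace--Sageev, apply Proposition~\ref{prop:tits_1}, and in the non-free case deduce polynomial growth from the combinatorial embedding into $\Euclidean^L$, then invoke Gromov's theorem and the CAT(0) promotion of virtually nilpotent to virtually abelian. The concern about fixed points in $\boundary X$ and a preliminary Euclidean splitting is an unnecessary detour here: \cite[Proposition 3.5]{CapraceSageev} applies directly to cocompact actions to produce the essential core, and Proposition~\ref{prop:tits_1} is then invoked via its cocompact-and-locally-finite branch, both of which are automatic from properness and cocompactness.
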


\begin{proof}
Using~\cite[Proposition 3.5]{CapraceSageev}, we can assume that the 
action of $G$ on $X$ is essential.  By Proposition~\ref{prop:tits_1}, either $G$ contains a nonabelian free 
group, or $X$ isometrically embeds in $\Euclidean^L$ for some $L$.  Observe that $G$ is finitely generated, and 
the composition of an orbit map $G\to X$ with the embedding $X\to\Euclidean^L$ shows that $G$ has polynomial 
growth.  Thus $G$ is virtually nilpotent~\cite{Gromov:polynomial} and hence virtually 
abelian~\cite[Theorem II.7.8]{BH}.
\end{proof}

\begin{rem}
The above proof is similar 
to the more general proof in~\cite{CapraceSageev}, but focused on what we hope could be a more 
``quantitative'' way to find the necessary facing $4$--tuple in the cocompact case (they use the ``Flipping 
lemma'' to find a facing $4$--tuple).

 One can get the same conclusion without cocompactness, as long as there is a bound on the order of finite 
subgroups of $G$~\cite{SageevWise:tits}.  One can combine the above argument with a result of 
Caprace from~\cite{CFI} to obtain the same conclusion as in~\cite{SageevWise:tits}, but the details are 
tangential to our goal here.  Instead, we wish to highlight that, by understanding the growth of 
$\mathcal H_R$, and by 
cooking up an ``effective'' version of the Double-Skewering Lemma, one could hope to use 
Corollary~\ref{cor:linear_growth} to prove effective versions of the Tits alternative.  There is some 
discussion of this in the last section of this paper.  
\end{rem}

\section{Loxodromic isometries of $\contact X$}\label{sec:loxodromic}
The main theorem in this section is Theorem~\ref{thm:loxo}. This restates results in~\cite[Section 
5]{Hagen:boundary}; it can also be assembled from more recent results of 
Genevois~\cite{Genevois:NYJM,Genevois:acylindrical,Genevois:rank_one}, who has significantly extended the 
study of rank-one isometries of cube complexes and of contact graphs and related objects.  We give a 
simplified proof using gates and not mentioning disc diagrams.

\begin{thm}\label{thm:loxo}
 Let $X$ be a finite-dimensional, locally finite CAT(0) cube complex.  Let $g\in\Aut(X)$ be a 
combinatorially hyperbolic 
isometry of $X$.  Then the following 
are equivalent:
\begin{enumerate}
 \item \label{item:loxo} $g$ acts on $\contact X$ as a loxodromic isometry.
\item \label{item:unbounded_orbit} $\langle g\rangle$ has an unbounded orbit in $\contact X$.
 \item \label{item:rank_one_no_hyp} $g$ acts on $X$ as a rank-one isometry, and for all $n>0$ and 
all hyperplanes 
$h$ of $X$, we have $g^nh\neq h$.
\item \label{item:projection} For all $x\in X^{(0)}$ there exists $R=R(g,x)$ such that for all 
hyperplanes $h$ and all $n\in\integers$, we have $\dist(\gate_h(x),\gate_h(g^nx))\leq R$.
\end{enumerate}
Moreover, if $\langle g\rangle$ has a bounded orbit in $\contact X$, then for any hyperplane $h$ intersecting 
an axis of $g$, the orbit $\langle g\rangle\cdot h$ has diameter at most $3$ in $\contact X$.
\end{thm}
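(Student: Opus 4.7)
The plan is to establish the cycle $(1) \Rightarrow (2) \Rightarrow (3) \Rightarrow (4) \Rightarrow (1)$, picking up the moreover clause inside $(2) \Rightarrow (3)$. The first arrow is immediate from the definition of loxodromic.

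For $(2) \Rightarrow (3)$ I argue the contrapositive. When $g^n h = h$ for some $n > 0$, the orbit of $h$ is finite; moreover, a short argument using gate maps shows that a combinatorial axis $\gamma$ of $g$ must fellow-travel $h$, placing every hyperplane meeting $\gamma$ within small $\contact X$-distance of $h$. When $g$ is not rank-one, a CAT(0) axis lies in an embedded Euclidean half-flat, and Lemma~\ref{lem:rank_one} exhibits inside the convex hull of $\gamma$ a product-like strip whose transverse hyperplanes all meet $\gamma$ and are pairwise $\contact X$-close. Either way, every $\langle g \rangle$-orbit in $\contact X$ is bounded (contradicting $(2)$), and a careful count pins the diameter down to $3$ for the orbit of any hyperplane meeting $\gamma$, yielding the moreover clause.

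For $(3) \Rightarrow (4)$, fix $x_0 \in X^{(0)}$ and a hyperplane $h$. By Lemma~\ref{lem:gate_sep}, $\dist(\gate_h(x_0), \gate_h(g^n x_0))$ equals the number of hyperplanes $a$ that cross $h$ and separate $x_0$ from $g^n x_0$; each such $a$ must cross the segment of $\gamma$ between the gates of $x_0$ and $g^n x_0$. Rank-one-ness together with Lemma~\ref{lem:rank_one} bounds the pairwise-crossing families of hyperplanes meeting any fundamental domain of $\gamma$, while $g^j h \ne h$ for $j \ne 0$, combined with local finiteness, caps the number of $g$-translates of a single hyperplane that can contribute. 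These combine to yield the required uniform $R = R(g, x_0)$.

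For $(4) \Rightarrow (1)$, take $h_0$ to be a hyperplane meeting an axis of $g$, and suppose for contradiction that $\dist_{\contact X}(h_0, g^{n_k} h_0) \le M$ along a subsequence $n_k \to \infty$. Lemma~\ref{lem:single_point} then forces gate images on $h_0$ from the convex hulls around $g^{n_k} x_0$ to be large, and applying the symmetry $\diam(\gate_Y(Z)) = \diam(\gate_Z(Y))$ with $Y = h_0$ drives $\dist(\gate_{h_0}(x_0), \gate_{h_0}(g^{n_k} x_0))$ to infinity, contradicting $(4)$. I expect the main obstacle to be $(3) \Rightarrow (4)$: Lemma~\ref{lem:rank_one} must be used to bound projections uniformly to \emph{every} hyperplane, not just those near $\gamma$, and it is precisely this globality that working with gates (rather than disc diagrams as in~\cite{Hagen:boundary}) is meant to handle.
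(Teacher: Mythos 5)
Your cycle $(1)\Rightarrow(2)\Rightarrow(3)\Rightarrow(4)\Rightarrow(1)$ matches the paper's, and the first two arrows are essentially the paper's argument. Where it goes wrong is $(4)\Rightarrow(1)$, and $(3)\Rightarrow(4)$ is also not yet a proof.

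For $(4)\Rightarrow(1)$, the logic of Lemma~\ref{lem:single_point} is backwards. That lemma says: if $\dist_{\contact X}(v,h)>2$ then $\gate_v(h)$ is a point. The useful contrapositive is that a large gate image forces small $\contact X$--distance; it does not say that small $\contact X$--distance forces large gate images. Indeed it is perfectly possible to have $\dist_{\contact X}(h_0,g^{n}h_0)\le 2$ for all $n$ while $\gate_{h_0}(g^{n}h_0)$ is a single point for all $n$ and $\dist(\gate_{h_0}(x_0),\gate_{h_0}(g^n x_0))$ is bounded (take $X=T\times\reals$ for a tree $T$, $g$ translating the $\reals$ factor, $h_0$ a hyperplane of the $\reals$ factor --- the violation of item~(4) occurs for the tree hyperplanes, not for $h_0$). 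So the hyperplane at bounded $\contact X$--distance from its $g$--translates is not in general the hyperplane witnessing the failure of~(4), and your contradiction does not materialise. The paper instead proves $(4)\Rightarrow(1)$ directly using Lemma~\ref{lem:hierarchy}: pick $x\in\neb(h)$, build a geodesic $\gamma=\gamma_1\cdots\gamma_k$ from $x$ to $g^nx$ subordinate to a $\contact X$--geodesic $h=h_1,\ldots,h_k=g^nh$, with each $|\gamma_i|\le\dist(\gate_{\neb(h_i)}(x),\gate_{\neb(h_i)}(g^nx))\le R$ by~(4); then $k\ge \dist(x,g^nx)/R\ge \tau n/R$, so $\dist_{\contact X}(h,g^nh)=k-1$ grows linearly in $n$. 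Your proposal never invokes Lemma~\ref{lem:hierarchy}, and some ingredient of that kind is indispensable here.

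For $(3)\Rightarrow(4)$, the description is too loose to assess: ``bounds the pairwise-crossing families of hyperplanes meeting any fundamental domain of $\gamma$'' and ``caps the number of $g$--translates of a single hyperplane'' do not by themselves yield a uniform bound over \emph{all} hyperplanes $h$, which is the point you correctly identify as the difficulty. The paper's route is more concrete: let $Y$ be the cubical convex hull of a combinatorial axis $A$; by Lemma~\ref{lem:rank_one}, $Y\subseteq\neb_{R_0}(A)$; then one shows $\diam(\gate_Y(h))\le R_1$ uniformly in $h$ by a pigeonhole argument --- replace $h$ by a nearer hyperplane if necessary so that $\neb(h)$ meets $Y$, translate by $g$ to land in a fixed $R_0$--ball around a base vertex $a\in A$, use local finiteness to reduce to finitely many $h$, and then deduce that an unbounded $\gate_Y(h)$ would force $A$ to spend a ray in $\neb_{R_0}(\neb(h))$, which (again by local finiteness and the pigeonhole principle) gives $g^ih=h$ for some $i\neq 0$, contradicting~(3). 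The uniform bound on arbitrary $x$ then follows from the triangle inequality and $1$--Lipschitzness of $\gate_h$.

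The moreover clause is fine in spirit but you should note that the ``$g^nh=h$'' branch is handled separately from the ``not rank one'' branch: in the former case the bound comes from the axis of $g^n$ lying in $\neb(h)$ (giving diameter $\le 2$ for orbits of hyperplanes crossing the axis), while the $\le 3$ bound arises in the not-rank-one case from Lemma~\ref{lem:rank_one}.
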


The lemmas supporting the proof of Theorem~\ref{thm:loxo} can be found in Section~\ref{subsec:supporting_lemmas} below.  Recall that to prove that $g\in G$ acts on $\contact X$ loxodromically, we 
need to prove that for some (hence any) hyperplane $h$, the quantity $\dist_{\contact X}(h,g^nh),n\in\integers$ is bounded below by a linear function of $|n|$.

\begin{proof}[Proof of Theorem~\ref{thm:loxo}]
 The implication \eqref{item:loxo}$\implies$\eqref{item:unbounded_orbit} is immediate.
 
 \textbf{The implication \eqref{item:unbounded_orbit}$\implies$\eqref{item:rank_one_no_hyp}:} Suppose that 
$\langle g\rangle$ has an unbounded orbit in $\contact X$.  So, $g^nh\neq h$ for all hyperplanes $h$ and all 
$n>0$.  It remains to check that $g$ is rank-one.  Since 
$g$ does not have a power stabilising a hyperplane, no axis of $g$ can lie in a neighbourhood of a hyperplane, 
since $X$ is locally finite.  Suppose that $g$ is not rank-one and let $A$ be a combinatorial geodesic axis 
for $g$.  Lemma~\ref{lem:rank_one} below implies that for all $N>0$, there are edges $e_N,f_N$ of $A$ such 
that $\dist(e_N,f_N)>N$ and $\dist_{\contact X}(\pi(e_N),\pi(h_N))=1$, since by the lemma the hyperplanes 
$h_N,v_N$ 
respectively dual to $e_N,f_N$ intersect.  Now, if $a,b$ are arbitrary hyperplanes intersecting $A$, we can 
choose $e_N,f_N$ so that the subpath of $A$ between $e_N,f_N$ contains the edges dual to $a$ and $b$.  Now, 
each of $a,b$ must cross either $h_N$ or $v_N$, so $\dist_{\contact X}(a,b)\leq 3$.  Hence $\diam(\pi(A))\leq 
3$, i.e. $\langle g\rangle$ has a bounded orbit in $\contact X$, a contradiction. 

For \textbf{the implication \eqref{item:rank_one_no_hyp}$\implies$\eqref{item:projection}}, assume that $g$ is 
rank-one and has no power stabilising a hyperplane.  Fix a combinatorial geodesic axis $A$ of $g$ and let $Y$ 
be its 
convex hull. By Lemma~\ref{lem:rank_one}, 
there exists $R_0<\infty$ such that $Y\subseteq\neb_{R_0}(A)$.  We will show that there exists $R_1$ such that 
$\diam(\gate_Y(h))=\diam(\gate_h(Y))\leq R_1$ for all hyperplanes $h$.  From this, we get 
assertion~\eqref{item:projection} as 
follows: let $x\in X^{(0)}$ and let $y=\gate_Y(x)$.  For any hyperplane $h$ and any $n\in\integers$, we have 
$$\dist(\gate_h(x),\gate_h(g^nx))\leq 
\dist(\gate_h(y),\gate_h(g^ny))+\dist(\gate_h(x),\gate_h(y))+\dist(\gate_h(g^nx),\gate_h(g^ny)),$$ by the 
triangle inequality.  Since $\gate_h$ is $1$--lipschitz, 
$\dist(\gate_h(x),\gate_h(y))\leq\dist(x,y)=\dist(x,Y)$ and 
$$\dist(\gate_h(g^nx),\gate_h(g^ny))\leq \dist(g^nx,g^ny)=\dist(x,y)=\dist(x,Y).$$

Hence $\dist(\gate_h(x),\gate_h(g^nx))\leq R_1+2\dist(x,Y)$, and \eqref{item:projection} holds with 
$R(g,x)=R_1+2\dist(x,Y)$.  So, it remains to produce $R_1$.

Suppose that no such $R_1$ exists, so that for all $N>0$, there exists a hyperplane $h_N$ with 
$\diam(\gate_Y(h_N))>N$.  Fix a base $0$--cube $a\in A$.  

Now, if $h$ is a hyperplane separating $h_N$ from 
$Y$, we have $\diam(\gate_Y(h_N))\leq\diam(\gate_Y(h))$, and 
we can replace $h_N$ by $h$.  Hence we can assume that no hyperplane separates $h_N$ from $Y$.  Thus 
$\neb(h_N)$ contains a point $x_N$ lying in $Y$.  Note that $\dist(x_N,A)\leq R_0$.  

So, by translating by an appropriate power of $g$ and enlarging $R_0$ by an amount depending on $g$, we can 
assume $x_N$ satisfies $\dist(a,x_N)\leq R_0$ for the base $0$--cube $a\in A$ chosen above.  Since $X$ is locally finite, only finitely 
many hyperplanes intersect the $R_0$--ball about $a$.  

So $\{h_N\}_{N>0}$ is finite, whence there exists a 
hyperplane $h$ such that $\gate_Y(h)$ is unbounded and $a$ is $R_0$--close to $h$.  Now, 
since $Y\cap\neb(h)\neq\emptyset$, we have $\gate_Y(\neb(h))=Y\cap\neb(h)$.  Thus $Y\cap\neb(h)$ is 
unbounded. Now, $Y\subset\neb_{R_0}(A)$.  Hence $A$ has a sub-ray $A'$ lying in the $R_0$--neighbourhood of 
$\neb(h)$.  In other words, for all $n\ge0$ (say), we have $\dist(h,g^na)\leq R_0$.  Hence, for all $n$ and 
all $0\leq i\leq n$, we have $\dist(g^ih,g^na)\leq R_0$.  Let $K$ 
be the number of hyperplanes crossing the $R_0$--ball about $a$.  Then for $n>K$, the list $h,gh,\cdots,g^nh$ 
must contain two identical elements, by the pigeonhole principle, so $h=g^ih$ for some $i\neq 0$, a 
contradiction.  This completes the proof that \eqref{item:rank_one_no_hyp}$\implies$\eqref{item:projection}.

Now \textbf{we prove \eqref{item:projection}$\implies$\eqref{item:loxo}}.  Recall that we need to 
verify that 
for each hyperplane $h$, there exists $C>0$ such that $\dist_{\contact X}(h,g^nh)\geq Cn$ for all 
$n>0$.

Let $h$ be a hyperplane and let $x\in\neb(h)$.  Fix $n>0$ and consider the vertices $x$ and $g^nx$.  By 
Lemma~\ref{lem:hierarchy} below, there exists a combinatorial geodesic $\gamma=\gamma_1\cdots\gamma_k$ joining 
$x$ to $g^nx$ and having the following properties:
\begin{itemize}
 \item there is a sequence $h=h_1,\ldots,h_k=g^nh$ of hyperplanes such that 
$\neb(h_i)\cap\neb(h_{i+1})\neq\emptyset$ for $1\leq i\leq k-1$;
\item $h=h_1,\ldots,h_k=g^nh$ is a geodesic of $\contact X$;
\item the geodesic $\gamma_i$ lies in $\neb(h_i)$ for $1\leq i\leq k$;
\item $\gamma_i$ has length at most $\dist(\gate_{\neb(h_i)}(x),\gate_{\neb(h_i)}(g^nx))$.
\end{itemize}
By \eqref{item:projection} and the fourth bullet point above, there exists $R=R(g,x)\geq 1$ such that 
$|\gamma_i|\leq R$ for all $i$.  Hence 
$k\geq \frac{1}{R}\dist(x,g^nx)$ for all $n$.  On the other hand, since $g$ is combinatorially 
hyperbolic, there exists $\tau\geq 1$ (depending on $g$) such that 
$\dist(x,g^nx)\geq \tau n$ for all $n$.  So, taking $C=\tau/2R$ completes the proof.

To conclude, we prove the \textbf{``moreover'' clause}.  Suppose that $\langle g\rangle$ has a bounded orbit 
in $\contact X$, so that by the equivalence of \eqref{item:unbounded_orbit} and \eqref{item:rank_one_no_hyp}, 
either $g^nh=h$ for some hyperplane $h$ and some $n>0$, or $g$ is not rank-one (or both).  We saw above that 
if the former does not hold, and $g$ is not rank-one, then $\langle g\rangle\cdot v$ has diameter 
at most 
$3$ in $\contact X$ for any hyperplane $v$ crossing a $g$--axis. 

If the former holds, then since $g^n$ stabilises $h$, the carrier $\neb(h)$ contains an axis $B$ of $g^n$. Any hyperplane $v$ crossing $B$ crosses $h$, whence 
$\dist_{\contact X}(v,g^jv)\leq 2$ for all $j\in\integers$.  Any $g$--axis is also a $g^n$--axis, and any two axes of $g^n$ cross the same hyperplanes.  So any hyperplane $v$ intersecting any 
$g$--axis must intersect $B$ and thus satisfy $\dist_{\contact X}(v,g^jv)\leq 2$ for all $j$, as required.
% 
% Then for all $i,j$, we have that 
% $\gate_{g^ih}(g^jh)$ is unbounded, because it contains an axis of $g^n$.  So, any hyperplane $v$ intersecting 
% a $g$--axis crosses every
% $g^ih$, whence $\dist_{\contact X}(v,g^jv)\leq 2$ for all $j$.  
\end{proof}

\begin{rem}[The local finiteness hypothesis and other proofs in the literature]\label{rem:loc_finite}
We can obtain a similar conclusion without the local finiteness hypothesis.  Specifically, 
Lemmas~\ref{lem:hierarchy} and~\ref{lem:rank_one} do not use that assumption: the former works for 
\emph{arbitrary} CAT(0) cube complexes, and the latter uses only that $X$ is finite-dimensional.  Provided $X$ 
is finite-dimensional, but with no local finiteness hypothesis, a variant of the above proof gives that 
\eqref{item:loxo},\eqref{item:unbounded_orbit},\eqref{item:projection} are all equivalent.

There are other ways to prove parts of Theorem~\ref{thm:loxo} from results in the literature.  For example, 
equivalence of \eqref{item:projection} and \eqref{item:loxo} can easily be deduced from a result of 
Genevois~\cite[Proposition 4.2]{Genevois:NYJM}.  Meanwhile, it is straightforward to show 
\eqref{item:unbounded_orbit} implies \eqref{item:projection}, and \eqref{item:loxo} implies 
\eqref{item:unbounded_orbit} is obvious.
\end{rem}

\begin{rem}[Non-equivariant versions]
One can state a non-equivariant version of Theorem~\ref{thm:loxo} about projecting geodesic rays to $\contact 
X$; see Section 2 of~\cite{Hagen:boundary}.  Here is a simple version.  Let $X$ be a CAT(0) cube 
complex (with no local finiteness or dimension assumption).  Let $\gamma:[0,\infty)\to X$ be a combinatorial 
geodesic ray.  Then $\pi\circ\gamma:[0,\infty)\to\contact X$ is a quasigeodesic if and only if there exists 
$R$ such that $\diam(\gate_{\neb(h)}(\gamma))\leq R$ for all hyperplanes $h$.  Indeed, given such a bound, 
Lemma~\ref{lem:hierarchy} implies that $\pi\circ\gamma$ uniformly fellow-travels a geodesic ray in $\contact 
X$.  On the other hand, if $\diam(\gate_{\neb(h)}(\gamma))$ is unbounded, then $\gamma$ has arbitrarily large 
subpaths projecting to stars in $\contact X$, so $\pi\circ\gamma$ is not a (parameterised) quasigeodesic.  
Such a $\pi\circ\gamma$ still has image lying at finite Hausdorff distance from a geodesic ray or segment in 
$\contact X$, again by Lemma~\ref{lem:hierarchy}.
\end{rem}

Here are some corollaries.  The first appears in~\cite[Section 5]{Hagen:boundary}, but we reproduce it since 
we will use it in 
Section~\ref{sec:strengthened_sector_lemma}.

\begin{cor}[$\contact X$ loxodromics skewer specified pairs]\label{cor:long_skewer}
Let $X$ be a finite-dimensional CAT(0) cube complex.  Let $G\to\Aut(X)$ be an essential group action.  Assume 
that one of 
the following holds: $G$ does not fix a point in $\boundary X$, or $X$ is locally finite and $G$ acts 
cocompactly. 

Suppose that $h,v$ are hyperplanes of $X$ satisfying $\dist_{\contact 
X}(h,v)>3$.  Then there exists a hyperbolic isometry $g\in G$ such that $g$ acts loxodromically on 
$\contact X$ and $v$ separates $h$ from $gh$. 
\end{cor}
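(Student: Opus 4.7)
The plan is to apply the Double Skewering Lemma~\ref{lem:double_skewering_lemma} to produce a hyperbolic $g \in G$ such that $v$ separates $h$ from $gh$, and then verify that $g$ acts loxodromically on $\contact X$ using the hypothesis $\dist_{\contact X}(h,v) > 3$ together with Theorem~\ref{thm:loxo}.

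Since $\dist_{\contact X}(h,v) > 3$, the hyperplanes $h$ and $v$ are disjoint.  Let $\OL v$ denote the halfspace of $v$ not containing $h$ and $\OR h$ the halfspace of $h$ containing $v$, so $\OL v \subsetneq \OR h$.  Applying Lemma~\ref{lem:double_skewering_lemma} yields a hyperbolic $g \in G$ with $g\OR h \subsetneq \OL v$.  From the chain $g\OR h \subsetneq \OL v \subsetneq \OR h$ we immediately deduce two facts: first, $gh$ has carrier contained in $\closure{\OL v}$, so $v$ separates $h$ from $gh$; second, $g$ skewers both $h$ (since $g\OR h \subsetneq \OR h$) and $v$ (since $g\OL v \subsetneq g\OR h \subsetneq \OL v$).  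In particular, both $h$ and $v$ cross every combinatorial axis of $g$.

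To establish loxodromicity on $\contact X$, suppose for contradiction that $\langle g\rangle$ has a bounded orbit in $\contact X$.  Two scenarios arise.  If some combinatorial axis $A$ of $g$ lies in the carrier $\neb(w)$ of a hyperplane $w$, then every hyperplane crossing $A$ is in contact with $w$, so $\dist_{\contact X}(h,w), \dist_{\contact X}(v,w) \leq 1$, hence $\dist_{\contact X}(h,v) \leq 2$, contradicting the hypothesis.  Otherwise, $A$ lies in no hyperplane carrier; if in addition $g$ were not rank-one, then Lemma~\ref{lem:rank_one} applied exactly as in the proof of the implication \eqref{item:unbounded_orbit}$\implies$\eqref{item:rank_one_no_hyp} of Theorem~\ref{thm:loxo} would force any two hyperplanes crossing $A$ to be at $\contact X$-distance at most $3$, again contradicting $\dist_{\contact X}(h,v) > 3$.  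Thus $g$ is rank-one with no axis in any hyperplane carrier, which via condition~\eqref{item:rank_one_no_hyp} of Theorem~\ref{thm:loxo} means $g$ acts loxodromically on $\contact X$, contradicting the bounded-orbit assumption.

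The main obstacle is handling both alternative hypotheses of the corollary (no global fixed point at infinity, versus locally finite and cocompact) uniformly.  In the non-locally-finite alternative one needs Remark~\ref{rem:loc_finite}'s variant proof to pass from rank-one plus no hyperplane-stabilizing power back to loxodromicity on $\contact X$, since Theorem~\ref{thm:loxo}~\eqref{item:rank_one_no_hyp}$\implies$\eqref{item:loxo} as stated uses local finiteness.  Once the contradictions in both scenarios are in hand, the element $g$ produced by double-skewering is the desired loxodromic isometry with $v$ separating $h$ from $gh$.
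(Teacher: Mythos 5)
Your overall strategy (Double Skewering to get $g$ double-skewering $h,v$, then show loxodromicity on $\contact X$) matches the paper's, but your route to loxodromicity is genuinely different, and it has real gaps as written.

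The paper's proof is two lines: having chosen $g$, it observes that $\dist_{\contact X}(h,gh)>3$ (an easy exercise from $\dist_{\contact X}(h,v)>3$ together with the fact that $v$ separates $h$ from $gh$: any $\contact X$-geodesic from $h$ to $gh$ must contain a vertex $u_i$ whose carrier is crossed by $v$, and one checks $i\leq k-1$, so $\dist_{\contact X}(h,v)\leq\dist_{\contact X}(h,gh)$), and then invokes the \emph{moreover} clause of Theorem~\ref{thm:loxo} as a black box: since $h$ crosses an axis of $g$ and $\diam(\langle g\rangle\cdot h)>3$, the orbit is unbounded. You instead argue by contradiction, re-running the case analysis from the proof of that moreover clause with $h,v$ substituted for a generic hyperplane and its $g$-translates. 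That is a legitimate alternative, and it has the virtue of not requiring the unstated inequality $\dist_{\contact X}(h,gh)>3$.

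However, your trichotomy does not match the one actually used in Theorem~\ref{thm:loxo}, and this creates gaps. You split on whether some $g$-axis lies inside a hyperplane carrier $\neb(w)$. But the cases in the theorem are governed by whether some power $g^n$ stabilises a hyperplane, equivalently (using local finiteness) whether some $g$-axis lies in a \emph{metric neighbourhood} $\neb_R(w)$. These are not the same: a $g$-axis can fellow-travel $w$ without being contained in its carrier, and conversely, $g^n w=w$ produces a $g^n$-axis in $\neb(w)$, which need not be a $g$-axis. Concretely: (i) in your second case, to invoke the ``in particular'' clause of Lemma~\ref{lem:rank_one} you need that no $g$-axis lies in any neighbourhood of a hyperplane, not merely in any carrier; (ii) in your third case, asserting that ``rank-one plus no $g$-axis in a carrier'' yields condition~\eqref{item:rank_one_no_hyp} silently assumes that no $g$-axis in a carrier implies $g^n w\neq w$ for all $n>0$ and all $w$, which does not follow without further argument. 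Both gaps are repairable by the same local-finiteness pigeonhole argument the paper uses inside the proof of Theorem~\ref{thm:loxo} (if $A\subset\neb_R(w)$, only finitely many hyperplanes $w'$ satisfy $A\subset\neb_R(w')$, so $g$ permutes them and $g^n w=w$, whence a $g^n$-axis in $\neb(w)$ crosses the same hyperplanes as $A$, forcing $\dist_{\contact X}(h,v)\leq 2$), but you would need to say this. Your closing remark about local finiteness under the first alternative hypothesis is a fair observation, and in fact it applies equally to the paper's appeal to Theorem~\ref{thm:loxo}; the clean fix in either proof is to invoke the $\contact X$-geodesic argument or the version of the theorem described in Remark~\ref{rem:loc_finite}.
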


\begin{proof}
 By the Double-Skewering Lemma, there exists a hyperbolic isometry $g\in\Aut(X)$ of 
$X$ such that $v$ separates $h$ from $gh$.  It follows that $\dist_{\contact X}(h,gh)>3$, so $g$ is loxodromic 
on $\contact X$ by Theorem~\ref{thm:loxo}.
\end{proof}

Hyperplanes $h,v$ are \emph{strongly separated} if they are disjoint and no hyperplane intersects both.  This 
notion is due to Behrstock-Charney~\cite{BehrstockCharney} and plays an important role in Caprace-Sageev's 
rank rigidity theorem~\cite{CapraceSageev}.  For generalisations and applications of this property, see, for 
example,~\cite{CFI,ChatterjiMartin,Genevois:acylindrical,Levcovitz,CharneySultan}.

\begin{cor}[Strong separation criterion]\label{cor:strong_separation}
Let $X$ be a finite-dimensional CAT(0) cube complex.  Let $G\to\Aut(X)$ be an essential group action.  Assume 
that one of 
the following holds: $G$ does not fix a point in $\boundary X$, or $X$ is locally finite and $G$ acts 
cocompactly. 

 Suppose that $h,v$ are strongly separated hyperplanes of $X$.  Then there 
exists a hyperbolic isometry $g\in G$ such that $g$ acts loxodromically on 
$\contact X$ and $v$ separates $h$ from $gh$ (i.e. $g$ double-skewers the pair $h,v$). 
\end{cor}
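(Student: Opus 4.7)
The plan is to produce $g$ via the Double Skewering Lemma and then verify loxodromy on $\contact X$ using Theorem \ref{thm:loxo}. Applying Lemma \ref{lem:double_skewering_lemma} to the disjoint hyperplanes $h, v$ with halfspaces chosen so that $\OL h \subsetneq \OL v$ produces a hyperbolic $g \in G$ satisfying $g\OL v \subsetneq \OL h \subsetneq \OL v$; in particular, $v$ separates $h$ from $gh$, which gives the separation part of the conclusion.

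It remains to show $g$ acts loxodromically on $\contact X$. By Theorem \ref{thm:loxo}, it suffices to prove $\langle g\rangle \cdot h$ is unbounded in $\contact X$, and I would in fact establish the linear bound $\dist_{\contact X}(h, g^n h) \geq n$. The crucial preliminary is that the family $\{g^i h : i \in \integers\}$ is pairwise strongly separated. Suppose some hyperplane $a$ crosses both $h$ and $g^n h$ for some $n \geq 1$. The nested inclusions $g^n \OL h \subseteq g \OL h \subsetneq g \OL v \subsetneq \OL h$ and their duals $\OR h \subsetneq g \OR v \cap g \OR h$ force any vertex of $a$ in $g^n \OL h$ to lie in $g\OL h \cap g\OL v$, and any vertex of $a$ in $\OR h$ to lie in $g\OR h \cap g\OR v$. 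Thus $a$ has vertices on both sides of $gh$ and on both sides of $gv$, so it crosses each, contradicting the strong separation of the $g$-translate $(gh, gv)$ of the original pair.

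The remaining ingredient is a general combinatorial observation: if $a, b$ are distinct hyperplanes with $\neb(a) \cap \neb(b) \neq \emptyset$, and $u$ is a third hyperplane disjoint from both satisfying $a \subseteq \OL u$ and $b \subseteq \OR u$, then any vertex in $\neb(a) \cap \neb(b)$ would be forced to lie on $u$, which is impossible since hyperplanes contain no vertices. Given any $\contact X$-path $h = a_0, a_1, \ldots, a_m = g^n h$, each of the $n-1$ intermediate hyperplanes $gh, g^2 h, \ldots, g^{n-1} h$ separates $h$ from $g^n h$; by the observation, some $a_j$ (necessarily with $1 \leq j \leq m-1$, since strong separation forbids both $a_0 = h$ and $a_m = g^n h$ from crossing or coinciding with an intermediate) must coincide with or cross it. Pairwise strong separation of the family forces these indices to be distinct, giving $m - 1 \geq n - 1$, hence $\dist_{\contact X}(h, g^n h) \geq n$. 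Thus $\langle g\rangle$ has unbounded orbit in $\contact X$, and Theorem \ref{thm:loxo} gives that $g$ is loxodromic.

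The main obstacle is the propagation of strong separation to the family $\{g^i h\}$; while ultimately a short consequence of the nested halfspaces, it requires attention to the consolidation of the various vertex configurations forced by $a$ crossing two hyperplanes in the chain. Everything else in the argument reduces to the Double Skewering Lemma and the basic combinatorial observation about carriers meeting across a separating hyperplane.
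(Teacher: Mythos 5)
Your proof is correct and follows essentially the same route as the paper: produce $g$ by Double Skewering, use strong separation of $(h,v)$ (hence of its $g$--translates) to show that any short path in $\contact X$ from $h$ to a far translate $g^nh$ would force some vertex of the path to cross both members of a translated strongly separated pair, and conclude via Theorem~\ref{thm:loxo}. The only difference is organizational: you first establish pairwise strong separation of the whole orbit $\{g^ih\}$ and deduce the linear bound $\dist_{\contact X}(h,g^nh)\geq n$, then invoke criterion~\eqref{item:unbounded_orbit}, whereas the paper shows only $\dist_{\contact X}(h,g^4h)>3$ by a finite pigeonhole and invokes the ``moreover'' clause of the theorem; your quantitative bound is marginally stronger (together with the coarse-lipschitzness of $\pi$ it certifies loxodromy directly), but the mechanism is the same.
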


\begin{rem}
 As mentioned above, a result of Genevois implies that if $v,h$ is a pair of strongly separated hyperplanes 
and $g$ is an isometry of $X$ double-skewering the pair $v,h$ (i.e. $g\OL h\subsetneq \OL v\subsetneq \OL h$), 
then $g$ is loxodromic on $\contact X$~\cite[Proposition 4.2]{Genevois:NYJM}.  Conversely, if $g$ is 
loxodromic on $\contact X$, and $h$ is a hyperplane crossing some (hence any) axis of $g$, then we can choose 
$n>0$ such that $\dist_{\contact X}(h,g^nh)>2$, so $h,g^nh$ are strongly separated.  Moreover, $g^{2n}h$ and 
$h$ are separated by $g^nh$, since $g$ translates along the axis (preserving the orientation), and thus we 
have a strongly-separated pair skewered by a power of $g$.
\end{rem}

\begin{proof}[Proof of Corollary~\ref{cor:strong_separation}]
Apply Double-Skewering to $h,v$ to obtain a hyperbolic element $g$ double-skewering the pair $h,v$; in 
particular, $v$ separates $h,gh$ (as required by the statement).

We will show that $\dist_{\contact X}(h,g^4h)>3$.  Since $g$ double-skewers $h$ and $g^4h$, it will then follow from Theorem~\ref{thm:loxo} (as in the proof of Corollary~\ref{cor:long_skewer}) that 
$g$ is $\contact X$--loxodromic.

Suppose, for the sake of a contradiction, that $\dist_{\contact X}(h,g^4h)\leq 3$. Consider the hyperplanes $h,gh,g^2h,g^3h,g^4h$. Let $h=u_1,u_2,u_3,u_4=g^4h$ be the 
vertex 
sequence of a $\contact X$--path of length at most $3$ from $h$ to $g^4h$. Then for some $j\leq 4$ and some 
$i\leq 3$, we have that $u_j$ crosses $g^ih$ and $g^{i+1}h$, and hence crosses $g^iv$ and $g^ih$.  But then 
$g^{-i}u_j$ crosses $h$ and $v$, contradicting strong separation.
%
%  Apply Double-Skewering to $h,v$ to obtain a hyperbolic element $g$ double-skewering the pair $h,v$; in 
% particular, $v$ 
% separates $h,gh$.  Consider the hyperplanes $h,gh,g^2h,g^3h,g^4h$.  Let $h=u_1,u_2,u_3,u_4=g^4h$ be the 
% vertex 
% sequence of a $\contact X$--path of length at most $3$ from $h$ to $g^4h$.  Then for some $j\leq 4$ and some 
% $i\leq 3$, we have that $u_j$ crosses $g^ih$ and $g^{i+1}h$, and hence crosses $g^iv$ and $g^ih$.  But then 
% $g^{-i}u_j$ crosses $h$ and $v$, contradicting strong separation.  
%Thus $\dist_{\contact X}(h,g^4h)>3$, so $g$ 
%is $\contact X$--loxodromic by Corollary~\ref{cor:long_skewer}.
\end{proof}

\begin{cor}[Rank-rigidity and the contact graph]\label{cor:rank_rigidity}
 Let $X$ be a finite-dimensional, irreducible, locally finite, essential CAT(0) cube complex, and suppose $X$ has at least one hyperplane (i.e. $X$ is not a single vertex).  Suppose that 
the action of the group $G$ on $X$ is cocompact.  Then $G$ contains an element $g$ acting loxodromically on 
$\contact X$.
\end{cor}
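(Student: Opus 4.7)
The plan is to apply the Caprace--Sageev rank-rigidity theorem to produce a strongly separated pair of hyperplanes, and then invoke Corollary~\ref{cor:strong_separation} to upgrade such a pair to a $\contact X$--loxodromic element of $G$.

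First, I would verify the hypotheses of the rank-rigidity theorem of Caprace--Sageev (\cite{CapraceSageev}): $X$ is finite-dimensional, locally finite, essential, and $G$ acts cocompactly. Their theorem asserts that under these hypotheses, one of the following holds: (a) $X$ splits nontrivially as a product of CAT(0) cube complexes (preserved by a finite-index subgroup of $G$), or (b) $X$ contains a pair of strongly separated hyperplanes. Since we are assuming $X$ is irreducible and contains at least one hyperplane, alternative (a) is ruled out, so there exist strongly separated hyperplanes $h,v$ in $X$.

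Second, I would apply Corollary~\ref{cor:strong_separation} to the strongly separated pair $(h,v)$. Note the hypotheses of that corollary hold: $X$ is finite-dimensional, $G$ acts essentially, and the action is cocompact on a locally finite cube complex. The corollary directly produces an element $g \in G$ that acts loxodromically on $\contact X$ (and, incidentally, double-skewers $h,v$), which is exactly the desired conclusion.

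The main point to double-check is that the Caprace--Sageev dichotomy really gives us strongly separated hyperplanes in our exact setting (versus, say, only up to passing to an $\Aut(X)$-invariant decomposition), since the statement is formulated there for $\Aut(X)$-essential actions. But since $G$ acts cocompactly, essentiality of $X$ is equivalent to essentiality of the $\Aut(X)$-action, and irreducibility of $X$ rules out the product case at the level of $X$ itself; so the dichotomy applies and produces strongly separated hyperplanes in $X$, not merely in some factor. Once this is in hand, the rest of the argument is immediate from Corollary~\ref{cor:strong_separation}, so no further work is required. The conceptual content of the corollary is really packed into Theorem~\ref{thm:loxo}, which has already done the heavy lifting.
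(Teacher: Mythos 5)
Your overall plan --- produce a strongly separated pair of hyperplanes, then apply Corollary~\ref{cor:strong_separation} --- is exactly the paper's plan, so you have identified the right ingredients and the right division of labour. But there is an imprecision in the first step that conceals a genuine subtlety, which the paper handles carefully and which your write-up does not.

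The Caprace--Sageev rank-rigidity theorem does not, as stated, assert the alternative ``nontrivial product or strongly separated hyperplanes''; its conclusion is ``nontrivial product or rank-one isometry''. The result that actually produces a strongly separated pair is \cite[Proposition~5.1]{CapraceSageev}, and that proposition is proved under the hypothesis that the action has \emph{no fixed point in the visual boundary} $\boundary X$. A cocompact essential action can perfectly well fix a point at infinity, so you cannot invoke Proposition~5.1 directly in our setting. You flagged the essentiality and irreducibility issues (correctly), but not this one, and ``irreducibility rules out the product case'' does not resolve it. The paper's proof fills the gap via \cite[Corollary~4.9]{CapraceSageev}: since $X$ is locally finite and $G$ acts cocompactly (so the action is hereditarily essential), $X$ decomposes as $X_1\times\cdots\times X_p\times Y$ where each $X_i$ has compact hyperplanes and a finite-index subgroup of $G$ acts on $Y$ with no fixed point in $\boundary Y$. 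Irreducibility collapses this to exactly two cases: either $G$ has no fixed point in $\boundary X$, in which case Proposition~5.1 applies; or all hyperplanes of $X$ are compact. In the second case strongly separated hyperplanes must be produced by hand: cocompactness yields finitely many hyperplane orbits, hence a uniform bound $D$ on hyperplane diameters, and since $X$ is unbounded (it is essential and has at least one hyperplane) one can choose hyperplanes $h,v$ with $\dist(h,v)>D$, which are then forced to be strongly separated. With that repair, the remainder of your argument (invoking Corollary~\ref{cor:strong_separation} and, behind it, Theorem~\ref{thm:loxo}) is exactly as in the paper.
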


\begin{proof}
Since $X$ is locally finite and $G$ acts cocompactly, Corollary 4.9 of~\cite{CapraceSageev} applies ($X$ is unbounded since it contains at least one hyperplane and is essential; the action is 
\emph{hereditarily essential}, in the language of~\cite{CapraceSageev}, since $X$ is locally finite and $G$ acts cocompactly).  Since $X$ is irreducible, we therefore have one of the following:
\begin{enumerate}
 \item $G$ does not fix a point in the visual boundary $\boundary X$.  In this case, Proposition~5.1 of~\cite{CapraceSageev} says that $X$ contains a strongly separated pair of hyperplanes.   

\item All hyperplanes of $X$ are compact.  Since $G$ acts cocompactly, there are finitely many orbits of hyperplanes.  So we can choose $D\geq 0$ that exceeds the diameter of every hyperplane.  
Since $X$ is unbounded, it contains hyperplanes $h,v$ with $\dist(h,v)>D$.  Such a pair must be strongly separated.
\end{enumerate}
In either case, $X$ contains a strongly separated pair 
of hyperplanes, and we conclude by applying Corollary~\ref{cor:strong_separation}.
\end{proof}

\subsection{Supporting lemmas}\label{subsec:supporting_lemmas}
The following lemma is known (compare e.g.~\cite[Proposition 4.2]{Genevois:rank_one}), but we 
prove it here for self-containment:

\begin{lem}[Convex hulls of axes]\label{lem:rank_one}
 Let $X$ be a finite-dimensional CAT(0) cube complex and let $g\in\Aut(X)$ be combinatorially hyperbolic.  Let 
$A$ be a combinatorial 
geodesic axis for $g$, and let $Y$ be the cubical convex hull of $A$.  Then:
\begin{enumerate}
 \item \label{item:no_regular}If no regular neighbourhood of $A^{(0)}$ contains $Y^{(0)}$, the element $g$ is 
not rank-one.
 \item \label{item:rank_one}If some regular neighbourhood of $A^{(0)}$ contains $Y^{(0)}$, then either $g$ is 
rank-one or $A$ lies 
in a regular neighbourhood of a hyperplane.
\end{enumerate}
In particular, if $g$ is not rank-one and $A$ does not lie in a neighbourhood of a hyperplane, then for all 
$N\geq 0$, there exist hyperplanes $h,v$, dual to edges $e_h,e_v$ of $A$, such that $\dist(e_h,e_v)>N$ but 
$h\cap v\neq\emptyset$.
\end{lem}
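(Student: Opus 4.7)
The key preliminary observation is that the hyperplanes crossing $Y$ coincide with those crossing $A$: $Y$ is the intersection of halfspaces containing $A$, so any hyperplane not crossing $A$ has $A$ on one side, forcing $Y$ to the same side.

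For (1), suppose $Y^{(0)}\not\subseteq\neb_R(A)$ for any $R$. The plan is to construct a $g$-invariant combinatorial half-plane sub-complex $P\subseteq Y$; since combinatorially convex sub-complexes are $\dist_2$-convex by Haglund's theorem, $P$ will be an isometrically embedded Euclidean half-flat in $(X,\dist_2)$ containing a CAT(0) axis of $g$, so $g$ is not rank-one. Pick $y_n\in Y^{(0)}$ with $\dist(y_n,A)\to\infty$; since $\langle g\rangle$ acts on $A^{(0)}$ with finite quotient, translating by powers of $g$ and pigeonholing lets me assume the nearest vertex of $A$ to $y_n$ is a fixed $x\in A^{(0)}$ for infinitely many $n$. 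Combinatorial geodesics from $x$ to $y_n$ cross arbitrarily many hyperplanes, all crossing $A$. Applying Ramsey's theorem, using that pairwise crossing families have cardinality at most $\dimension X$, produces arbitrarily large pairwise disjoint subfamilies; a short analysis of disjoint hyperplanes crossing a common geodesic shows their dual edges on $A$ recede monotonically from $x$ on a single side. Combining with $\langle g\rangle$-translates, which shift positions on $A$ by $\ell$, yields the grid $P$.

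For (2), assume $Y^{(0)}\subseteq\neb_R(A)$ and $A$ lies in no bounded neighbourhood of any hyperplane. If $g$ is not rank-one, it preserves an isometrically embedded Euclidean half-flat $F$ containing a CAT(0) axis $\alpha$, and by Lemma~\ref{lem:QI}, $A$ remains at bounded $\dist_2$-distance from $F$. The hyperplanes of $X$ intersecting $F$ form two transverse families, parallel and perpendicular to $\alpha$. The hypothesis that $A$ is not near any hyperplane forces $A$ to cross infinitely many parallel-family hyperplanes, and combined with the perpendicular family, this produces vertices of $Y$ arbitrarily far from $A$, contradicting $Y\subseteq\neb_R(A)$. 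Hence $g$ is rank-one.

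For the ``in particular'' clause: assuming $g$ not rank-one and $A$ not near any hyperplane, the half-flat $F$ from the analysis of (2) gives two transverse families $\{V_j\}$, $\{H_i\}$ of hyperplanes of $X$ intersecting $F$, with each $V_j$ crossing each $H_i$. Since $A$ is not near any hyperplane, $A$ crosses infinitely many from each family, giving infinitely many dual edges of each type on $A$. For any $N$, choose $V_{j_k}$ (via $\langle g\rangle$-translation, which shifts indices) whose dual edge on $A$ has position differing from that of a fixed $H_{i_0}$ by more than $N$; then $h=V_{j_k}$ and $v=H_{i_0}$ cross by construction.

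The main obstacle is verifying the grid construction in (1): combining the Ramsey-extracted chain with its $\langle g\rangle$-translates into a genuine combinatorial half-plane requires establishing pairwise disjointness within the transverse family and pairwise crossing across families. Similar delicate claims about hyperplane structure around $A$ underlie the ``vertex of $Y$ far from $A$'' assertion in (2).
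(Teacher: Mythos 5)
Your proposal diverges from the paper's proof in both parts, and each divergence introduces a genuine gap.

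For part~\eqref{item:no_regular}, the paper's argument is soft: it observes that if $Y$ is not contained in any regular neighbourhood of $A$, then $A$ fails to be a Morse geodesic (there are geodesics with endpoints on $A$ leaving every neighbourhood), and then invokes the equivalence of Morse and rank-one in proper CAT(0) spaces (via Sultan and Bestvina--Fujiwara). Your plan is instead to \emph{construct} a $g$-invariant combinatorial half-plane $P\subseteq Y$. This is much stronger than what is needed, and I do not believe it is achievable in the stated generality. Consider $X=\reals^2$ tiled by unit squares with $g$ the diagonal translation $(1,1)$: a combinatorial axis $A$ is a staircase, $Y=X$, and $g$ is not rank-one — but there is no $g$-invariant combinatorially convex subcomplex isomorphic to $[0,\infty)\times\reals$ (the boundary would be a $g$-invariant combinatorial geodesic line, i.e.\ another staircase, and a one-sided staircase region is not median-convex). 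Moreover, the Ramsey step only yields a single chain of pairwise disjoint hyperplanes; you acknowledge yourself that the passage from a chain and its $\langle g\rangle$-translates to a genuine grid is not established, and I think it in fact fails.

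For part~\eqref{item:rank_one} and the ``in particular'' clause, the confusion is sharper. Also note that $g$ need not preserve the half-flat $F$; the definition only asks that some CAT(0) axis lie in one. More importantly, the ``two transverse families, parallel and perpendicular to $\alpha$, with each $V_j$ crossing each $H_i$'' do not exist. By Lemma~\ref{lem:half_flat_description}, a hyperplane meets $F$ either in all of $F$, a line parallel to $\beta$, or a ray starting on $\beta$; in the first two cases $\beta$ (hence $A$) lies in a bounded neighbourhood of that hyperplane, which your hypothesis excludes. So \emph{every} hyperplane crossing $F$ does so in a ray from $\beta$ and hence crosses $\alpha$, and hence $A$; there is no ``parallel family'' at all. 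Your assertion that ``$A$ crosses infinitely many parallel-family hyperplanes'' is therefore the opposite of what holds, and the grid structure your final construction uses is unavailable. The paper's route is different: take one such ray $\rho=h\cap F$, note that since $\rho$ is an infinite CAT(0) ray it is crossed by infinitely many hyperplanes $h_n$, each of which (crossing $F$) meets $\beta$ at a point $\beta(t_n)$ with $|t_n|\to\infty$, giving crossing pairs $h,h_n$ with dual edges on $A$ arbitrarily far apart. For the contradiction in~\eqref{item:rank_one}, the paper further passes to the cubical convex hull $Z$ of $F$ and uses the gate $\gate_Y$ to push points of $Z$ far from $\beta$ to vertices of $Y$ far from $A$; none of this appears in your sketch.
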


Before the proof of Lemma~\ref{lem:rank_one}, we need two auxiliary lemmas:

\begin{lem}\label{lem:hull_of_axis}
Let $X,g,A,Y$ be as in Lemma~\ref{lem:rank_one}.  For $r\geq 0$, let $A_r$ be the subgeodesic from $A(-r)$ to $A(r)$ and let $Y_r$ be the cubical convex hull of $A_r$.  Then $Y=\bigcup_{r\ge0}Y_r$.  
\end{lem}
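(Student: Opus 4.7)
The plan is to prove both containments, with $Y \subseteq \bigcup_{r \geq 0} Y_r$ being the substantive one. The forward containment $\bigcup_r Y_r \subseteq Y$ follows immediately from $A_r \subseteq A$: every halfspace containing $A$ also contains $A_r$, so the intersection of halfspaces cutting out $Y_r$ is taken over a (potentially) larger family than the one cutting out $Y$, whence $Y_r \subseteq Y$.

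For the reverse containment, I first reduce to the $0$--skeleton. The family $\{Y_r\}_{r \geq 0}$ is nested, since $r_1 \leq r_2$ implies $A_{r_1} \subseteq A_{r_2}$ and hence $Y_{r_1} \subseteq Y_{r_2}$, and each $Y_r$ is a convex, hence full, subcomplex. So if every $0$--cube of $Y$ lies in $Y_r^{(0)}$ for some $r$, then for any cube $c \subseteq Y$, taking the maximum of the $r$'s corresponding to the vertices of $c$ produces a single $Y_r$ containing all the vertices of $c$, and fullness places $c$ itself in $Y_r$.

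So fix $v \in Y^{(0)}$ and suppose for contradiction that $v \notin Y_r$ for all $r$. Then for each $r \geq 0$ there is a halfspace $\OL{h_r}$ with $A_r \subseteq \OL{h_r}$ and $v \notin \OL{h_r}$. Because $v \in Y$, no halfspace contains $A$ while excluding $v$, so $A \not\subseteq \OL{h_r}$, meaning that $h_r$ must cross $A$ at some edge not lying in $A_r$. On the other hand, $h_r$ separates $v$ from $A(0) \in A_r$, so $h_r$ is one of the $\dist(v, A(0))$ finitely many hyperplanes separating $v$ from $A(0)$.

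The contradiction then emerges from pigeonhole: some hyperplane $h$ must equal $h_r$ for arbitrarily large $r$, so $h$ is dual to an edge of $A$ lying outside $A_r$ for arbitrarily large $r$. But $A$ is a combinatorial geodesic, so it meets $h$ in a unique edge at a fixed finite distance from $A(0)$, which cannot lie outside $A_r$ once $r$ exceeds that distance. The main subtlety is precisely this use of the geodesic property of $A$: without it, a hyperplane could meet $A$ at infinitely many edges and the pigeonhole step would collapse.
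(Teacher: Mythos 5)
Your proof is correct and follows essentially the same route as the paper's: both arguments rest on the facts that only finitely many hyperplanes separate a given vertex of $Y$ from $A(0)$, and that each of these crosses the geodesic $A$ in a single edge, hence crosses $A_r$ for all sufficiently large $r$. The paper phrases this directly (choosing $r$ large enough that every such hyperplane separates $A(-r)$ from $A(r)$) while you argue by contradiction and pigeonhole, but the content is identical; your explicit reduction from cubes to vertices via fullness and nestedness of the $Y_r$ is a detail the paper leaves implicit.
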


\begin{proof}
 Since any convex subcomplex containing $A$ must contain $A_r$ for all $r$, we have $\bigcup_{r\ge0}Y_r\subseteq Y$.  To prove the reverse inclusion, let $y\in Y$ be a vertex.  Choose $r\ge 0$ such 
that every hyperplane separating $y$ from $A(0)$ separates $A(-r)$ from $A(r)$; this is possible since each hyperplane separating $y$ from $A(0)$ 
crosses $Y$ and hence $A$, and since there are finitely many such hyperplanes.  Any halfspace containing 
$A_r$ contains $A(-r),A(0),A(r)$, and hence the associated hyperplane does not separate $A(0)$ from $y$.  Thus 
$y\in Y_r$.  Hence $Y=\bigcup_{r\ge0}Y_r$.
\end{proof}

In the next lemma, by a \emph{half-flat} in the CAT(0) cube complex $X$, we mean an isometric embedding $F:[0,\infty)\times\reals\to X$ where $X$ 
is given the CAT(0) metric and $[0,\infty)\times\reals$ is given the Euclidean metric.  We also use the notation $F$, and the term ``half-flat'', for its image in $X$.  The bi-infinite CAT(0) 
geodesic $\beta$ \emph{bounds} the 
half-flat $F$ if $F|_{\{0\}\times\reals}=\beta$ (here we are also using the notation $\beta$ both for the isometric embedding $\beta:\reals\to X$ and for its image).

\begin{lem}[Hyperplanes and half-flats]\label{lem:half_flat_description}
 Let $X$ be a finite-dimensional CAT(0) cube complex.  Let $\beta$ be a CAT(0) geodesic that bounds a half-flat $F$.  Let $h$ be a hyperplane of $X$ such that $h\cap F\neq\emptyset$.  
Then one of the following holds:
\begin{itemize}
 \item $F\subseteq h$;
 \item $F\cap h$ is a line in $F$ parallel to $\beta$;
 \item $F\cap h$ is a ray in $F$ whose initial point is on $\beta$.
\end{itemize}
% Moreover, $f$ lies in the cubical convex hull o
% 
%  
%  
% \begin{enumerate}[(a)]
%  \item \label{item:lines}Let $h$ be a hyperplane.  Then $F^{-1}(h)$ is either $\emptyset$, the whole of $[0,\infty)\times\reals$, a ray starting at $\{0\}\times\{t\}$ for some $t\in\reals$, or a line 
% parallel to $\{0\}\times\reals$.  
% Moreover, $\image(F)$ lies in the cubical convex hull of $\beta$ if and only if every nonempty $F^{-1}(h)$ is a ray or the whole of $[0,\infty)\times\reals$.
%  
%  \item Let $\{h_i\}_{i\in I}$ be the hyperplanes in $X$ intersecting, but not containing, $\beta$.  Then each $F^{-1}(h_i)$ is a ray, and these rays are partitioned into finitely many parallelism 
% classes as $i$ varies in 
% $I$.\label{item:rays}
% 
% \item \label{item:at_least_two}If $\image(F)$ lies in the cubical convex hull of $\beta$, then $\{F^{-1}(h_i)\}_{i\in I}$ contains at least two parallelism classes of rays, each of which contains 
% infinitely many rays.
% \end{enumerate}
\end{lem}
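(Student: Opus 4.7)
My plan is to study $F\cap h$ as a closed, Euclidean-convex subset of the half-plane $F$, exploiting the product structure $\neb(h)\cong h\times[-\tfrac12,\tfrac12]$ of the carrier in the CAT(0) metric. Since $F$ and $h$ are both CAT(0)-convex in $X$, so is $F\cap\neb(h)$, which is therefore Euclidean-convex inside $F$. The key tool is the ``height'' projection $\pi\colon\neb(h)\to[-\tfrac12,\tfrac12]$ with $\pi^{-1}(0)=h$: as the projection of a CAT(0) product to its $\reals$-factor, $\pi$ is affine along CAT(0) geodesics. Euclidean straight segments in $F\cap\neb(h)$ are CAT(0) geodesics in $X$ (since $F$ is isometrically embedded and $F\cap\neb(h)$ is convex in $F$), so $\pi|_{F\cap\neb(h)}$ is a Euclidean-affine function on a convex planar region. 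Each $q\in F\cap h$ admits a (half-)disc neighborhood in $F$ lying in $\neb(h)$, since $\neb(h)$ contains the closed $\tfrac12$-neighborhood of $h$, so this affine analysis applies locally at $q$.

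I would then split into two cases. \textbf{Case A:} $F$ sits in one closed halfspace, say $F\subseteq\overline{\OL h}$, so $\pi\le 0$ on $F\cap\neb(h)$. If some $q\in F\cap h$ lies in the interior of $F$, then $\pi|_F$ is a $\le 0$ affine function on a $2$-disc attaining $0$ in its relative interior, hence is identically $0$ there; by affinity this extends to $\pi|_{F\cap\neb(h)}\equiv 0$, i.e., $F\cap\neb(h)\subseteq h$. But then the continuous function $x\mapsto\dist_2(x,h)$ on the connected set $F$ would skip all values in $(0,\tfrac12]$, forcing $F\subseteq h$ (case 1 of the lemma). Otherwise $F\cap h\subseteq\beta$; repeating the affine analysis at a point $q\in F\cap h\cap\beta$, using a half-disc and the constraint that $\pi|_F\le 0$ on the boundary $\beta$ near $q$ forces the $t$-coefficient of $\pi|_F$ to vanish, shows $F\cap h$ is open in $\beta$ at $q$. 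Since $F\cap h$ is also closed and $\beta\cong\reals$ is connected, $F\cap h=\beta$, a line parallel to $\beta$ (case 2).

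\textbf{Case B:} $F$ meets both open halfspaces, so $\pi|_{F\cap\neb(h)}$ attains both signs. Then at any interior point $q\in F\cap h$, $\pi|_F$ is nonconstant on a $2$-disc neighborhood (else $F\subseteq h$ as in Case A), so its zero set is a $1$-dimensional Euclidean line through $q$. At a boundary point $q\in F\cap h\cap\beta$ the same analysis on a half-disc gives that $F\cap h$ is either locally a ray emanating from $q$ into $F$ or locally equal to $\beta$; the latter would force $F\cap h=\beta$ by the clopen argument, but then $F\setminus\beta$ is connected and lies in a single open halfspace, contradicting Case B. Thus $F\cap h$ is a closed, Euclidean-convex, $1$-dimensional subset of $F$ whose endpoints can only lie on $\beta$ or at infinity; in the half-plane $F$ this forces $F\cap h$ to be either a full line parallel to $\beta$ in the interior of $F$ (case 2) or a ray with initial point on $\beta$ (case 3).

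The main obstacle is the tangential scenario of Case A, where one must rule out $F$ touching $h$ at an isolated interior point, along an arc in the interior of $F$, or along a proper sub-ray of $\beta$. The affine projection argument is what dispatches these, converting the delicate CAT(0) tangency question into the elementary observation that a Euclidean-affine function on a convex planar region attaining a one-sided extremum in the relative interior must be constant.
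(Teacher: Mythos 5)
Your argument is correct, and it is exactly the paper's intended approach: the paper leaves this as "an exercise in CAT(0) geometry using CAT(0) convexity of hyperplanes and halfspaces and the product structure of hyperplane carriers," and your proof is a full write-up using precisely those ingredients (convexity of $F$, $h$, and $\neb(h)$, plus the affine projection $\neb(h)\cong h\times[-\tfrac12,\tfrac12]\to[-\tfrac12,\tfrac12]$). The reduction to the zero set of an affine function on the convex planar region $F\cap\neb(h)$, together with the half-disc analysis along $\beta$, correctly disposes of all the degenerate tangency cases.
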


\begin{proof}
This is an exercise in CAT(0) geometry using CAT(0) 
convexity of hyperplanes and halfspaces and the product structure of hyperplane carriers; see e.g.~\cite[Remark 3.4]{HagenPrzytycki}.
\end{proof}

\begin{proof}[Proof of Lemma~\ref{lem:rank_one}]
We first prove assertion~\eqref{item:no_regular}.  Fix $y\in Y^{(0)}$.  We claim that $y$ lies on some combinatorial geodesic with endpoints on $A$.  Indeed, by Lemma~\ref{lem:hull_of_axis}, we can 
choose $r\ge0$ such that $y\in Y_r$, where $Y_r$ is the convex hull of the subgeodesic of $A$ from $A(-r)$ to $A(r)$.  Since $Y_r$ is, equivalently, the interval between $A(-r)$ and $A(r)$, we have 
that $y$ lies on a geodesic from $A(-r)$ to $A(r)$.

% For each $r\geq 0$, let $A_r$ be the subgeodesic from $A(-r)$ to $A(r)$.  Let $Y_r$ be the convex hull of 
% $A_r$.  Then $\bigcup_{r\ge 0}Y_r\subset Y$, since any convex subcomplex containing $A$ contains $A_r$ and 
% hence $Y_r$.  On 
% the other hand, let $y\in Y$ be a vertex.  Choose $r\ge0$ such that every hyperplane separating $y$ from 
% $A(0)$ separates $A(-r)$ from $A(r)$; this is possible since each hyperplane separating $y$ from $A(0)$ 
% crosses $Y$ and hence $A$, and since there are finitely many such hyperplanes.  Any halfspace containing 
% $A_r$ contains $A(-r),A(0),A(r)$, and hence the associated hyperplane does not separate $A(0)$ from $y$.  Thus 
% $y\in Y_r$.  Hence $Y=\bigcup_{r\ge0}Y_r$.
% 
% Note that $Y_r$ is the convex hull of the set $\{A(-r),A(r)\}$.
% Let $m$ be the median of $y,A(-r),A(r)$.  Let $h$ be a 
% hyperplane.  Since $Y_r$ is the hull of $\{A(-r),A(r)\}$, the hyperplane $h$ cannot separate $y$ from both 
% $A(-r)$ and $A(r)$.  On the other hand, if $h$ separates $y$ from $m$, then by the definition of the median, 
% $h$ must separate $y$ from $\{A(-r),A(r)\}$.  Thus no hyperplane separates $y$ from $m$, so $y=m$.  Thus $y$ 
% lies on a geodesic from $A(-r)$ to $A(r)$, as required.

% 
% and choose $r$ such that $y\in Y_r$.  Since $Y_r$ is the convex hull of the 
% set $\{A(-r),A(r)\}$, $Y_r$ is a \emph{median interval}, i.e. $Y_r$ is the set of vertices $z$ such that 
% $\mu(A(-r),A(r),z)=z$.  So in particular, $\mu(A(-r),A(r),y)=y$, whence $y$ lies on a 
% geodesic with endpoints on $A$.  

Suppose that $Y$ does not lie in a regular neighbourhood of $A$.  Then for any $R>0$, the above argument shows 
that there is a combinatorial geodesic that has endpoints on $A$ but does not lie in $\neb_R(A)$.  Hence $A$ 
is not a \emph{Morse geodesic} (see e.g.~\cite[Definition 1.2]{ACGH} for the definition; the notion goes 
back in some form to~\cite{Morse}).  

Now, $Y$ is a proper CAT(0) space, because it is the convex hull of a bi-infinite combinatorial 
geodesic.\footnote{This follows from~\cite[Theorem 1.14]{BCGNW}, and one can also prove it using 
Corollary~\ref{cor:linear_growth} and the fact that $Y$ contains no facing triple.}  Moreover, $\langle 
g\rangle$ acts on $Y$ by isometries, with $g$ acting hyperbolically.  Finally, any CAT(0) geodesic axis in $Y$ 
for $g$ is not Morse.  Hence, by~\cite[Lemma 3.3]{Sultan} and~\cite[Theorem 5.4]{BestvinaFujiwara}, $g$ is not 
rank-one.

We now prove assertion~\eqref{item:rank_one} and the ``in particular'' statement.

For both, we suppose that $g$ is not rank-one and $A$ does not lie in any neighbourhood of any hyperplane. Let $\beta:\reals\to Y$ be a CAT(0) geodesic axis of $g$, and let $F$ be a half-flat bounded 
by $\beta$.  (The half-flat $F$ need not be unique or $g$--invariant, but we will not need this.)  

Let $h$ be a hyperplane intersecting $F$.  By 
Lemma~\ref{lem:half_flat_description}, $h\cap F$ is either all of $F$, a line parallel to $\beta$, or a ray starting on $\beta$.  In either of the first two cases, $\beta$ is contained in a 
neighbourhood of $h$, so, since $A$ and $\beta$ lie at finite Hausdorff distance, the same is true for $A$, a contradiction.  Hence $h$ intersects $\beta$ in a point and intersects $F$ in a ray.

From this, we obtain assertion~\eqref{item:rank_one} as follows.  We have seen that every hyperplane crossing $F$ crosses $\beta$, and hence crosses $A$, since $A$ and $\beta$ cross the same 
hyperplanes.  Let $Z$ be the cubical convex hull of $F$.  Any hyperplane crossing $Z$ crosses $F$, and hence crosses $Y$.

Consider the gate map $\gate_Y:X\to Y$.  We claim that there exists $N_0\in\naturals$ such that for all $z\in Z^{(0)}$, we have $\dist_2(z,\gate_Y(z))\leq N_0$.  Indeed, let $w$ be a hyperplane 
separating $z$ from $\gate_Y(z)$.  Recall from Section~\ref{sec:defn} that such a $w$ must separate $z$ from $Y$.  Since every hyperplane crossing $Z$ crosses $Y$, we see that $w$ must not cross $Z$, 
and thus separates all of $Z$ from $Y$.  
There are finitely many such hyperplanes, so Lemma~\ref{lem:QI} provides the constant $N_0$. 

Let $N\geq 0$ be given.  Choose $f\in F$ such that $\dist_2(f,\beta)>N$, which is possible since $F$ is a half-flat bounded by $\beta$.  Choose a $0$--cube $f'\in Z$ such that $\dist_2(f,f')\leq 
\sqrt{\dimension X}/2.$  So $\gate_Y(f')\in Y$ satisfies $$\dist_2(f,\gate_Y(f'))\leq\sqrt{\dimension X}/2+N_0.$$  Hence $$\dist_2(\gate_Y(f'),\beta)>N-\sqrt{\dimension X}/2-N_0.$$  Since $\beta$ 
and $A$ are at finite Hausdorff distance, and 
we could have chosen $N$ arbitrarily large, $Y$ contains $0$--cubes arbitrarily far from $A$, as required.  (We have used the CAT(0) metric here but the same conclusion applies in the 
$\ell_1$ metric by Lemma~\ref{lem:QI}.)

%%%%%%%%%%%%%%%%%%%
To prove the ``in particular'' statement, let $h$ be a hyperplane intersecting $F$, and recall that $h\cap F$ is a ray starting on $\beta$.  Let $\rho=h\cap F$ be such a ray, and let 
$t\in\reals$ be such that $\rho\cap\beta=\beta(t)$.  Since $\rho$ is a CAT(0) geodesic ray in $X$, for all $n\in\naturals$ there exists a hyperplane $h_n$ such that $\dist(h_n\cap \rho,\beta(t))>n$.  
Now, since $h_n$ intersects $F$, it does so in a ray $\nu_n$ intersecting $\beta$ at a point $\beta(t_n)$.  Since any interval in $\beta$ intersects finitely many hyperplanes (by, say, 
Lemma~\ref{lem:QI}), the quantity $|t_n|$ is unbounded as $n\to\infty$.  Hence, for any $r\geq 0$, there exist hyperplanes $h,v$ such that $h\cap v\neq\emptyset$ but 
$\dist_2(h\cap\beta,v\cap\beta)>r$.  Since $\beta$ and $A$ fellow travel (in the CAT(0) and $\ell_1$ metrics), and cross the same hyperplanes, we obtain the desired conclusion.  
\end{proof}

The next lemma is Lemma 3.1 in~\cite{BHS:HHS_I}.  Here we give essentially the same proof, except using gates 
instead of disc diagrams.  

\begin{lem}[``Hierarchy paths'' in $\contact X$]\label{lem:hierarchy}
Let $X$ be a CAT(0) cube complex and let $x,y\in X^{(0)}$.  Let $h_x,h_y$ be hyperplanes such that 
$x\in\neb(h_x)$ and $y\in\neb(h_y)$.  Then there exists a sequence $h_x=h_1,\ldots,h_k=h_y$ of hyperplanes 
such that:
\begin{itemize}
 \item $\neb(h_i)\cap\neb(h_{i+1})\neq\emptyset$ for $1\leq i\leq k-1$, and 
 \item $h_1,\ldots,h_k$ is a $\contact X$--geodesic 
from $h_x$ to $h_y$;
\item there is a path $\gamma=\gamma_1\cdots\gamma_k$ from $x$ to $y$, where each $\gamma_i$ is a 
combinatorial geodesic in $\neb(h_i)$;
\item $\gamma$ is a combinatorial geodesic;
\item for each $i$, $|\gamma_i|\leq \dist(\gate_{\neb(h_i)}(x),\gate_{\neb(h_i)}(y))$.
\end{itemize}
Hence $x$ and $y$ are joined by a geodesic $\gamma$ such that $\pi\circ\gamma$ is an unparameterised 
quasigeodesic in $\contact X$ (with constants independent of $X$).  
\end{lem}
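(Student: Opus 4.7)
The plan is to proceed by induction on $k_0 := \dist_{\contact X}(h_x, h_y)$, with base cases $k_0 \in \{0, 1\}$ handled directly by convexity of hyperplane carriers, and the inductive step arranged to peel off the last carrier via the gate $\gate_{\neb(h_y)}(x)$.

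For the base cases: if $k_0 = 0$, then $h_x = h_y$, and I take $k = 1$ with $\gamma_1$ any combinatorial geodesic from $x$ to $y$, which lies in $\neb(h_x)$ by convexity. If $k_0 = 1$, let $Y := \neb(h_x) \cap \neb(h_y)$ (convex and nonempty), $v_1 := \gate_Y(x)$, and let $\gamma_1, \gamma_2$ be combinatorial geodesics $x \to v_1$ in $\neb(h_x)$ and $v_1 \to y$ in $\neb(h_y)$. I verify $\gamma_1 \gamma_2$ is a geodesic by showing no hyperplane $a$ is dual to edges of both: such an $a$ would cross both $\neb(h_x)$ and $\neb(h_y)$, hence by the Helly property also cross $Y$, contradicting the fact that hyperplanes separating $x$ from $v_1 = \gate_Y(x)$ must separate $x$ from $Y$ (so cannot cross $Y$). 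The length bounds follow from Lemma~\ref{lem:gate_sep}.

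For the inductive step with $k_0 \ge 2$, the aim is to find a hyperplane $h$ and a vertex $v$ satisfying (i) $h \ne h_y$ and $v \in \neb(h) \cap \neb(h_y)$, (ii) $\dist_{\contact X}(h_x, h) = k_0 - 1$, and (iii) $\dist(x, v) + \dist(v, y) = \dist(x, y)$. Granted these, induction applied to $(x, v)$ with target pair $(h_x, h)$ produces $h_x = h_1, \ldots, h_{k-1} = h$ and subgeodesics $\gamma_1, \ldots, \gamma_{k-1}$ ending at $v$; setting $h_k := h_y$ and taking $\gamma_k$ to be a combinatorial geodesic $v \to y$ in $\neb(h_y)$, condition (iii) makes $\gamma_1 \cdots \gamma_k$ a geodesic, while (i) and (ii) yield the $\contact X$-geodesic property. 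The length bound for each $\gamma_i$ follows from Lemma~\ref{lem:gate_sep}, since every hyperplane crossed by $\gamma_i$ separates $x$ from $y$ (because $\gamma$ is a geodesic) and crosses $\neb(h_i)$, so is counted in $\dist(\gate_{\neb(h_i)}(x), \gate_{\neb(h_i)}(y))$. The concluding assertion that $\pi \circ \gamma$ is an unparameterised quasigeodesic in $\contact X$ is then immediate, since each $\gamma_i$ projects into the closed star of $h_i$, while the sequence $h_1, \ldots, h_k$ is itself a $\contact X$-geodesic; the constants are universal because they depend only on the coarse Lipschitz constants of $\pi$.

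The main obstacle is establishing the existence of $(h, v)$. A natural plan is to take $v := \gate_{\neb(h_y)}(x)$, which automatically forces (iii) via the standard gate characterisation together with $y \in \neb(h_y)$. The set $\pi(v) \setminus \{h_y\}$ is nonempty: if $v$ had degree one with unique edge dual to $h_y$, then the other endpoint $u$ of that edge satisfies $\dist(x, u) < \dist(x, v)$ (since any path from $x$ to $v$ must traverse $u$), contradicting the minimality defining the gate. The remaining challenge—the delicate technical heart of the argument—is to exhibit some $h \in \pi(v) \setminus \{h_y\}$ realising $\dist_{\contact X}(h_x, h) = k_0 - 1$. My approach is to take a combinatorial geodesic from $x$ to $y$ passing through $v$ (which exists by (iii)) and examine the hyperplanes dual to the edges adjacent to $v$ on this geodesic; each such hyperplane lies in $\pi(v)$ and is $\contact X$-adjacent to $h_y$ via $v$, and the structure of the induced walk in $\contact X$ — together with the fact that its endpoints are $\contact X$-distance $k_0$ apart — forces at least one of these candidates to lie on a $\contact X$-geodesic to $h_x$, hence to have $\contact X$-distance exactly $k_0 - 1$ from $h_x$.
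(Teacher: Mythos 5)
Your outer inductive machinery is sound: the base cases are handled correctly (the Helly argument for $k_0=1$ works), the concatenation of the inductively-produced geodesic $x\to v$ with a geodesic $v\to y$ inside $\neb(h_y)$ is a geodesic by condition~(iii), condition~(iii) itself follows from $v=\gate_{\neb(h_y)}(x)$ as you say, the $\contact X$-geodesic property for $h_1,\ldots,h_k$ follows from~(i) and~(ii), and your final length-bound argument via Lemma~\ref{lem:gate_sep} is exactly right. However, the ``delicate technical heart'' you identify is a genuine gap, not a technicality: you never prove the existence of a hyperplane $h\in\pi(v)\setminus\{h_y\}$ with $\dist_{\contact X}(h_x,h)=k_0-1$, and the sketch offered does not amount to a proof.

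Concretely, your two candidates are $w$ (dual to the last edge of a geodesic from $x$ to $v$) and $w'$ (dual to the first edge from $v$ towards $y$). For $w$: one can show $w$ separates $\neb(h_x)$ from $\neb(h_y)$ (when $k_0\ge3$), and that some $u_i$ on a $\contact X$-geodesic $h_x=u_1,\ldots,u_{k_0+1}=h_y$ must cross $w$; but this only gives $\dist_{\contact X}(h_x,w)\le k_0$, not $\le k_0-1$, because the $u_i$ crossing $w$ could be $u_{k_0}$ itself. Since $\contact X$ is a quasi-tree rather than a tree, ``$w$ separates $h_x$ from $h_y$'' does \emph{not} force $w$ to lie on a $\contact X$-geodesic between them, so the $k_0-1$ bound needs an actual argument. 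For $w'$: it crosses $h_y$ (or may even \emph{equal} $h_y$), and there is again no reason $\dist_{\contact X}(h_x,w')\le k_0-1$. The appeal to ``the structure of the induced walk'' is not probative: the walk $\pi\circ\sigma$ has length on the order of $\dist(x,y)$, typically much larger than $k_0$, so its endpoints being $\contact X$-distance $k_0$ apart places no constraint on which hyperplanes near $v$ lie on a $\contact X$-geodesic. Until this existence claim is established, the induction does not close.

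The paper avoids this difficulty by reversing the order of construction: it \emph{starts} from an arbitrary $\contact X$-geodesic $h_1,\ldots,h_k$ (which exists because $\contact X$ is connected), builds a candidate path $\gamma=\gamma_1\cdots\gamma_k$ by iterated gating into the successive carriers, and then chooses the construction of minimal lexicographic complexity $(|\gamma_1|,\ldots,|\gamma_k|)$. That $\gamma$ is an $X$-geodesic is then extracted from a four-case analysis (if a hyperplane is dual to edges of both $\gamma_i$ and $\gamma_j$, rule out $i=j$, $j>i+2$ by geodesity of the $h_\bullet$-sequence, $j=i+1$ by a gate contradiction, and $j=i+2$ by a complexity reduction). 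This sidesteps entirely the question of whether a gate into $\neb(h_y)$ ``sees'' a $\contact X$-geodesic, which is precisely where your argument stalls. If you want to salvage the inductive approach, you would need to prove the existence of the required $h$ --- for instance by showing that the unique hyperplane $w$ separating $x$ from $\neb(h_y)$ whose carrier meets $\neb(h_y)$ lies on a $\contact X$-geodesic from $h_x$ to $h_y$ --- and at present that claim is not justified and is not obviously true.
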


\begin{rem}
Lemma~\ref{lem:hierarchy} says roughly that any two points in $X$ can be joined by a geodesic that tracks a 
geodesic between their projections to the contact graph.  This is reminiscent of ``hierarchy paths'' in the 
marking complex of a surface~\cite{MMII}, with the curve graph playing the role of the contact graph.  This 
similarity is part of the motivation for the notion of a \emph{hierarchically hyperbolic 
space}~\cite{BHS:HHS_I}.
\end{rem}

\begin{proof}[Proof of Lemma~\ref{lem:hierarchy}]
 Let $h_x=h_1,\ldots,h_k=h_y$ be a sequence of hyperplanes satisfying the first two properties, which is 
possible just because $\contact X$ is a connected graph.  

Let 
$x=x_1\in\neb(h_1)$.  For $2\leq i\leq k$, suppose that $x_{i-1}\in\neb(h_{i-1})$ has been chosen, and let 
$x_i=\gate_{\neb(h_i)}(x_{i-1})\in\neb(h_i)$.  Let $x_{k+1}=y$.  For each $i$, let $\gamma_i$ be a 
combinatorial geodesic in $\neb(h_i)$ joining $x_i$ to $x_{i+1}$.  Let $\gamma=\gamma_1\cdots\gamma_k$.

The \emph{complexity} of the pair $((h_1,\ldots,h_k),(\gamma_1,\ldots,\gamma_k))$ of $k$--tuples is the tuple 
$(|\gamma_1|,\ldots,|\gamma_k|)$, taken in lexicographic order.  Suppose that $\gamma_1,\ldots,\gamma_k$ have 
been chosen as above so as to minimise the complexity.  

\textbf{$\gamma$ is a geodesic:}  We claim 
that $\gamma$ is a combinatorial geodesic.  Suppose to the contrary that some hyperplane $h$ is dual to two 
distinct edges of $\gamma$, respectively lying in $\gamma_i,\gamma_j$ for $1\leq i\leq j\leq k$.  We cannot 
have $i=j$, since $\gamma_i$ is a geodesic.  We also cannot have $j>i+2$, because $h$ intersects $\neb(h_i)$ 
and $\neb(h_j)$, which would yield a path $h_1,\ldots,h_i,h,h_j,\ldots,h_k$ in $\contact X$ from $h_1$ to 
$h_k$.  This path has length less than $k-1$, contradicting that $h_1,\ldots,h_k$ is a geodesic.

Hence $j=i+1$ or $j=i+2$.  If $j=i+1$, then $h$ intersects $\neb(h_i)$ and $\neb(h_{i+1})$, and thus does not 
separate 
$x_i$ from $\neb(h_{i+1})$.  Hence $h$ does not separate $x_i$ from $\gate_{\neb(h_{i+1})}(x_i)$, which is a 
contradiction since $h$ is dual to an edge of $\gamma_i$, and therefore separates the endpoints of $\gamma_i$. 
 Thus 
$j\neq i+1$.

If $j=i+2$, then we can replace $h_{i+1}$ by $h$ to yield a lower-complexity pair.  Indeed, we replace 
$h_1,\ldots,h_k$ by $h_1,\ldots,h_i,h,h_{i+2},\ldots,h_k$, obtaining a new geodesic in $\contact X$ from $h_x$ 
to $h_y$.  
We replace $x_{i+1}$ by $x_{i+1}'=\gate_{\neb(h)}(x_i)$ and replace $x_{i+2}$ by 
$\gate_{\neb(h_{i+2})}(x_{i+1}')$.  The geodesics $\gamma_1,\ldots,\gamma_{i-1}$ are unchanged, but $\gamma_i$ 
is replaced by a geodesic of length 
$$\dist(x_i,\gate_{\neb(h)}(x_i))=\dist(x_i,\neb(h))<\dist(x_i,\neb(h_{i+1}))=|\gamma_i|,$$ so we have reduced 
complexity.  This contradicts our initial choice of pair of $k$--tuples, and we conclude that $j\neq i+2$. 
 Hence no hyperplane is dual to two distinct edges of $\gamma$, so $\gamma$ is a geodesic.

\textbf{Length of $\gamma_i$:}  Fix $i$ and let $h$ be a hyperplane crossing $\gamma_i$.  Since $\gamma$ is a 
geodesic, $h$ separates $x$ from $y$.  On the other hand, $h$ crosses the convex subcomplex $\neb(h_i)$, so 
$h$ must separate $\gate_{\neb(h_i)}(x)$ from $\gate_{\neb(h_i)}(y)$, by Lemma~\ref{lem:gate_sep}.  Since 
$|\gamma_i|$ is the number of 
hyperplanes $h$ crossing $\gamma_i$, we have $\dist(\gate_{\neb(h_i)}(x),\gate_{\neb(h_i)}(y))\geq |\gamma_i|$.

\textbf{Unparameterised quasigeodesic:}  Let $\gamma$ be a geodesic from $x$ to $y$ provided by the first part 
of the lemma, so that $\gamma=\gamma_1\cdots\gamma_k$, where each $\gamma_i$ lives in the carrier of a 
hyperplane $h_i$, and the sequence $h_1,\ldots,h_k$ is a $\contact X$--geodesic from a point $h_1\in\pi(x)$ to 
a point $h_k\in\pi(y)$.  By construction, $\pi(\gamma_i)$ lies in the $1$--neighbourhood in $\contact X$ of 
$h_i$, so $\pi\circ\gamma$ lies at uniformly bounded Hausdorff distance in $\contact X$ from some, and hence 
any, geodesic from $\pi(x)$ to $\pi(y)$, as required.
\end{proof}

\section{The sector lemma}\label{sec:strengthened_sector_lemma}

Our goal is to prove Proposition~\ref{propi:SSL}.  Fix a CAT(0) cube complex $X$ satisfying the hypotheses: $X$ is irreducible, locally finite, essential, hyperplane-essential, and $\Aut(X)$ acts 
cocompactly (hence $X$ is finite-dimensional).  We continue to use the convention that, if $h$ is a hyperplane, then $\OL h,\OR h$ denote the associated halfspaces.

The main lemma is:

\begin{lem}\label{lem:sector_hop}
Let $h,v$ be distinct hyperplanes of $X$ such that $h\cap v\neq\emptyset$.  Suppose that $\OL h\cap \OL v$ contains a hyperplane $a$ such that $\dist_{\contact X}(a,h)>2$.  Then $\OL h\cap \OR v$ 
contains a hyperplane $a'$ such that $\dist_{\contact X}(a',h)>2$.
\end{lem}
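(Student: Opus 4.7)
The plan is to exploit hyperplane-essentiality of the action on $h$ to find an automorphism $g \in \stabilizer_{\Aut(X)}(h)$ that preserves the halfspace $\OL h$ and drags the gate point $\gate_h(a)\in h$ across the hyperplane $v\cap h$ of the CAT(0) cube complex $h$. The candidate hyperplane is then $a' := ga$, and the goal reduces to verifying that this translate actually lies in $\OL h \cap \OR v$ and is still far from $h$ in the contact graph.

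The first step is to locate $\gate_h(a)$. By Lemma~\ref{lem:single_point}, since $\dist_{\contact X}(a,h) > 2$, the gate $\gate_h(a)$ is a single point, and because $v$ crosses $h$ the observation below (see next paragraph) puts this point in $\OL v \cap h$, which is one of the two halfspaces of $h$ bounded by the hyperplane $v\cap h$. Second, by Lemma~\ref{lem:hereditary_cocompactness}, $\stabilizer_{\Aut(X)}(h)$ acts cocompactly on $h$, and by hyperplane-essentiality it acts essentially on $h$. Passing to the index-at-most-$2$ subgroup $\stabilizer'\le\stabilizer_{\Aut(X)}(h)$ that preserves $\OL h$ in $X$ preserves both cocompactness and essentiality of the action on $h$. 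Under cocompactness, essentiality forces every $\stabilizer'$--orbit in $h$ to meet every halfspace of every hyperplane of $h$, so we may choose $g\in\stabilizer'$ with $g\gate_h(a) \in \OR v \cap h$. Setting $a' := ga$, we have $a' \subset g\OL h = \OL h$, and $\dist_{\contact X}(a',h) = \dist_{\contact X}(ga,gh) = \dist_{\contact X}(a,h) > 2$ since $g$ fixes $h$ and acts isometrically on $\contact X$.

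The main obstacle is verifying $a' \subset \OR v$, and the key trick is the interaction between the gate map and halfspaces of $v$. The observation I will use repeatedly is: since $v$ crosses $h$, the hyperplane $v$ does not separate any point from $h$, so by the defining property of gates, a point $y$ and its gate $\gate_h(y)$ always lie on the same side of $v$. Applied to $a$, this places $\gate_h(a)$ in $\OL v\cap h$ as claimed above. Now for $a'$: it cannot cross $v$, for then $a'\text{--}v\text{--}h$ would be a $\contact X$--path of length $2$, contradicting $\dist_{\contact X}(a',h)>2$. Hence $a'$ lies entirely on one side of $v$. Since $\gate_h(a') = g\gate_h(a)$ is a single point of $\OR v$, and each point of $a'$ lies on the same side of $v$ as its gate, we conclude $a' \subset \OR v$, finishing the argument.
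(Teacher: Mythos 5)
Your proof is correct and follows essentially the same strategy as the paper's: locate the single gate point $p=\gate_h(a)$ via Lemma~\ref{lem:single_point}, find $g\in\stabilizer_{\Aut(X)}(h)$ preserving $\OL h$ with $gp\in\OR v$, set $a'=ga$, and use the gate/separation property together with the fact that $a'$ cannot cross $v$ (as that would force $\dist_{\contact X}(a',h)\le 2$) to conclude $a'\subset\OL h\cap\OR v$. The one genuine difference is how $g$ is produced: the paper invokes Proposition~3.2 of Caprace--Sageev to get an element of $\stabilizer_{\Aut(X)}(h)$ skewering $v\cap h$ inside $h$ and then passes to a suitable power, whereas you argue directly that cocompactness of $\stabilizer'$ on $h$ makes the orbit of $p$ coarsely dense while essentiality of $h$ (from hyperplane-essentiality) supplies points of $\OR v\cap h$ arbitrarily far from $v\cap h$; this is a valid, and arguably more elementary, substitute.
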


\begin{proof}
Since $\dist_{\contact X}(a,h)>2$, Lemma~\ref{lem:single_point} implies that $\gate_h(a)$ is a single point, which we denote by $p$.  

Now, $h$ is a locally finite CAT(0) cube complex, and 
$\stabilizer_{\Aut(X)}(h)$ acts on $h$ cocompactly by Lemma~\ref{lem:hereditary_cocompactness}, since $\Aut(X)$ acts on $X$ cocompactly.  Since the action of $\Aut(X)$ on $X$ is assumed to be 
hyperplane-essential, the action of $\stabilizer_{\Aut(X)}(h)$ on $h$ is essential, so by Proposition~3.2 of~\cite{CapraceSageev}, there exists 
$g\in\stabilizer_{\Aut(X)}(h)$ such that, regarded as a hyperplane of $h$, the intersection $h\cap v$ 
separates $g^{-1}(h\cap v)$ from $g(h\cap v)$, and $g\OR v\subsetneq \OR v$.  By replacing $g$ by $g^2$ if necessary, we can assume that $g$ stabilises $\OL h$ and $\OR h$.  By replacing $g$ with a 
further positive power, we have $gp\in \OR v$.  In other words, $g\gate_h(a)=\gate_{gh}(ga)=\gate_h(ga)$ is contained in $\OR v$.  Hence, since $v$ crosses $h$, we have $ga\subset \OR v$.  

Since $g$ stabilises $\OL h$, we get 
$ga\subset \OL h\cap \OR v$, as required.  Moreover, $\dist_{\contact X}(ga,h)=\dist_{\contact X}(ga,gh)=\dist_{\contact X}(a,h)>2$, so taking $a'=ga$ completes the proof.  
\end{proof}

Now we can prove the proposition.

\begin{proof}[Proof of Proposition~\ref{propi:SSL}]
The proof is summarised in Figure~\ref{fig:prop1}.

Since $X$ is irreducible, Corollary~\ref{cor:rank_rigidity} implies that $\contact X$ is unbounded.  Hence 
there exists a hyperplane $a$ such that $\dist_{\contact X}(a,h)>2$.  Since $\dist_{\contact X}(h,v)=1$, we have $\dist_{\contact X}(a,v)>1$.  So, up to relabelling halfspaces, we have $a\subset \OL 
h\cap \OL v$.  Applying Lemma~\ref{lem:sector_hop}, we obtain a hyperplane $a'$ such that $\dist_{\contact X}(h,a')>2$ and $a'\subset\OL h\cap\OR v$.  

\begin{figure}[h]
 \begin{overpic}[width=0.5\textwidth]{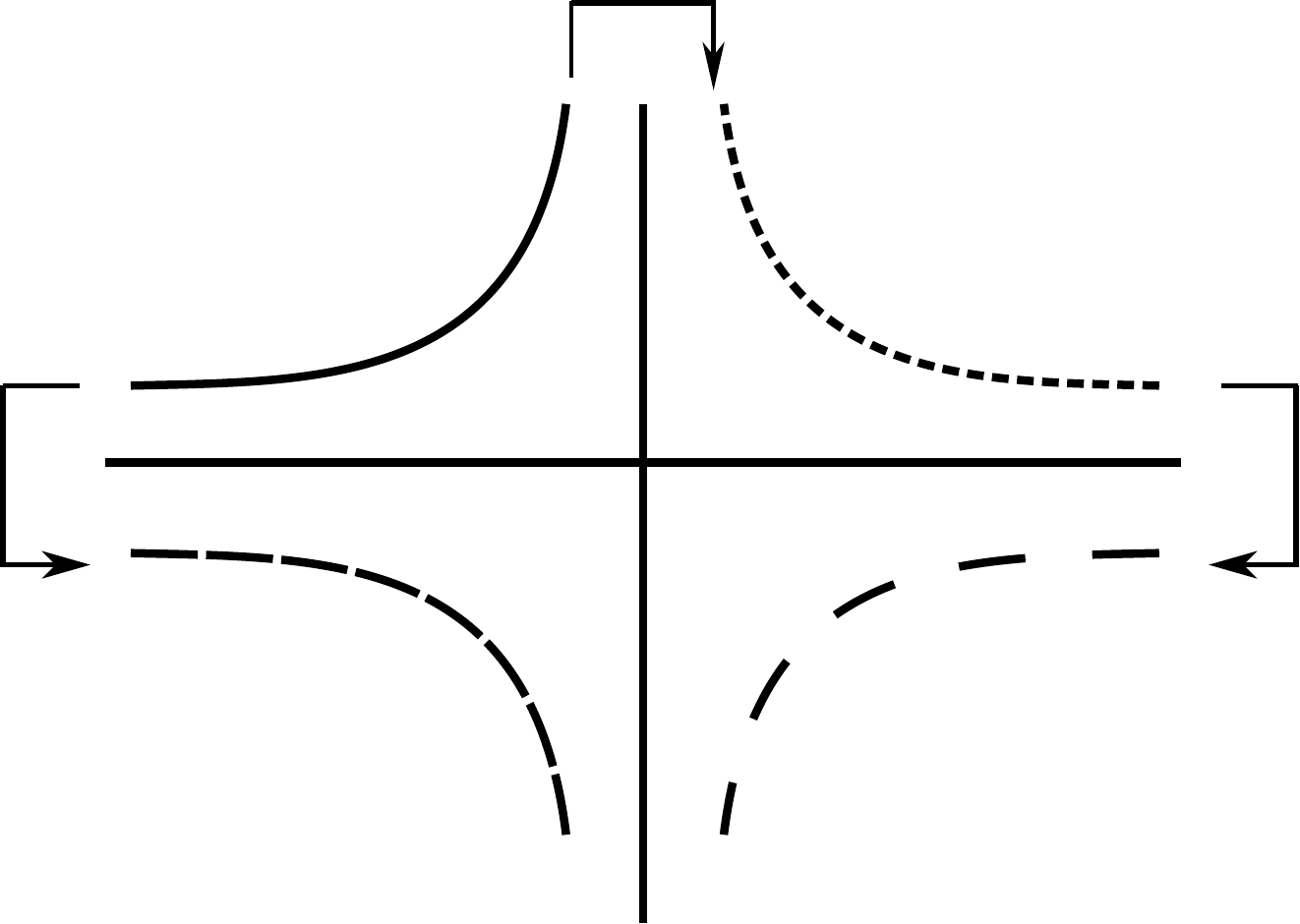}
  \put(30,45){$a$}
  \put(30,20){$a'$}
  \put(58,60){$b$}
  \put(58,10){$b'$}
  \put(45,25){$h$}
  \put(92,34){$v$}
  \put(-3,33){$\mathbf 1$}
 \put(50,72){$\mathbf 2$}
 \put(102,33){$\mathbf 3$}
 \end{overpic}
\caption{Proof of Proposition~\ref{propi:SSL}. Start with the hyperplane $a$.  The arrow labelled $\mathbf 1$ shows the first application of   
Lemma~\ref{lem:sector_hop} to $a$. The arrow labelled $\mathbf 2$ shows the application of Corollary~\ref{cor:strong_separation} to the strongly separated pair $a,h$. The arrow labelled $\mathbf 3$ 
shows the application of Lemma~\ref{lem:sector_hop} to the hyperplane $b$.  The hyperplane $b$ provided by Corollary~\ref{cor:strong_separation} could also have been in the bottom right sector, in 
which case we would use Lemma~\ref{lem:sector_hop} to move $b$ ``up'' into the top right.}\label{fig:prop1}
\end{figure}

Apply Corollary~\ref{cor:strong_separation} to the strongly separated pair $a, h$ to obtain a hyperplane $b\subset \OR h$ with $\dist_{\contact X}(b,h)>2$ (the hyperplane $b$ arises as a translate 
of $a$ by some element acting loxodromically on $\contact X$ and double-skewering $a$ and $h$).  Again, $\dist_{\contact X}(b,v)>1$, so 
$b\subset \OL v$ or $b\subset\OR v$.  Suppose the former holds, i.e. $b\subset \OR h\cap \OL v$.  Applying Lemma~\ref{lem:sector_hop} (with the roles of $\OL h$ and $\OR h$ switched), we 
obtain a hyperplane $b'\subset \OR h\cap \OR v$.  By an identical argument, if $b\subset \OR h\cap\OR v$, we obtain a hyperplane $b'\subset\OR h\cap \OL v$.  We have shown that each of the four 
intersections $\OL h\cap\OL v,\OL h\cap\OR v,\OR h\cap\OL v,\OR h\cap\OR 
v$ contains a hyperplane.  One of these four intersections is $h^+\cap v^+$, so we are done.  
\end{proof}

\section{Questions}\label{sec:questions}
We close with some questions about using Proposition~\ref{prop:ramsey_dilworth} to 
effectivise statements about actions on CAT(0) cube complexes.

\begin{question}[Effective double skewering]\label{question:EDS}
 Let the group $G$ act cocompactly and essentially on the finite-dimensional, locally finite CAT(0) cube 
complex $X$.  Find an explicit estimate of the function $f:\naturals\to\naturals$ such that the following holds: 
let $v,h$ be disjoint hyperplanes and let $L=\dist(\neb(v),\neb(h))$.  Then there exists $g\in G$ such that 
$v$ separates $h$ from $gh$ and $g$ has combinatorial translation length at most $f(L)$.

The function $f$ should be allowed to depend on invariants of $X$ like its dimension and the maximum degree of 
$0$--cubes.  It also seems reasonable (and necessary) to allow $f$ to depend on the number of orbits of 
hyperplanes, or the diameter of a smallest compact convex subcomplex whose $G$--orbit covers $X$, or some 
other parameter of the action. 
\end{question}

Next, recall that, in the proof of Corollary~\ref{cor:linear_growth}, when $X$ is finite-dimensional, we found a constant $K$, depending on the dimension of $X$, 
such that for all $R\ge0$, either the set $\mathcal H_R$ of hyperplanes crossing the $R$--ball about a fixed 
basepoint contains a facing $4$--tuple, or $|\mathcal H_R|\leq KR$.

So, if one knew the growth rate of the function $R\mapsto|\mathcal H_R|$, and this growth rate was 
superlinear, one could compute a minimal $R_0$ such that $\mathcal H_{R_0}$ contains a facing $4$--tuple. In 
conjunction with an answer to Question~\ref{question:EDS}, the proof of Proposition~\ref{prop:tits_1} would 
then yield $g,h\in G$, each with translation length at most $f(2R_0)$, such that $g,h$ freely generate a free 
subgroup of $G$.  So:

\begin{question}[Effective Tits alternative 1]\label{question:ETA1}
 Let $X$ be a locally finite, finite-dimensional CAT(0) cube complex on which the group $G$ acts cocompactly and essentially.  
Find a constant $L_0$ such that either $R\mapsto|\mathcal H_R|$ grows 
at most linearly, or $G$ contains a free group generated by elements $g,h$ whose combinatorial translation 
lengths are at most $L_0$.  The constant $L_0$ should depend on specific parameters of the $G$--action in an 
explicit way, as in Question~\ref{question:EDS}.
\end{question}

\begin{rem}
The proof of the Tits alternative in~\cite{CapraceSageev} also involves an application of the Double Skewering 
Lemma to two pairs of hyperplanes drawn from a facing $4$--tuple.  In that setting, the facing $4$--tuple is 
found by applying the \emph{Flipping Lemma} to a facing triple, and then concluding that, if $X$ has no facing 
triple, then the $G$--action fixes a point at infinity.  So, it would also be interesting to try to 
effectivise the Flipping Lemma (of which the Double Skewering Lemma is an easy consequence).
\end{rem}

Our proof of Proposition~\ref{propi:SSL} also yields a facing $4$--tuple of hyperplanes $a,b,c,d$ that are 
pairwise strongly separated (because they are pairwise at large distance in $\contact X$, by construction).  
Applying the Double Skewering Lemma as in the proof of Proposition~\ref{prop:tits_1} would then yield a free 
subgroup of $G$ generated by two elements acting on $\contact X$ loxodromically, in view of 
Corollary~\ref{cor:strong_separation}.  

\begin{question}[Effective rank rigidity]\label{question:ERR}
Is there an effective version of the Rank-Rigidity Theorem for cocompact actions on CAT(0) cube complexes?  
Specifically: let $X$ be a finite-dimensional, essential, cocompact, locally finite CAT(0) cube complex, let 
$x_0\in X^{(0)}$ be a base vertex.  For $R\ge 0$, let $B_R(x_0)$ and $\mathcal H_R$ be defined as above.  Let 
$vol(R)$ be the number of $0$--cubes in $B_R(x_0)$ and let $Hvol(R)=|\mathcal H_R|$.  Can one characterise, in 
terms of the growth rates of $vol(R)$ and $Hvol(R)$, when $X$ splits as a nontrivial product?  If $X$ does not 
split as a nontrivial product, can one estimate the value $R_0$, depending on $vol(R)$ and 
$Hvol(R)$, such that $\mathcal H_{R_0}$ contains a strongly separated pair of hyperplanes?

Given a cocompact action of $G$ on $X$, one could then combine this with an answer to 
Question~\ref{question:EDS} and Corollary~\ref{cor:strong_separation} to produce a rank-one element of bounded 
translation length.
\end{question}

One can imagine a more complicated version of Question~\ref{question:ERR} about facing $4$--tuples of strongly 
separated hyperplanes, and a free subgroup of $G$ acting purely loxodromically on $\contact X$, generated by 
two elements of bounded translation length on $X$.  One can also imagine a version about the translation 
lengths on $\contact X$, rather than on $X$.  In fact:

\begin{question}\label{question:CXtrans}
 Let $X$ be a finite-dimensional, locally finite, irreducible, essential CAT(0) cube complex with a group $G$ 
acting cocompactly.  Estimate the minimal translation length on $\contact X$ of elements of $G$ acting 
loxodromically.  
\end{question}

We finally ask whether this is related to \emph{uniform exponential growth} for cubulated groups.  Here the 
question boils down to: let the finitely generated group $G$ act properly and cocompactly on the CAT(0) cube 
complex $X$.  Find a constant $\lambda$ such that either $G$ is virtually abelian or, for any finite 
generating set of $G$, the $\lambda$--ball in the corresponding Cayley graph of $G$ contains two elements that 
freely generate a free (semi)group.  There is quite a strong result about this in the $2$--dimensional case, 
due to Kar and Sageev~\cite{KarSageev:UEG}, and this is an actively-studied question in higher dimensions.

\bibliographystyle{alpha}
\bibliography{sector_lemma}

\newcommand{\etalchar}[1]{$^{#1}$}
\begin{thebibliography}{AKWW13}

\bibitem[ACGH17]{ACGH}
Goulnara~N. Arzhantseva, Christopher~H. Cashen, Dominik Gruber, and David Hume.
\newblock Characterizations of {M}orse quasi-geodesics via superlinear
  divergence and sublinear contraction.
\newblock {\em Doc. Math.}, 22:1193--1224, 2017.

\bibitem[AGM13]{agol2013virtual}
Ian Agol, Daniel Groves, and Jason Manning.
\newblock The virtual {H}aken conjecture.
\newblock {\em Doc. Math}, 18:1045--1087, 2013.

\bibitem[AKWW13]{AlgomKfir}
Yael Algom-Kfir, Bronislaw Wajnryb, and Pawel Witowicz.
\newblock A parabolic action on a proper, {CAT}(0) cube complex.
\newblock {\em J. Group Theory}, 16(6):965--984, 2013.

\bibitem[AOS12]{AOS}
Federico Ardila, Megan Owen, and Seth Sullivant.
\newblock Geodesics in {$\rm CAT(0)$} cubical complexes.
\newblock {\em Adv. in Appl. Math.}, 48(1):142--163, 2012.

\bibitem[Ava61]{avann1961median}
Sherwin~P. Avann.
\newblock Median algebras.
\newblock {\em Proceedings of the American Mathematical Society}, 12:407--414,
  1961.

\bibitem[BC93]{barthelemy1993median}
Jean-Pierre Barth{\'e}lemy and Julien Constantin.
\newblock Median graphs, parallelism and posets.
\newblock {\em Discrete mathematics}, 111(1-3):49--63, 1993.

\bibitem[BC08]{bandelt2008metric}
Hans-Jurgen Bandelt and Victor Chepoi.
\newblock Metric graph theory and geometry: a survey.
\newblock {\em Contemporary Mathematics}, 453:49--86, 2008.

\bibitem[BC12]{BehrstockCharney}
Jason Behrstock and Ruth Charney.
\newblock Divergence and quasimorphisms of right-angled {A}rtin groups.
\newblock {\em Math. Ann.}, 352(2):339--356, 2012.

\bibitem[BCG{\etalchar{+}}09]{BCGNW}
Jacek Brodzki, Sarah~J. Campbell, Erik Guentner, Graham~A. Niblo, and Nick~J.
  Wright.
\newblock Property {A} and {$\rm CAT(0)$} cube complexes.
\newblock {\em J. Funct. Anal.}, 256(5):1408--1431, 2009.

\bibitem[BF09]{BestvinaFujiwara}
Mladen Bestvina and Koji Fujiwara.
\newblock A characterization of higher rank symmetric spaces via bounded
  cohomology.
\newblock {\em Geom. Funct. Anal.}, 19(1):11--40, 2009.

\bibitem[BF18]{BeyrerFioravanti:hyperbolic}
Jonas Beyrer and Elia Fioravanti.
\newblock Cross ratios and cubulations of hyperbolic groups.
\newblock {\em arXiv preprint arXiv:1810.08087}, 2018.

\bibitem[BF19]{BeyrerFioravanti:marked}
Jonas Beyrer and Elia Fioravanti.
\newblock Cross ratios on {CAT}(0) cube complexes and marked length-spectrum
  rigidity.
\newblock {\em arXiv preprint arXiv:1903.02447}, 2019.

\bibitem[BH99]{BH}
Martin~R. Bridson and Andr\'{e} Haefliger.
\newblock {\em Metric spaces of non-positive curvature}, volume 319 of {\em
  Grundlehren der Mathematischen Wissenschaften [Fundamental Principles of
  Mathematical Sciences]}.
\newblock Springer-Verlag, Berlin, 1999.

\bibitem[BHS17a]{BHS:HHS_I}
Jason Behrstock, Mark~F. Hagen, and Alessandro Sisto.
\newblock Hierarchically hyperbolic spaces, {I}: {C}urve complexes for cubical
  groups.
\newblock {\em Geom. Topol.}, 21(3):1731--1804, 2017.

\bibitem[BHS17b]{behrstock2017quasiflats}
Jason Behrstock, Mark~F Hagen, and Alessandro Sisto.
\newblock Quasiflats in hierarchically hyperbolic spaces.
\newblock {\em arXiv preprint arXiv:1704.04271}, 2017.

\bibitem[Bow13]{bowditch2013coarse}
Brian~H. Bowditch.
\newblock Coarse median spaces and groups.
\newblock {\em Pacific Journal of Mathematics}, 261(1):53--93, 2013.

\bibitem[Bow18]{bowditch2018convex}
Brian~H. Bowditch.
\newblock Convex hulls in coarse median spaces.
\newblock {\em Preprint}, 2018.

\bibitem[Bow19]{bowditch2019quasiflats}
Brian~H. Bowditch.
\newblock Quasiflats in coarse median spaces, 2019.

\bibitem[Bre17]{Bregman}
Corey Bregman.
\newblock Isometry groups of {CAT}(0) cube complexes.
\newblock {\em arXiv preprint arXiv:1712.04805}, 2017.

\bibitem[Bri91]{Bridson:thesis}
Martin~R. Bridson.
\newblock Geodesics and curvature in metric simplicial complexes.
\newblock In {\em Group theory from a geometrical viewpoint ({T}rieste, 1990)},
  pages 373--463. World Sci. Publ., River Edge, NJ, 1991.

\bibitem[BW12]{BergeronWise}
Nicolas Bergeron and Daniel~T. Wise.
\newblock A boundary criterion for cubulation.
\newblock {\em Amer. J. Math.}, 134(3):843--859, 2012.

\bibitem[CC19]{ChepoiChalopin}
J{\'e}r{\'e}mie Chalopin and Victor Chepoi.
\newblock 1-safe petri nets and special cube complexes: equivalence and
  applications.
\newblock {\em ACM Transactions on Computational Logic (TOCL)}, 20(3):1--49,
  2019.

\bibitem[CFI16]{CFI}
Indira Chatterji, Talia Fern\'{o}s, and Alessandra Iozzi.
\newblock The median class and superrigidity of actions on {$\rm CAT(0)$} cube
  complexes.
\newblock {\em J. Topol.}, 9(2):349--400, 2016.
\newblock With an appendix by Pierre-Emmanuel Caprace.

\bibitem[CH17]{CordesHume}
Matthew Cordes and David Hume.
\newblock Stability and the {M}orse boundary.
\newblock {\em J. Lond. Math. Soc. (2)}, 95(3):963--988, 2017.

\bibitem[Che00]{Chepoi}
Victor Chepoi.
\newblock Graphs of some {${\rm CAT}(0)$} complexes.
\newblock {\em Adv. in Appl. Math.}, 24(2):125--179, 2000.

\bibitem[CM19]{ChatterjiMartin}
Indira Chatterji and Alexandre Martin.
\newblock A note on the acylindrical hyperbolicity of groups acting on {${\rm
  CAT}(0)$} cube complexes.
\newblock In {\em Beyond hyperbolicity}, volume 454 of {\em London Math. Soc.
  Lecture Note Ser.}, pages 160--178. Cambridge Univ. Press, Cambridge, 2019.

\bibitem[CN05]{ChatterjiNiblo}
Indira Chatterji and Graham Niblo.
\newblock From wall spaces to {$\rm CAT(0)$} cube complexes.
\newblock {\em Internat. J. Algebra Comput.}, 15(5-6):875--885, 2005.

\bibitem[CS11]{CapraceSageev}
Pierre-Emmanuel Caprace and Michah Sageev.
\newblock Rank rigidity for {CAT}(0) cube complexes.
\newblock {\em Geom. Funct. Anal.}, 21(4):851--891, 2011.

\bibitem[CS15]{CharneySultan}
Ruth Charney and Harold Sultan.
\newblock Contracting boundaries of {CAT}(0) spaces.
\newblock {\em Journal of Topology}, 8(1):93--117, 2015.

\bibitem[Dil50]{Dilworth}
Robert~P. Dilworth.
\newblock A decomposition theorem for partially ordered sets.
\newblock {\em Ann. of Math. (2)}, 51:161--166, 1950.

\bibitem[FH19]{FioravantiHagen}
Elia Fioravanti and Mark Hagen.
\newblock Deforming cubulations of hyperbolic groups.
\newblock {\em arXiv preprint arXiv:1912.10999}, 2019.

\bibitem[Fio17]{Fioravanti:dilworth}
Elia Fioravanti.
\newblock Roller boundaries for median spaces and algebras.
\newblock {\em arXiv preprint arXiv:1708.01005}, 2017.

\bibitem[Fio20]{Fioravanti:contact}
Elia Fioravanti.
\newblock Automorphisms of contact graphs of {CAT}(0) cube complexes.
\newblock {\em arXiv preprint arXiv:2001.08493}, 2020.

\bibitem[Gen16]{Genevois:acylindrical}
Anthony Genevois.
\newblock Acylindrical action on the hyperplanes of a {CAT}(0) cube complex.
\newblock {\em arXiv preprint arXiv:1610.08759}, 2016.

\bibitem[Gen19a]{Genevois:NYJM}
Anthony Genevois.
\newblock Acylindrical hyperbolicity from actions on {CAT}(0) cube complexes: a
  few criteria.
\newblock {\em New York J. Math}, 25:1214--1239, 2019.

\bibitem[Gen19b]{Genevois:axis}
Anthony Genevois.
\newblock A cubical flat torus theorem and some of its applications.
\newblock {\em arXiv preprint arXiv:1902.04883}, 2019.

\bibitem[Gen19c]{Genevois:rank_one}
Anthony Genevois.
\newblock Rank-one isometries of {CAT}(0) cube complexes and their
  centralisers.
\newblock {\em arXiv preprint arXiv:1905.00735}, 2019.

\bibitem[Gen20]{Genevois:diagram}
Anthony Genevois.
\newblock Contracting isometries of {CAT}(0) cube complexes and acylindrical
  hyperbolicity of diagram groups.
\newblock {\em Algebraic \& Geometric Topology}, 20(1):49--134, 2020.

\bibitem[Gro81]{Gromov:polynomial}
Mikhael Gromov.
\newblock Groups of polynomial growth and expanding maps.
\newblock {\em Inst. Hautes \'{E}tudes Sci. Publ. Math.}, (53):53--73, 1981.

\bibitem[Gro87]{Gromov:essay}
Mikhail Gromov.
\newblock Hyperbolic groups.
\newblock In {\em Essays in group theory}, volume~8 of {\em Math. Sci. Res.
  Inst. Publ.}, pages 75--263. Springer, New York, 1987.

\bibitem[Hag07]{HaglundSemisimple}
Fr{\'e}d{\'e}ric Haglund.
\newblock Isometries of {CAT}(0) cube complexes are semi-simple.
\newblock {\em arXiv preprint arXiv:0705.3386}, 2007.

\bibitem[Hag08]{Haglund:graph_product}
Fr\'{e}d\'{e}ric Haglund.
\newblock Finite index subgroups of graph products.
\newblock {\em Geom. Dedicata}, 135:167--209, 2008.

\bibitem[Hag13]{Hagen:boundary}
Mark~F. Hagen.
\newblock The simplicial boundary of a {CAT}(0) cube complex.
\newblock {\em Algebr. Geom. Topol.}, 13(3):1299--1367, 2013.

\bibitem[Hag14]{Hagen:contact}
Mark~F. Hagen.
\newblock Weak hyperbolicity of cube complexes and quasi-arboreal groups.
\newblock {\em J. Topol.}, 7(2):385--418, 2014.

\bibitem[HK18]{huang2018groups}
Jingyin Huang and Bruce Kleiner.
\newblock Groups quasi-isometric to right-angled {A}rtin groups.
\newblock {\em Duke Mathematical Journal}, 167(3):537--602, 2018.

\bibitem[HP15]{HagenPrzytycki}
Mark~F. Hagen and Piotr Przytycki.
\newblock Cocompactly cubulated graph manifolds.
\newblock {\em Israel J. Math.}, 207(1):377--394, 2015.

\bibitem[HT19]{HagenTouikan}
Mark~F. Hagen and Nicholas W.~M. Touikan.
\newblock Panel collapse and its applications.
\newblock {\em Groups Geom. Dyn.}, 13(4):1285--1334, 2019.

\bibitem[Hua17]{huang2017top}
Jingyin Huang.
\newblock Top-dimensional quasiflats in cat (0) cube complexes.
\newblock {\em Geometry \& Topology}, 21(4):2281--2352, 2017.

\bibitem[HW08]{haglund2008special}
Fr{\'e}d{\'e}ric Haglund and Daniel~T Wise.
\newblock Special cube complexes.
\newblock {\em Geometric and Functional Analysis}, 17(5):1551--1620, 2008.

\bibitem[HW14]{HruskaWise}
G.~Christopher Hruska and Daniel~T. Wise.
\newblock Finiteness properties of cubulated groups.
\newblock {\em Compos. Math.}, 150(3):453--506, 2014.

\bibitem[HW20]{HagenWilton}
Mark Hagen and Henry Wilton.
\newblock Guirardel cores for cubical actions.
\newblock {\em In preparation}, 2020.

\bibitem[K\"50]{Konig}
D\'{e}nes K\"{o}nig.
\newblock {\em Theorie der endlichen und unendlichen {G}raphen.
  {K}ombinatorische {T}opologie der {S}treckenkomplexe}.
\newblock Chelsea Publishing Co., New York, N. Y., 1950.

\bibitem[KM12]{KahnMarkovic}
Jeremy Kahn and Vladimir Markovic.
\newblock Immersing almost geodesic surfaces in a closed hyperbolic three
  manifold.
\newblock {\em Ann. of Math. (2)}, 175(3):1127--1190, 2012.

\bibitem[KS19]{KarSageev:UEG}
Aditi Kar and Michah Sageev.
\newblock Uniform exponential growth for {CAT}(0) square complexes.
\newblock {\em Algebr. Geom. Topol.}, 19(3):1229--1245, 2019.

\bibitem[Lea13]{Leary}
Ian~J. Leary.
\newblock A metric {K}an-{T}hurston theorem.
\newblock {\em J. Topol.}, 6(1):251--284, 2013.

\bibitem[Lev18]{Levcovitz}
Ivan Levcovitz.
\newblock Divergence of {$\rm CAT(0)$} cube complexes and {C}oxeter groups.
\newblock {\em Algebr. Geom. Topol.}, 18(3):1633--1673, 2018.

\bibitem[Mie14]{Miesch:injective}
Benjamin Miesch.
\newblock Injective metrics on cube complexes.
\newblock {\em arXiv preprint arXiv:1411.7234}, 2014.

\bibitem[MM00]{MMII}
Howard~A. Masur and Yair~N. Minsky.
\newblock Geometry of the complex of curves. {II}. {H}ierarchical structure.
\newblock {\em Geom. Funct. Anal.}, 10(4):902--974, 2000.

\bibitem[Mor24]{Morse}
Harold~Marston Morse.
\newblock A fundamental class of geodesics on any closed surface of genus
  greater than one.
\newblock {\em Trans. Amer. Math. Soc.}, 26(1):25--60, 1924.

\bibitem[Nic04]{Nica}
Bogdan Nica.
\newblock Cubulating spaces with walls.
\newblock {\em Algebr. Geom. Topol.}, 4:297--309, 2004.

\bibitem[NPW81]{nielsen1981petri}
Mogens Nielsen, Gordon Plotkin, and Glynn Winskel.
\newblock Petri nets, event structures and domains, part i.
\newblock {\em Theoretical Computer Science}, 13(1):85--108, 1981.

\bibitem[NS13]{NevoSageev}
Amos Nevo and Michah Sageev.
\newblock The {P}oisson boundary of {${\rm CAT}(0)$} cube complex groups.
\newblock {\em Groups Geom. Dyn.}, 7(3):653--695, 2013.

\bibitem[QRT19]{Qing}
Yulan Qing, Kasra Rafi, and Giulio Tiozzo.
\newblock Sublinearly morse boundary {I}: Cat (0) spaces.
\newblock {\em arXiv preprint arXiv:1909.02096}, 2019.

\bibitem[Ram29]{Ramsey}
Frank~P. Ramsey.
\newblock On a {P}roblem of {F}ormal {L}ogic.
\newblock {\em Proc. London Math. Soc. (2)}, 30(4):264--286, 1929.

\bibitem[Rol98]{Roller}
Martin Roller.
\newblock Poc sets, median algebras and group actions.
\newblock {\em arXiv preprint arXiv:1607.07747}, 1998.

\bibitem[Sag95]{Sageev95}
Michah Sageev.
\newblock Ends of group pairs and non-positively curved cube complexes.
\newblock {\em Proc. London Math. Soc. (3)}, 71(3):585--617, 1995.

\bibitem[Sag97]{sageev1997codimension}
Michah Sageev.
\newblock Codimension-1 subgroups and splittings of groups.
\newblock {\em Journal of Algebra}, 189(2):377--389, 1997.

\bibitem[Sag14]{Sageev:book}
Michah Sageev.
\newblock {$\rm CAT(0)$} cube complexes and groups.
\newblock In {\em Geometric group theory}, volume~21 of {\em IAS/Park City
  Math. Ser.}, pages 7--54. Amer. Math. Soc., Providence, RI, 2014.

\bibitem[Sul14]{Sultan}
Harold Sultan.
\newblock Hyperbolic quasi-geodesics in {CAT}(0) spaces.
\newblock {\em Geom. Dedicata}, 169:209--224, 2014.

\bibitem[SW05]{SageevWise:tits}
Michah Sageev and Daniel~T. Wise.
\newblock The {T}its alternative for {${\rm CAT}(0)$} cubical complexes.
\newblock {\em Bull. London Math. Soc.}, 37(5):706--710, 2005.

\bibitem[Wis12]{Wise:book}
Daniel~T. Wise.
\newblock {\em From riches to {raag}s: 3-manifolds, right-angled {A}rtin
  groups, and cubical geometry}, volume 117 of {\em CBMS Regional Conference
  Series in Mathematics}.
\newblock Published for the Conference Board of the Mathematical Sciences,
  Washington, DC; by the American Mathematical Society, Providence, RI, 2012.

\bibitem[Wis20]{Wise:pave}
Daniel~T Wise.
\newblock The structure of groups with a quasiconvex hierarchy.
\newblock {\em Preprint}, 2020.

\bibitem[Woo17]{Woodhouse:axis}
Daniel~J. Woodhouse.
\newblock A generalized axis theorem for cube complexes.
\newblock {\em Algebr. Geom. Topol.}, 17(5):2737--2751, 2017.

\bibitem[Wri12]{wright2012finite}
Nick Wright.
\newblock Finite asymptotic dimension for {CAT}(0) cube complexes.
\newblock {\em Geometry \& Topology}, 16(1):527--554, 2012.

\bibitem[WW17]{WoodhouseWise}
Daniel~T. Wise and Daniel~J. Woodhouse.
\newblock A cubical flat torus theorem and the bounded packing property.
\newblock {\em Israel J. Math.}, 217(1):263--281, 2017.

\end{thebibliography}
\end{document}